\theoremstyle{plain}
\numberwithin{equation}{section}
\newtheorem{theorem}[equation]{Theorem}
\newtheorem{lemma}[equation]{Lemma}
\newtheorem{proposition}[equation]{Proposition}
\newtheorem{corollary}[equation]{Corollary}
\theoremstyle{definition}
\newtheorem{definition}[equation]{Definition}
\newtheorem{example}[equation]{Example}
\newtheorem{remark}[equation]{Remark}
\newtheorem{question}[equation]{Question}
\newcommand{\assign}{:=}
\DeclareMathOperator{\LND}{LND}
\DeclareMathOperator{\End}{End}
\DeclareMathOperator{\tr}{tr}
\DeclareMathOperator{\Aut}{Aut}
\DeclareMathOperator{\gr}{gr}
\newcommand{\VA}{V_{n}({\mathcal A})}
\newcommand{\VQA}{V_{n}({\bf q},{\mathcal A})}
\newcommand{\betaAut}{\beta}
\newcommand{\rotsym}[1]{%
    \rotatebox{#1}{$\subseteq$}%
}
\newcommand{\rotsymp}[1]{%
    \rotatebox{#1}{$\supseteq$}%
}
\begin{document}

\title{Discriminant formulas and applications}

\author{K. Chan, A.A. Young, and J.J. Zhang}

\address{(Chan) Department of Mathematics, Box 354350,
University of Washington, Seattle, Washington 98195, USA}

\email{kenhchan@math.washington.edu, ken.h.chan@gmail.com}

\address{(Young) Department of Mathematics,
DigiPen Institute of Technology, Redmond, WA 98052, USA}

\email{young.mathematics@gmail.com}

\address{(Zhang) Department of Mathematics, Box 354350,
University of Washington, Seattle, Washington 98195, USA}

\email{zhang@math.washington.edu}

\begin{abstract}
We solve two conjectures of Ceken-Palmieri-Wang-Zhang 
concerning discriminants and give some applications.
\end{abstract}

\subjclass[2000]{Primary 16W20}


\keywords{discriminant, automorphism group, cancellation problem,
quantum algebra, Clifford algebra}


\maketitle


\section*{Introduction}
\label{xxsec0}

In algebraic number theory, the discriminant takes on a familiar form: 
given a Galois extension $L$ of the field ${\mathbb Q}$ and write
${\mathcal O}_{L}={\mathbb Z}[\alpha]\cong {\mathbb Z}[x]/(f)$ where
$f$ is the minimal polynomial (or the characteristic polynomial) of 
$\alpha$, then  we have 
$$
\Delta_{L/{\mathbb Q}}=\prod_{i \neq j} ( r_{i} - r_{j} )$$ 
where $r_{1} , \ldots , r_{n}$ are the roots of $f$. In noncommutative 
algebra, the discriminant has long been used to study orders and lattices in a
central simple algebra \cite{Re}. Recently, it has been shown that the
discriminant plays a remarkable role in solving some classical 
and notoriously difficult questions:
\begin{enumerate}
\item[(1)]
{\bf Automorphism problem}, determining the full automorphism groups 
of noncommutative Artin-Schelter regular algebras \cite{CPWZ1, CPWZ2}. 
\item[(2)]
{\bf Zariski cancellation problem}, concerning the cancellative property
of noncommutative algebras such as skew polynomial rings \cite{BZ}.
\item[(3)]
{\bf Isomorphism problem}, finding a criterion for when two algebras are 
isomorphic, within certain classes of noncommutative algebras 
\cite{CPWZ3}.
\end{enumerate}
Despite the usefulness of the discriminant in algebraic number theory, 
algebraic geometry and noncommutative algebra, it is extremely hard 
to compute, especially in high dimensional and high rank cases. In 
\cite{CPWZ1, CPWZ2}, the authors made two conjectures on discriminant 
formulas for some classes of noncommutative algebras. Our main aim is 
to prove these two conjectures.

Let $k$ be a base commutative domain and $k^\times$ be the set of invertible
elements in $k$. The discriminant of a noncommutative algebra $A$ over
a central subalgebra $Z\subseteq A$, denoted by $d(A/Z)$, will be 
reviewed in Section \ref{xxsec1}. Let $q\in k^\times$ be an invertible
element in $k$ and let $A_q$ be the $q$-quantum Weyl algebra 
generated by $x$ and $y$ subject to the relation $yx=q xy+1$. Our first 
result is

\begin{theorem}
\label{xxthm0.1}
Let $q$ be a primitive $n$-th root of unity for some $n\geq 2$. Then the
discriminant of $A_q$ over its center $Z(A_q)$ is 
$$d(A_q/Z(A_q)) =c \; (nm)^{n^2}((1-q)^n x^n y^n-1)^{n(n-1)}$$
where $c$ is some element in $k^\times$ and $m =
\prod_{i=2}^{n-1} (1 + q + \cdots + q^{i-1})$. By convention, $m=1$ when $n=2$.
\end{theorem}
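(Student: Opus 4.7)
The strategy centers on the normal element $t := 1 + (q-1)xy = yx - xy \in A_q$. A direct calculation shows $tx = qxt$ and $yt = qty$; in particular $t^n$ commutes with both $x$ and $y$ and hence lies in $Z(A_q)$. Expanding $t^n$ via the binomial theorem and reducing each $(xy)^k$ to PBW normal form using $yx = qxy + 1$ yields $t^n = \sum_{k=0}^{n} \gamma_k\,x^k y^k$ with $\gamma_0 = 1$ and $\gamma_n = (q-1)^n q^{\binom{n}{2}}$. Since $Z(A_q) = k[x^n, y^n]$ (standard for $A_q$ at a root of unity), centrality of $t^n$ forces $\gamma_k = 0$ for $1 \le k \le n-1$; using $q^{\binom{n}{2}}(q-1)^n = -(1-q)^n$ for any primitive $n$-th root of unity, one obtains the key identity
$$ t^n \;=\; 1 - (1-q)^n x^n y^n. $$

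The plan is then to compute the discriminant via the auxiliary set $f_{ij} := x^i t^j$ for $0 \le i,j \le n-1$. These are $Z(A_q)$-linearly independent but \emph{not} a $Z(A_q)$-basis of $A_q$. Letting $P$ denote the transition matrix from $\{f_{ij}\}$ to the PBW basis $\{x^a y^b : 0 \le a,b \le n-1\}$, and letting $T$, $T'$ denote the corresponding trace matrices, the identity $T' = PTP^T$ gives
$$ d(A_q/Z(A_q)) \;=\; \det T \;=\; \det(T')\big/(\det P)^2. $$
The virtue of this intermediate computation is twofold: first, $f_{ij}f_{kl} = q^{jk}\,x^{i+k}\,t^{j+l}$ by $tx=qxt$; second, in an $n$-dimensional irreducible representation extended from the localization $A_q[x^{-1}] \cong k\langle x^{\pm 1}, t\rangle/(tx - qxt)$ (in which $t$ acts diagonally with eigenvalues $\lambda_0, \lambda_0 q, \ldots, \lambda_0 q^{n-1}$), the reduced trace satisfies $\tr(x^a t^b) = 0$ unless both $n \mid a$ and $n \mid b$. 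Consequently $T'$ has a unique nonzero entry per row, and an explicit count produces
$$ \det T' \;=\; \pm\, n^{2n^2}\,(x^n)^{n(n-1)}\,(t^n)^{n(n-1)}\cdot u, \qquad u \in k^\times. $$

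Separately, the matrix $P$ decomposes as $n$ blocks with respect to the $\mathbb{Z}/n$-grading $\deg x = 1,\,\deg y = -1$, and each block is upper triangular with diagonal entries $(q-1)^j q^{\binom{j}{2}}$, picking up a factor of $x^n$ for each wrap-around of the exponent of $x$ beyond $n-1$. Totalling yields $(\det P)^2 = (x^n)^{n(n-1)}\cdot u_1$ for some $u_1 \in k^\times$. Dividing, the $(x^n)^{n(n-1)}$ factors cancel, leaving $\det T = c'\,n^{2n^2}(t^n)^{n(n-1)}$ for some $c' \in k^\times$. Finally, the classical identity $\prod_{i=1}^{n-1}(1 - q^i) = n$ gives $nm = n^2/(1-q)^{n-1}$, so $n^{2n^2} = (nm)^{n^2}(1-q)^{n^2(n-1)}$ up to a unit; combined with $(t^n)^{n(n-1)} = ((1-q)^n x^n y^n - 1)^{n(n-1)}$ (using that $n(n-1)$ is even), this recovers the formula of Theorem~\ref{xxthm0.1}.

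The main technical obstacle is establishing the closed form $t^n = 1 - (1-q)^n x^n y^n$. While the leading and constant terms are immediate, verifying that the intermediate coefficients $\gamma_k$ vanish for $1 \le k \le n-1$ amounts either to a nontrivial $q$-combinatorial identity (equivalent to $[n]_q = 0$ forcing mass cancellation) or to the centrality argument sketched above. Once the normal-element framework is in place, the remaining determinant arithmetic and the extraction of the $(nm)^{n^2}$ prefactor via $\prod_{i=1}^{n-1}(1-q^i) = n$ are routine.
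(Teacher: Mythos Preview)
Your approach is correct and genuinely different from the paper's, though the two are secretly built on the same normal element. The paper (Section~2) passes to the localization $A'' = A_q[(y^n)^{-1}]$ and introduces $x'' := (1-q)x - y^{-1}$, which satisfies $y x'' = q x'' y$; this turns $A''$ into a localized skew polynomial ring, so the known formula $d =_{\times} n^{2n^2}((x'')^n y^n)^{n(n-1)}$ applies, and the computation $(x'')^n y^n = (1-q)^n x^n y^n - 1$ finishes the job after a symmetry argument pins down the unit. Your element is the same one in disguise: $x'' y = -t$. But instead of localizing, you stay in $A_q$ and replace the PBW basis by the spanning set $\{x^i t^j\}$, computing both the transition determinant $\det P$ and the trace matrix $\det T'$ explicitly. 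This avoids the paper's localization/symmetry step at the cost of more bookkeeping; it is arguably more self-contained.

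Two places in your write-up need tightening. First, when you assert $(\det P)^2 = (x^n)^{n(n-1)} \cdot u_1$ with $u_1 \in k^\times$, note that the diagonal entries $(q-1)^j q^{\binom{j}{2}}$ contribute a factor $(q-1)^{n^2(n-1)}$ to $(\det P)^2$; this is \emph{not} a priori a unit in an arbitrary domain $k$. However, this factor is exactly what cancels against the $(1-q)^{n^2(n-1)}$ appearing in $n^{2n^2} = (nm)^{n^2}(1-q)^{n^2(n-1)}$ at the end, so the final constant really does land in $k^\times$ --- you should make this cancellation explicit rather than calling each intermediate constant a unit. Second, your justification of $\tr(x^a t^b)=0$ via an $n$-dimensional irreducible representation is fine generically but should be phrased carefully: the regular trace used to define the discriminant is $n$ times the reduced trace over the Azumaya locus, and the vanishing then extends to all of $Z$ since both sides lie in $Z$. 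Alternatively, one can argue directly: the $\mathbb{Z}/n$-grading with $\deg x = 1,\ \deg y = -1$ forces $\tr = 0$ unless $n \mid a$, and writing left multiplication by $t$ as $D + N$ (diagonal plus a shift $(l,m)\mapsto(l{+}1,m{+}1)$) shows $\tr(t^b) = \tr(D^b) = n\sum_l q^{lb} = 0$ for $0 < b < n$, with $t^n$ central handling $b \ge n$.
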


Theorem \ref{xxthm0.1} answers \cite[Conjecture 5.3]{CPWZ2} affirmatively. 

For $n\geq 2$, let $W_n$ be the $k$-algebra generated by
$x_1,\dots, x_n$ subject to the relations $x_i x_j+x_j x_i=1$ for all
$i\neq j$ \cite[Introduction]{CPWZ1}. This algebra is called a 
\emph{$(-1)$-quantum Weyl algebra} \cite[Introduction]{CPWZ3}. Let 
$$M:=\begin{pmatrix}
2x_1^2 & 1 & \cdots & 1\\
1 &2x_2^2& \cdots &  1\\
\vdots & \vdots &\cdots &\vdots\\
1& 1 & \cdots & 2x_n^2 \end{pmatrix}.$$
Let $Z$ denote the central subalgebra $k[x_1^2, \cdots, x_n^2] \subseteq 
W_n$. Our second result is

\begin{theorem}
\label{xxthm0.2}
Suppose $2$ is invertible in $k$. Then the 
discriminant of $W_n$ over the subalgebra $Z$ is 
$$d(W_n/Z)=c \; (\det M)^{2^{n-1}}$$
where $c$ is an element in $k^\times$.
\end{theorem}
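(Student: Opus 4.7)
The plan is to identify $W_n$ with a Clifford algebra over $Z$ and then apply general discriminant formulas for Clifford algebras.

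First I would verify that $Z$ is central in $W_n$: from $x_ix_j + x_jx_i = 1$ one obtains $x_i^2 x_j = x_i(1 - x_j x_i) = x_i - (1-x_jx_i)x_i = x_j x_i^2$, so each $x_i^2$ is central. Consequently, the assignment $v_i \mapsto x_i$ gives an isomorphism from the Clifford algebra $\mathrm{Cl}_Z(V, B)$ to $W_n$, where $V = \bigoplus_{i=1}^n Z v_i$ is a free $Z$-module equipped with the symmetric bilinear form $B$ whose Gram matrix is $M/2$ (i.e., $B(v_i,v_i) = x_i^2$ and $B(v_i,v_j) = 1/2$ for $i \neq j$). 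By the PBW theorem for Clifford algebras (valid when $2$ is invertible), $W_n$ is free over $Z$ of rank $2^n$ with basis $\{x_S := x_{i_1}\cdots x_{i_k} : S = \{i_1 < \cdots < i_k\} \subseteq \{1,\ldots,n\}\}$.

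Next, I would compute the discriminant in the orthogonal case. For a Clifford algebra $C$ over $R$ on generators $u_1,\ldots,u_n$ satisfying $u_i u_j = -u_ju_i$ ($i \neq j$) and $u_i^2 = a_i \in R$, a direct calculation gives $u_S u_T = \pm\, u_{S \triangle T}\prod_{i \in S\cap T}a_i$, so $\tr(u_Su_T) = 0$ for $S \neq T$, while $\tr(u_S^2) = 2^n(-1)^{|S|(|S|-1)/2}\prod_{i \in S}a_i$. The discriminant matrix is diagonal, yielding
\[
d(C/R) = \pm\,(2^n)^{2^n}\prod_{i=1}^n a_i^{2^{n-1}} = \pm\,(2^n)^{2^n}(\det\mathrm{diag}(a_1,\ldots,a_n))^{2^{n-1}}.
\]

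The technical heart is a change-of-basis lemma: if $v'_i = \sum_j P_{ij} v_j$ with $P \in \mathrm{GL}_n(R)$, then the induced change-of-basis matrix $Q$ on $\mathrm{Cl}_R(V,B)$ relating the bases $\{v'_S\}$ and $\{v_S\}$ satisfies $\det Q = (\det P)^{2^{n-1}}$. I would prove this via the PBW-style filtration $F_k = \sum_{|S| \leq k} R v_S$, whose associated graded is the exterior algebra $\Lambda V$. Ordering subsets by cardinality, $Q$ becomes block upper triangular, and its $k$-th diagonal block is the matrix of $\Lambda^k P$ on $\Lambda^k V$, with determinant $(\det P)^{\binom{n-1}{k-1}}$. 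Multiplying over $k = 0, 1, \ldots, n$ and using $\sum_k \binom{n-1}{k-1} = 2^{n-1}$ gives $\det Q = (\det P)^{2^{n-1}}$.

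Finally, to conclude, I would extend scalars to $K = \mathrm{Frac}(Z)$, where $B$ diagonalizes via some $P \in \mathrm{GL}_n(K)$ (possible since $2 \in k^\times$). Combining the two previous steps: in the diagonal basis over $K$, the discriminant is $\pm (2^n)^{2^n}(\det G')^{2^{n-1}}$ with $G' = PGP^T$; changing back via $Q$ divides by $(\det Q)^2 = (\det P)^{2^n}$, and using $\det G' = (\det P)^2\det G$ one obtains
\[
d(W_n\otimes_Z K/K) = \pm\,(2^n)^{2^n}(\det G)^{2^{n-1}} = \pm\,2^{n\cdot 2^{n-1}}(\det M)^{2^{n-1}}.
\]
Both sides lie in the domain $Z$ and agree in $K$, hence agree in $Z$, giving $c = \pm\,2^{n\cdot 2^{n-1}} \in k^\times$ (and matching the $n=2$ sanity check $c = -16$). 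The main obstacle is the change-of-basis lemma: while the filtration/exterior-algebra argument is conceptually clean, one must carefully check that the transition matrix is block upper triangular for the chosen ordering of subsets and that its diagonal blocks indeed act as $\Lambda^k P$ modulo lower filtration.
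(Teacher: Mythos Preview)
Your proof is correct and follows the same overall strategy as the paper---diagonalize the bilinear form and reduce to the diagonal Clifford case---but the execution is genuinely different and cleaner. The paper performs the diagonalization by explicit Gram--Schmidt substitutions $x_{i,s}$ (Lemmas~3.1 and~3.3), localizes at the successive squares $x_{s,s}^2$, and then needs two extra steps to pin down the unit: an intersection-of-localizations argument (Lemma~3.5) to force the unit into $Q(T)$, followed by a comparison with the associated graded ring (Lemma~1.5) to force it into $Z^\times$. Your change-of-basis lemma $\det Q=(\det P)^{2^{n-1}}$, proved via the filtration with $\gr\mathrm{Cl}\cong\Lambda V$ and the identity $\det(\Lambda^kP)=(\det P)^{\binom{n-1}{k-1}}$, replaces all of this: knowing exactly how the discriminant transforms under a $K$-linear change of generators, you compute over $K=\mathrm{Frac}(Z)$, obtain an exact equality there, and descend to $Z$ simply because both sides already lie in the domain $Z$. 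This bypasses Lemmas~3.5 and~1.5 entirely and in fact yields the explicit constant $c=\pm 2^{\,n\cdot 2^{n-1}}$ (the sign being $\prod_S(-1)^{|S|(|S|-1)/2}$), whereas the paper's argument only determines $c$ up to $T^\times$. The point you flag as the main obstacle---that $Q$ is block upper triangular with diagonal blocks $\Lambda^kP$---is indeed the only thing requiring care, and it follows directly from $v_iv_j+v_jv_i\in R$, so that the top-degree part of a product in $\mathrm{Cl}$ coincides with the exterior product.
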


Theorem \ref{xxthm0.2} answers \cite[Question 4.12(2)]{CPWZ1} affirmatively.

These results suggest that the discriminant has elegant
expressions in some situations. Because of its usefulness, more 
discriminant formulas should be established, see Example \ref{xxlem6.4}.

This paper contains other related results which we now describe.
Let $T$ be a commutative algebra over $k$ and let ${\bf q}:=\{q_{ij}\in T^\times 
\mid 1\leq i<j \leq n\}$ and ${\mathcal A}:=\{a_{ij}\in T\mid 1\leq i<j 
\leq n\}$ be sets of elements in $T$. The skew polynomial ring 
$T_{{\bf q}}[x_1,\cdots,x_n]$ is a $T$-algebra generated by $x_1,\cdots,x_n$ 
subject to the relations
\begin{equation}
\label{E0.2.1}\tag{E0.2.1}
x_j x_i=q_{ij}x_i x_j, \quad \forall \; 1\leq i<j\leq n.
\end{equation}
A generalized quantum Weyl algebra associated to $({\bf q},{\mathcal A})$ is a 
$T$-central filtered algebra of the form
\begin{equation}
\label{E0.2.2}\tag{E0.2.2}
\VQA \; = \; 
\frac{T \langle x_1, \ldots ,x_n\rangle}{(x_j x_i -q_{ij} x_i x_j -a_{i j} 
\mid i < j )}  
\end{equation}
such that the associated graded ring $\gr \VQA$ is naturally isomorphic 
to the skew polynomial ring $T_{\bf q}[x_1,\cdots,x_n]$. Another way of
constructing $\VQA$ is to use an iterated Ore extension starting with $T$.
To calculate the discriminant of $\VQA$ over its center, one needs to
determine the center of $\VQA$. The discriminant is defined whenever $\VQA$ 
is a finite module over a central subring $Z$ \cite{CPWZ2}, and it is 
most useful when $\VQA$ is a free module over $Z$ \cite{CPWZ1}. Since
$\gr \VQA \simeq T_{\bf q} [x_1, \cdots, x_n]$ we have that $\gr \VQA$
is a finite module over its center if and only if each $q_{i j}$ is a 
root of unity. Using this, we can show that the algebra $\VQA$ is a 
finite module over its center if and only if the parameters $q_{i j}$ 
are all non-trivial roots of unity. Also, when the center 
of $\VQA$ is a polynomial ring, $\VQA$ is a finitely generated free 
module over its center. The following useful result concerns the centers
of $\VQA$ and $T_{\bf q}[x_1,\cdots,x_n]$.

To state it, we need some notation. When $q_{ij}$ is a root of unity, 
there are two integers $k_{ij}$ and $d_{ij}$ such that 
$$q_{i j} = \exp ( 2 \pi\sqrt{-1}\; k_{i j} /d_{i j} ),$$
where $d_{i j}: =o ( q_{i j} ) <\infty$, $|k_{ij}|<d_{ij}$ and 
$( k_{ij} ,d_{i j} ) =1$. Further, we can choose that $k_{i j} =-k_{j i}$ 
since $q_{ji}=q_{ij}^{-1}$. Let $L_i$ be the $\mathrm{lcm} \{d_{i j} 
\mid j=1, \ldots ,n \}$. Let $\overline{Y}$ be the $n\times n$-matrix
$(k_{ij}L_i/d_{ij})_{n\times n}.$
For each prime $p$, define $\overline{Y}_p = \overline{Y} \otimes \mathbb{F}_p$. 
Let $m$ be any natural number. Let $I_{p,m}$ be the set containing $i$ 
such that $L_i\in p^{m}{\mathbb Z}-p^{m+1}{\mathbb Z}$. Finally
let $\overline{Y}_{p,m}$ be the submatrix of $\overline{Y}_{p}$ taken 
from the row and columns with indices $i\in I_{p,m}$.

\begin{theorem}
\label{xxthm0.3} 
Suppose $q_{ij}$ is a root of unity and not 1 for all $i<j$.
\begin{enumerate}
\item[(1)]
The center of $T_{\bf q}[x_1,\cdots,x_n]$ is a polynomial ring if and only if 
it is of the form $T[x_1^{L_1},\cdots, x_n^{L_n}]$ if and only if 
$\det(\overline{Y}_{p,m})\neq 0$ in ${\mathbb F}_p$ for all primes $p$ and all 
integers $m>0$ such that $I_{p,m} \ne \emptyset$.
\item[(2)]
If the center of $T_{\bf q}[x_1,\cdots,x_n]$ is the subalgebra 
$T[x_1^{L_1},\cdots,x_n^{L_n}]$, then the center of $\VQA$ is the 
subalgebra $T[x_1^{L_1},\cdots,x_n^{L_n}]$ and $\VQA$ is finitely
generated and free over $T[x_1^{L_1},\cdots,x_n^{L_n}]$.
\end{enumerate}
\end{theorem}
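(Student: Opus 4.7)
The plan is to reduce the center of $S := T_{\mathbf q}[x_1,\ldots,x_n]$ to linear algebra on the finite abelian group $G := \bigoplus_i \mathbb{Z}/L_i\mathbb{Z}$. Since the PBW monomials $x^{\mathbf a} = x_1^{a_1}\cdots x_n^{a_n}$ form a $T$-basis of $S$ and the skew-commutation relations give $x_i \cdot x^{\mathbf a} = \bigl(\prod_{j \neq i} q_{ji}^{a_j}\bigr) x^{\mathbf a} \cdot x_i$ (with $q_{ji} := q_{ij}^{-1}$), a linear-independence argument shows $Z(S)$ is the $T$-span of those $x^{\mathbf a}$ whose scalar factor equals $1$ for every $i$. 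Translating through $q_{ij} = \exp(2\pi\sqrt{-1}\, k_{ij}/d_{ij})$, centrality of $x^{\mathbf a}$ becomes $\overline{Y}\, \mathbf a \in \operatorname{diag}(L_1,\ldots,L_n)\,\mathbb{Z}^n$; since $d_{ij}\mid L_i$ for every $j$, the vectors $L_i e_i$ automatically satisfy this, giving $T[x_1^{L_1},\ldots,x_n^{L_n}] \subseteq Z(S)$ without hypothesis. The quotient of central exponents by $L\mathbb{Z}^n$ is precisely the kernel of the map $G \to G$ induced by $\overline{Y}$, so $Z(S) = T[x_1^{L_1},\ldots,x_n^{L_n}]$ iff this induced map is injective. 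The Chinese remainder theorem reduces the question prime-by-prime to the $p$-primary summands $G_p = \bigoplus_i \mathbb{Z}/p^{v_p(L_i)}\mathbb{Z}$.

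The crux is then a block-triangular analysis on each $G_p$. Using $\gcd(k_{ij},d_{ij}) = 1$ and $v_p(d_{ij}) \leq \min(v_p(L_i), v_p(L_j))$, one computes $v_p(k_{ij}L_i/d_{ij}) > 0$ whenever $v_p(L_i) > v_p(L_j)$, and when $v_p(L_i) = v_p(L_j) = m$ the entry $k_{ij}L_i/d_{ij}$ is a $p$-adic unit iff $v_p(d_{ij}) = m$. Since a homomorphism of finite abelian $p$-groups is injective iff its restriction to the $p$-torsion is, it suffices to examine the induced action on $G_p[p] \cong \bigoplus_{m \geq 1}\mathbb{F}_p^{|I_{p,m}|}$. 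Ordering indices by $v_p(L_i)$, this action turns out to be block triangular with diagonal blocks precisely $\overline{Y}_{p,m}$, yielding the equivalence with $\det(\overline{Y}_{p,m}) \neq 0$ in $\mathbb{F}_p$. The equivalence with ``$Z(S)$ is a polynomial ring'' comes from viewing $Z(S)$ as a $T$-algebra on the submonoid $M$ of central exponents in $\mathbb{Z}_{\geq 0}^n$: polynomiality over $T$ of transcendence degree $n$ forces $M = L\mathbb{Z}_{\geq 0}^n$, since any extra $\mathbf a \in M$ yields a relation $(x^{\mathbf a})^k \in T[x_1^{L_1},\ldots,x_n^{L_n}]$ (where $k$ is the order of $\mathbf a$ in $G$), incompatible with algebraic independence.

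\textbf{Part (2) and main obstacle.} Assume $Z(T_{\mathbf q}[x_1,\ldots,x_n]) = T[x_1^{L_1},\ldots,x_n^{L_n}]$. I first verify $x_i^{L_i}$ is central in $\VQA$: for $i < j$, induction on $x_j x_i = q_{ij} x_i x_j + a_{ij}$ yields
\[
x_j x_i^{L_i} = q_{ij}^{L_i}\, x_i^{L_i} x_j + [L_i]_{q_{ij}}\, a_{ij}\, x_i^{L_i-1},
\]
and both $q_{ij}^{L_i} = 1$ and $[L_i]_{q_{ij}} := 1 + q_{ij} + \cdots + q_{ij}^{L_i - 1} = 0$ since $d_{ij}\mid L_i$ and $q_{ij}$ is a nontrivial root of unity; the case $i > j$ is symmetric, so $T[x_1^{L_1},\ldots,x_n^{L_n}] \subseteq Z(\VQA)$. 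For the reverse inclusion, equip $\VQA$ with its standard filtration making $\gr \VQA \cong T_{\mathbf q}[x_1,\ldots,x_n]$: the induced filtration on the center gives $\gr Z(\VQA) \hookrightarrow Z(\gr \VQA) = T[x_1^{L_1},\ldots,x_n^{L_n}]$ by part (1), and together with the already-established inclusion this forces equality graded piece by piece. Freeness of $\VQA$ over $T[x_1^{L_1},\ldots,x_n^{L_n}]$ follows by uniquely writing every PBW monomial $x^{\mathbf b}$ as $\prod_i (x_i^{L_i})^{q_i}\, x^{\mathbf r}$ with $0 \leq r_i < L_i$, using centrality of the $x_i^{L_i}$. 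The main obstacle throughout is the $p$-adic valuation analysis in part (1)---carefully unpacking $\overline{Y}_{ij}$ modulo $p^{v_p(L_i)}$ to establish the block-triangular structure on $G_p[p]$; the remaining monomial manipulations and filtered-to-graded comparison are standard.
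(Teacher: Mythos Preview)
Your argument is essentially correct and follows the same overall strategy as the paper: reduce centrality of monomials to the kernel of the induced endomorphism $\overline{Y}$ on $G=\bigoplus_i \mathbb{Z}/L_i\mathbb{Z}$, localize at each prime, and exploit a block-triangular structure to reduce to the determinants $\det(\overline{Y}_{p,m})$; then for Part~(2) prove $x_i^{L_i}$ is central via the vanishing $q$-integer $[L_i]_{q_{ij}}$ and finish by the filtered-to-graded comparison. One minor difference is that you test injectivity on the $p$-torsion subgroup $G_p[p]$, whereas the paper tests surjectivity on $M\otimes\mathbb{F}_p$; these are dual reductions and yield block-triangular matrices with the \emph{same} diagonal blocks $\overline{Y}_{p,m}$ (though the off-diagonal triangularity goes in opposite directions), so both reach the determinant criterion equally well.

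The one step that is not adequately justified is the implication ``$Z(S)$ is a polynomial ring $\Rightarrow Z(S)=T[x_1^{L_1},\ldots,x_n^{L_n}]$.'' Your phrase ``incompatible with algebraic independence'' does not pin down a contradiction: an extra central monomial $x^{\mathbf a}$ satisfying $(x^{\mathbf a})^k\in T[x_1^{L_1},\ldots,x_n^{L_n}]$ is just an integral element, and nothing forces $x^{\mathbf a}$ or the $x_i^{L_i}$ to be among a set of polynomial generators. What is actually needed is the (toric) fact that the semigroup algebra $T[M]$ with $M=K\cap\mathbb{Z}_{\ge 0}^n$ is polynomial only when $K=\bigoplus_i a_i\mathbb{Z}$, after which $a_i=L_i$ follows since $L_i$ is the least exponent making $x_i^{L_i}$ central. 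The paper sidesteps this by invoking \cite[Lemma~2.3]{CPWZ2}, which directly shows that if the center is polynomial it must be of the form $T[x_1^{a_1},\ldots,x_n^{a_n}]$; you should either supply the toric/semigroup argument or cite that lemma.
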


The above criterion can be simplified when $n=3$ or $4$ 
[Corollaries \ref{xxcor5.4} and \ref{xxcor5.5}]. The point of
Theorem \ref{xxthm0.3} is that it provides an explicit linear 
algebra criterion for when the center of $T_{\bf q}[x_1,\cdots,x_n]$
is isomorphic to a polynomial ring. One interesting question 
after this is the following.

\begin{question}
\label{xxque0.4} Suppose that $A:=\VQA$ is finitely
generated and free over its center $Z$. What is the
discriminant $d(A/Z)$?
\end{question}

Theorems \ref{xxthm0.1} and \ref{xxthm0.2} answer this 
question for two special cases.

A secondary goal of this paper is to provide some quick applications.
These discriminant formulas have potential applications in algebraic 
geometry, number theory and the study of Clifford algebras. 
In Section \ref{xxsec8} (the final section), we give some immediate 
applications of discriminants to the cancellation problem and 
the automorphism problem for several classes of noncommutative 
algebras. 

Let us briefly review some definitions. An algebra $A$ is called 
{\it cancellative} if $A[t]\cong B[t]$ for some algebra $B$ implies
$A\cong B$. Let $\Aut(A)$ be the group of all algebra automorphisms
of $A$. Let $A$ be connected graded. An algebra automorphism $g$ of 
$A$ is called {\it unipotent} if 
$$g(v) = v + {\text{(higher degree terms)}}$$
for all homogeneous elements $v\in A$. Let $\Aut_{uni}(A)$ denote
the subgroup of $\Aut(A)$ consisting of all unipotent automorphisms
\cite[After Theorem 3.1]{CPWZ2}. When $\Aut_{uni}(A)$ is trivial,
$\Aut(A)$ is usually small and easy to handle. We will give a criterion
on when $\Aut_{uni}(A)$ is trivial. 

Let $A$ be a domain and $F$ be a 
subset of $A$. Let $Sw(F)$ be the set of $g\in A$ such that $f=agb$ 
for some $a,b\in A$ and $0\neq f\in F$. Let $D_1(F)$ be the 
$k$-subalgebra of $A$ generated by $Sw(F)$. For $n>2$, we define 
$D_n(F)=D_1(D_{n-1}(F))$ inductively, and define
$D(F)=\bigcup_{n\geq 1} D_n(F)$. This algebra is called the 
the {\it $F$-divisor subalgebra of $A$}. When $F=\{d(A/Z)\}$,
$D(F)$ is called {\it discriminant-divisor subalgebra of $A$}
and is denoted by ${\mathbb D}(A)$.
The main result in Section \ref{xxsec8} is the following.

\begin{theorem}
\label{xxthm0.5} Suppose $k$ is a field of characteristic zero.
Let $A$ be a connected graded domain of finite
Gelfand-Kirillov dimension. Assume that $A$ is finitely 
generated and free over its center. If ${\mathbb D}(A)=A$, then $A$ is 
cancellative and $\Aut_{uni}(A)=\{1\}$.
\end{theorem}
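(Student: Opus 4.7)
The plan is to derive both conclusions from a single rigidity property of $d := d(A/Z(A))$, combined with an iterative divisor-closure argument powered by the hypothesis ${\mathbb D}(A) = A$. I first establish that any locally nilpotent derivation (LND) $\partial$ of $A$ (or of a polynomial extension $A[t_1,\ldots,t_n]$) annihilates $d$, and that any $g \in \Aut_{uni}(A)$ fixes $d$; the rigidity then propagates through the chain $\{d\} \subseteq Sw(\{d\}) \subseteq D_1(\{d\}) \subseteq \cdots \subseteq {\mathbb D}(A)$.

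For the rigidity step, fix a free $Z(A)$-basis $\{b_i\}$ of $A$. Any $k$-algebra automorphism $g$ has a matrix $M$ in this basis with entries in $Z(A)$, and the trace pairing transforms by $M$ and its transpose, giving $g(d) = (\det M)^2 \cdot d$. Since $A$ is a connected graded domain, $Z(A)^\times = k^\times$, so $g(d) = c\cdot d$ for some $c \in k^\times$. If $g \in \Aut_{uni}(A)$, then $g(d) = d + (\text{strictly higher-degree terms})$ by unipotency; compared with the homogeneous identity $g(d) = cd$ this forces $c = 1$. For an LND $\partial$ in characteristic zero, the one-parameter family of automorphisms $\exp(t\partial)$ yields $\exp(t\partial)(d) = c(t)\cdot d$ and hence $\partial(d) = \alpha \cdot d$ for some $\alpha \in k$; iterating gives $\partial^n(d) = \alpha^n d$, so local nilpotency of $\partial$ forces $\alpha = 0$, and thus $\partial(d) = 0$.

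The iteration in the LND case uses the Makar-Limanov-Freudenburg fact that $\ker\partial$ is factorially closed in a domain: if $p, q$ are the respective $\partial$-orders of $a, b$, the Leibniz formula gives $\partial^{p+q}(ab) = \binom{p+q}{p}\partial^p(a)\partial^q(b) \neq 0$, so $\partial(ab) = 0$ forces $p = q = 0$. Thus $\partial(d) = 0$ implies $Sw(\{d\}) \subseteq \ker\partial$; since $\ker\partial$ is a subalgebra, $D_1(\{d\}) \subseteq \ker\partial$, and by induction ${\mathbb D}(A) \subseteq \ker\partial$, so $\partial = 0$ under the hypothesis. For the unipotent case, I use that in the connected graded domain $A$ every two-sided factorization $d = ahb$ of the homogeneous element $d$ forces $a, h, b$ to be homogeneous: comparing the top and bottom homogeneous components of $ahb$ (using the domain property to avoid cancellation) and noting that $\deg a_{\mathrm{top}} + \deg h_{\mathrm{top}} + \deg b_{\mathrm{top}} = \deg d = \deg a_{\mathrm{bot}} + \deg h_{\mathrm{bot}} + \deg b_{\mathrm{bot}}$ together with termwise inequality forces termwise equality. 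Then $g(h) \in Sw(\{d\})$ is homogeneous while unipotency gives $g(h) = h + (\text{strictly higher degree})$, so $g(h) = h$. Iterating through each graded stage $D_n(\{d\})$ yields $g = 1$ on ${\mathbb D}(A) = A$.

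For cancellation, I apply the LND argument to $A[t_1,\ldots,t_n]$: its center is $Z(A)[t_1,\ldots,t_n]$, the basis $\{b_i\}$ remains a free basis, and the discriminant is still $d$; a $t$-degree bookkeeping shows that every two-sided divisor in $A[t_1,\ldots,t_n]$ of any element of $A$ already lies in $A$, so ${\mathbb D}(A[t_1,\ldots,t_n]) \supseteq {\mathbb D}(A) = A$. Hence every LND of $A[t_1,\ldots,t_n]$ vanishes on $A$, i.e., $A$ is strongly LND-rigid. A Makar-Limanov / Bell-Zhang-type argument then gives cancellation: an isomorphism $\varphi: A[t] \cong B[t]$ transports the standard LND $\partial/\partial t$ of $B[t]$ back to an LND of $A[t]$, which must vanish on $A$, so $\varphi(A) \subseteq B$; the intrinsic characterization of $A$ inside $A[t]$ as the intersection of kernels of all LNDs then pins down $A$ and yields $A \cong B$. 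The main obstacle I anticipate is completing the iteration in the unipotent case beyond $D_1(\{d\})$: since later stages $D_n(\{d\})$ can contain non-homogeneous elements whose two-sided divisors need not be homogeneous, one must either pass to the graded analogue of ${\mathbb D}(A)$ built from homogeneous factorizations, or show that the non-homogeneous contributions do not enlarge the relevant subalgebra.
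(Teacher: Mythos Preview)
Your treatment of LND rigidity and cancellation is essentially the paper's argument: the key inputs are that $\partial(d)=0$ for any LND (the paper quotes this from \cite{CPWZ2}; your exponential argument also works once one observes $(Z[t])^\times=k^\times$ so that the unit $c(t)$ is constant), the factorial closure of $\ker\partial$ in a domain (Lemma~\ref{xxlem7.4}), and the Bell--Zhang cancellation theorem for strongly LND-rigid algebras of finite GK-dimension (Theorem~\ref{xxthm8.2}).

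The gap you flag in the unipotent case is genuine and is exactly where your argument breaks. Your first step is fine: since $d$ is homogeneous, every $h\in Sw(\{d\})$ is homogeneous, $g(h)$ is again a homogeneous divisor of $g(d)=d$, and comparing with $g(h)=h+(\text{higher})$ forces $g(h)=h$; so $g$ fixes $D_1(\{d\})$ pointwise. But $D_1(\{d\})$, being the subalgebra generated by $Sw(\{d\})$, already contains non-homogeneous elements. For $f\in D_1(\{d\})$ with $f=ahb$ the degree identity $\deg g(a)+\deg g(h)+\deg g(b)=\deg f$ together with $\deg g(\cdot)\ge\deg(\cdot)$ only yields $\deg g(h)=\deg h$, not $g(h)=h$; there is no homogeneity left to invoke, and the induction does not close. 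Your proposed fallback of passing to a ``graded'' ${\mathbb D}(A)$ built from homogeneous factorizations would not help either, since the hypothesis is that the ordinary ${\mathbb D}(A)$ equals $A$, and there is no reason the homogeneous version should.

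The paper does not attempt to show that $g$ fixes each $D_n(\{d\})$ pointwise. Instead it introduces the \emph{$\Aut$-bounded subalgebra} $\beta(A)=\{x\in A:\sup_{g\in\Aut(A)}\deg g(x)<\infty\}$ and proves ${\mathbb D}(A)\subseteq\beta(A)$ (Proposition~\ref{xxpro7.11}). The point is that the property ``bounded $\Aut$-degree'', unlike ``homogeneous and $g$-fixed'', \emph{does} propagate through arbitrary factorizations $f=ahb$ in a graded domain, because $\deg g(h)\le\deg g(f)$ for every $g$. Under the hypothesis ${\mathbb D}(A)=A$ one obtains $\beta(A)=A$, and then for any unipotent $g$ the operator $(1-g)$ is locally nilpotent on all of $A$: the upper bound $\deg(1-g)^n(v)<N_0$ coming from boundedness collides with the lower bound $\deg_l(1-g)^n(v)\ge n+\deg_l v$ coming from unipotency, forcing $(1-g)^n(v)=0$ for $n\gg 0$ (Lemma~\ref{xxlem7.12}). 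Hence $\log(g)=-\sum_{i\ge 1}\tfrac{1}{i}(1-g)^i$ is a well-defined locally nilpotent derivation of $A$, and the already-established equality ${\rm ML}(A)=A$ forces $\log(g)=0$, whence $g=\exp(\log g)=1$. So the unipotent case is \emph{reduced back to the LND case} via $\beta(A)$ and $\log$, rather than handled by a parallel divisor-closure induction.
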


The above theorem can be applied to some Artin-Schelter regular algebras
of global dimension four in Examples \ref{xxex6.3} and \ref{xxex8.4}.
Further applications are certainly expected.

This paper is organized as follows. Background material about discriminants
is provided in Section \ref{xxsec1}. We prove Theorem \ref{xxthm0.1} in
Section \ref{xxsec2} and Theorem \ref{xxthm0.2} in Section \ref{xxsec3}.
Sections \ref{xxsec4}-\ref{xxsec6} concern the question of when 
$T_{\bf q}[x_1,\cdots,x_n]$ and $\VQA$ are finitely generated and free over
their centers and contain the proof of Theorem \ref{xxthm0.3}. In
Section \ref{xxsec7}, we review and introduce some invariants 
related to discriminants, locally nilpotent 
derivations, and automorphisms, which will be used in Section 
\ref{xxsec8}. In Section \ref{xxsec8}, some applications are 
provided and Theorem \ref{xxthm0.5} is proven.

\section{Preliminaries}
\label{xxsec1}
In this section we recall some definitions and basic properties of 
the discriminant. A basic reference is \cite[Section 1]{CPWZ1}.

Throughout let $k$ be a base commutative domain and everything is over
$k$. Let $A$ be an algebra and $Z$ be a central subalgebra of $A$
such that $A$ is finitely generated and free over $Z$. A modified version 
of the discriminant was introduced in \cite{CPWZ2} when $A$ is not free
over $Z$; however, in this paper, we only consider the case when $A$ is 
finitely generated and free over $Z$. Let $r$ be the rank of $A$ over $Z$. 

We embed $A$ in the endomorphism ring $\End(A_Z)$ by sending 
$a\in A$ to the left multiplication $l_a: A\to A$. Since $A$ is a free
over $Z$ of rank $r$, $\End(A_Z)\cong M_{r\times r}(Z)$. Define the
trace function
\begin{equation}
\label{E1.0.1}\tag{E1.0.1}
\tr: A\to \End(A_Z)\cong M_{r\times r}(Z) \xrightarrow{tr_m} Z
\end{equation}
where $tr_{m}$ is the usual matrix trace. The trace function $\tr$
is independent of the choice of basis of $A$ over $Z$. 

\begin{definition} \cite[Definition 1.3(3)]{CPWZ1}
\label{xxdef1.1} Retain the above notation. Suppose that $A$ is a free module
over a central subalgebra $Z$ with a $Z$-basis $\{z_1,\cdots, z_r\}$.
The discriminant of $A$ over $Z$ is defined to be
$$d(A/Z)=\det (\tr(z_i z_j))_{r\times r} \in Z.$$
\end{definition}

By \cite[Proposition 1.4(2)]{CPWZ1}, $d(A/Z)$ is unique up to a scalar
in $Z^\times$. For $x,y\in Z$, we use the notation
$x=_{Z^\times} y$ to indicate that $x=cy$ for some $c\in Z^{\times}$.
So $d(A/Z)=_{Z^{\times}} \det (\tr(z_i z_j))_{r\times r} $ as in
\cite[Definition 1.3(3)]{CPWZ1}. 
The following lemma is easy.

\begin{lemma}
\label{xxlem1.2} Retain the notation as in Definition \ref{xxdef1.1}.
Let $(A',Z')$ be another pair of algebras such that $Z'$ is a central
subalgebra of $A'$ and $A'$ is a free $Z'$-module of rank $r$.
Let $g: A\to A'$ be an algebra homomorphism such that 
\begin{enumerate}
\item[(i)]
$g(Z)\subseteq Z'$.
\item[(ii)]
$\{g(z_1), \cdots, g(z_r)\}$ is a $Z'$-basis of $A'$.
\end{enumerate}
Then $g(d(A/Z))=_{(Z')^{\times}} d(A'/Z')$.
\end{lemma}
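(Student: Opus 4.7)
The plan is to unfold Definition \ref{xxdef1.1} and verify that $g$ intertwines the two trace maps. Concretely, for $a \in A$, write the matrix of left multiplication by $a$ with respect to the $Z$-basis $\{z_1,\dots,z_r\}$ as $M(a) = (c_{ij}(a)) \in M_{r\times r}(Z)$, so that $a\, z_j = \sum_i c_{ij}(a)\, z_i$. Applying $g$ and using hypothesis (i) to place the coefficients in $Z'$, we obtain
\[
g(a)\, g(z_j) \;=\; \sum_i g(c_{ij}(a))\, g(z_i),
\]
and hypothesis (ii) guarantees that $\{g(z_1),\dots,g(z_r)\}$ is a $Z'$-basis of $A'$. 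Consequently the matrix of left multiplication by $g(a)$ on $A'$ with respect to this basis is precisely the entrywise image $g(M(a))$. Taking matrix traces yields the key identity $\tr(g(a)) = g(\tr(a))$.

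Applying this identity to $a = z_i z_j$ gives $\tr(g(z_i)g(z_j)) = g(\tr(z_i z_j))$, and hence
\[
\det\bigl(\tr(g(z_i)g(z_j))\bigr)_{r \times r} \;=\; g\!\left(\det\bigl(\tr(z_i z_j)\bigr)_{r\times r}\right) \;=\; g(d(A/Z)).
\]
On the other hand, by Definition \ref{xxdef1.1} applied with respect to the $Z'$-basis $\{g(z_1),\dots,g(z_r)\}$ of $A'$, the left-hand side is a valid representative for $d(A'/Z')$. Since the discriminant is well defined only up to a scalar in $(Z')^{\times}$ (the remark following Definition \ref{xxdef1.1}), we conclude $g(d(A/Z)) =_{(Z')^{\times}} d(A'/Z')$.

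The only substantive point is the trace identity $\tr \circ g = g \circ \tr$, which is precisely where both hypotheses (i) and (ii) are used: (i) to ensure the matrix entries land in $Z'$ so that $g$ can be applied entrywise as a ring map, and (ii) to ensure this image matrix genuinely represents left multiplication by $g(a)$ in a $Z'$-basis of the correct rank. Once this identity is in hand, the remainder is a routine determinantal manipulation; I anticipate no real obstacle.
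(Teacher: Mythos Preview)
Your proof is correct and follows essentially the same approach as the paper's own proof: establish the trace identity $\tr(g(a)) = g(\tr(a))$ by applying $g$ to the structure-constant equations $a\,z_j = \sum_i c_{ij}(a)\,z_i$, then feed this into the determinant defining the discriminant. The only difference is cosmetic (your left-multiplication matrix is the transpose of theirs), and your closing paragraph explaining where hypotheses (i) and (ii) enter is a clarification the paper leaves implicit.
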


\begin{proof} For any $a\in A$, we denote $a'=g(a)$. 
Write $a z_i= \sum_{j=1}^r a_{ij} z_j$ for all $i$. By applying
$g$ to the last equation, we have $a' z'_i= \sum_{j=1}^r a'_{ij} 
z'_j$. By definition \eqref{E1.0.1}, $\tr(a)=\sum_{i} a_{ii}$ and
$$\tr(g(a))=\tr(a')=\sum_i a'_{ii}=g(\sum_i a_{ii})=g(\tr(a))$$
for all $a\in A$. By Definition \ref{xxdef1.1} and the above equation,
$$g(d(A/Z))=g(\det(\tr(z_iz_j))_{r\times r})
=\det(\tr(z'_iz'_j))_{r\times r}=_{(Z')^{\times}} d(A'/Z').$$
\end{proof}

Let $Z$ be a central subalgebra of $A$ and consider an Ore set 
$C\subset Z$. Then the localization $ZC^{-1}$ is central in 
$AC^{-1}$. 

\begin{lemma}
\label{xxlem1.3} 
Let $Z$ be a central subalgebra of $A$. Suppose $A$ is free over 
$Z$ of rank $r$. Then $AC^{-1}$ is free over $ZC^{-1}$ of rank $r$. 
As a consequence,
$$d(AC^{-1}/ZC^{-1})=_{(ZC^{-1})^{\times}} d(A/Z).$$
\end{lemma}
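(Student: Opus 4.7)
The plan is to reduce everything to the observation that localization at a central Ore set in $Z$ preserves the data used to define the discriminant. First I would verify freeness: the natural map $A\otimes_Z ZC^{-1}\to AC^{-1}$ is an isomorphism because $C$ lies in the center $Z$, so elements of $C$ are regular and denominators can be cleared. Under this identification, if $\{z_1,\dots,z_r\}$ is a $Z$-basis of $A$, then the same set (regarded via the canonical map $A\to AC^{-1}$) is a $ZC^{-1}$-basis of $AC^{-1}$, and in particular the rank is preserved.

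Next I would check compatibility of the trace function with localization. Writing $a z_i=\sum_j a_{ij}z_j$ with $a_{ij}\in Z$ gives $\tr_A(a)=\sum_i a_{ii}$. The same equation, read in $AC^{-1}$, expresses the left-multiplication matrix of $a$ (viewed in $AC^{-1}$) in the basis $\{z_i\}$ over $ZC^{-1}$. By $ZC^{-1}$-linearity of the trace on $\End((AC^{-1})_{ZC^{-1}})$, the trace on $AC^{-1}$ applied to any element $a c^{-1}$ (with $a\in A$, $c\in C$) is simply $c^{-1}\tr_A(a)$. In particular, for the elements $z_iz_j\in A$, the trace values computed inside $AC^{-1}$ coincide with those computed inside $A$, viewed in $ZC^{-1}$ via the localization map.

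Having established both points, the conclusion follows directly from Definition \ref{xxdef1.1}: using the basis $\{z_1,\dots,z_r\}$ on both sides,
\[
d(AC^{-1}/ZC^{-1})=\det\bigl(\tr_{AC^{-1}}(z_iz_j)\bigr)_{r\times r}
=\det\bigl(\tr_{A}(z_iz_j)\bigr)_{r\times r}=d(A/Z),
\]
and since the discriminant is defined only up to a unit in the base ring, the displayed equation holds modulo $(ZC^{-1})^\times$. The only real obstacle is being careful about the identifications in the first two steps; once one sees that a $Z$-basis remains a $ZC^{-1}$-basis after localization and that left multiplication matrices are simply transported, the equality of discriminants is essentially immediate, and Lemma \ref{xxlem1.2} could in fact be invoked (with $g$ the localization map) to package the argument if desired.
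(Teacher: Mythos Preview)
Your proposal is correct and follows essentially the same approach as the paper: the paper's proof simply observes that a $Z$-basis $\{z_1,\dots,z_r\}$ of $A$ remains a $ZC^{-1}$-basis of $AC^{-1}$ and then invokes Lemma~\ref{xxlem1.2} with $g$ the localization map. You unpack the trace computation explicitly before noting (correctly) that Lemma~\ref{xxlem1.2} could package the argument; this is just a more detailed rendering of the same idea.
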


\begin{proof} Let $\{z_1, \cdots, z_r\}$ is a $Z$-basis of 
$A$. Then it is also a $ZC^{-1}$-basis of $AC^{-1}$. The 
consequence follows from Lemma 1.2. 
\end{proof}

We will need the following result from \cite[Proposition 2.8]{CPWZ2}.
We change notation from $k$ to $T$ to denote a commutative
domain in the following proposition.

\begin{proposition} \label{xxpro1.4}
Let $T$ be a commutative domain and let
$A=T_{\bf q}[x_1,\cdots,x_n]$. Suppose $Z:=T[x_1^{\alpha_1},\cdots,
x_n^{\alpha_n}]$ is a central subalgebra of $A$, where the
$\alpha_i$ are positive integers. 
\begin{enumerate}
\item[(1)]\cite[Proposition 2.8]{CPWZ2}
Let $r=\prod_{i=1}^n \alpha_i$. Then 
$$d(A/Z)=_{T^{\times}} r^r (\prod_{i=1}^n x_i^{\alpha_i-1})^r.$$
\item[(2)]
If $n=2$ and $q_{12}$ is a primitive $m$-th root of unity and
$Z=T[x_1^m, x_2^m]$, then
$$d(A/Z)=_{T^{\times}} m^{2m^2}(x_1^m x_2^m)^{m(m-1)}.$$
\item[(3)]
If $q_{ij}=-1$ for all $i<j$ and $\alpha_i=2$ for all $i$, then
$$d(A/Z)=_{T^{\times}} 2^{n 2^n}(\prod_{i=1}^n x_i^2)^{2^{n-1}}.$$
\end{enumerate}
\end{proposition}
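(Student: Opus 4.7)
The plan is to regard part (1) as the substantive statement and deduce parts (2) and (3) from it as specializations. Part (1) is quoted verbatim from \cite[Proposition 2.8]{CPWZ2}, so I would take it as given; for completeness I would sketch it as follows. Choose the natural $Z$-basis $\{b_I := x_1^{i_1}\cdots x_n^{i_n} : 0 \le i_l < \alpha_l\}$ of $A$, which has size $r = \prod_l \alpha_l$. In the skew polynomial ring one has $b_I b_J = c_{I,J}\, x_1^{i_1+j_1}\cdots x_n^{i_n+j_n}$ for a unit $c_{I,J}\in T^\times$, and this monomial lies in $Z$ precisely when $\alpha_l \mid i_l+j_l$ for every $l$. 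For each $I$ there is a unique partner $J = J(I)$ satisfying this, determined by $j_l \equiv -i_l \pmod{\alpha_l}$. Since $\tr(z) = r z$ for $z \in Z$ and $\tr$ vanishes on monomials outside $Z$, the matrix $(\tr(b_I b_J))$ is monomial, and collecting exponents shows its determinant equals $r^r\bigl(\prod_l x_l^{\alpha_l-1}\bigr)^r$ up to a $T^\times$-scalar.

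For part (2), I specialize part (1) with $n=2$ and $\alpha_1 = \alpha_2 = m$. Then $r = m^2$, so $r^r = m^{2m^2}$, and
\[
(x_1^{m-1} x_2^{m-1})^{m^2} = x_1^{(m-1)m^2} x_2^{(m-1)m^2} = (x_1^m x_2^m)^{m(m-1)},
\]
which matches the claimed expression $m^{2m^2}(x_1^m x_2^m)^{m(m-1)}$.

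For part (3), I specialize part (1) with $\alpha_i = 2$ for all $i$, so $r = 2^n$. Then $r^r = 2^{n\cdot 2^n}$ and
\[
\Bigl(\prod_i x_i\Bigr)^{2^n} = \prod_i (x_i^2)^{2^{n-1}} = \Bigl(\prod_i x_i^2\Bigr)^{2^{n-1}},
\]
giving the stated formula.

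The only genuine difficulty lies in part (1), which is handled in \cite{CPWZ2} via the monomial-matrix computation sketched above; once that is in hand, parts (2) and (3) are entirely routine bookkeeping of exponents.
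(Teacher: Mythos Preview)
Your proposal is correct and follows exactly the paper's approach: the paper's proof is the single line ``Parts (2,3) are special cases of part (1),'' and you carry out precisely that specialization with the exponent bookkeeping made explicit. Your optional sketch of part (1) is also accurate and matches the argument in \cite{CPWZ2}.
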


\begin{proof} Parts (2,3) are special cases of part (1).
\end{proof}

The next lemma is a special case \cite[Proposition 4.10]{CPWZ2}. Suppose
$Z$ is a central subalgebra of $A$ and $A$ is free over $Z$ of rank
$r<\infty$. We fix a $Z$-basis of $A$, say $b:=\{b_1=1, b_2, 
\cdots, b_r\}$. Suppose $A$ is an ${\mathbb N}$-filtered algebra such
that the associated graded ring $\gr A$ is a domain. For any element
$f\in A$, let $\gr f$ denote the associated element in $\gr A$. Let $\gr b$
denote the set $\{\gr b_1, \cdots , \gr b_r\}$, which is a subset
of $\gr A$. 

\begin{lemma}
\cite[Proposition 4.10]{CPWZ2}
\label{xxlem1.5} Retain the above notation. Suppose that
$\gr A$ is finitely generated and free over $\gr Z$ with basis
$\gr b$. Then 
$$\gr\; (d(A/Z))=_{(\gr Z)^{\times}} d(\gr A/\gr Z).$$
\end{lemma}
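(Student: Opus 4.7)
The plan is to establish the discriminant identity by first proving the analogous identity for the trace function and then expanding the discriminant as a determinant. Set $e_i:=\deg b_i$ and, for $z\in Z$, write $z^{(m)}$ for the degree-$m$ homogeneous component of $\gr z$ in $\gr Z$ (taken to be zero when $\deg z<m$).

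The first step is a filtration-compatibility statement: since $\gr A=\bigoplus_j \gr Z\cdot \gr b_j$ as graded $\gr Z$-modules, a standard inductive lifting argument shows that any expansion $a=\sum_j a_j b_j\in A$ with $\deg a\le N$ forces $\deg a_j\le N-e_j$ for every $j$. Now fix $a\in A$ of filtered degree $d$, and write $a b_i=\sum_j a_{ij}b_j$ with $a_{ij}\in Z$. Since $\gr A$ is a domain, $\gr(a b_i)=\gr(a)\gr(b_i)$ has degree exactly $d+e_i$, so the compatibility statement gives $\deg a_{ij}\le d+e_i-e_j$ for all $j$. Taking the degree-$(d+e_i)$ component of both sides of $a b_i=\sum_j a_{ij}b_j$ in $\gr A$ yields
$$\gr(a)\gr(b_i)=\sum_j a_{ij}^{(d+e_i-e_j)}\,\gr(b_j).$$
Uniqueness of the basis expansion in $\gr A$ identifies the coefficients $c_{ij}$ in $\gr(a)\gr(b_i)=\sum_j c_{ij}\gr(b_j)$ as $c_{ij}=a_{ij}^{(d+e_i-e_j)}$. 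Reading off the diagonal, $\tr_{\gr A/\gr Z}(\gr a)=\sum_i a_{ii}^{(d)}$, which is precisely the degree-$d$ component of $\tr_{A/Z}(a)=\sum_i a_{ii}$.

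To finish, apply this observation entrywise to the matrix $M=(\tr(b_i b_j))_{r\times r}$: each entry $M_{ij}$ has filtered degree at most $e_i+e_j$, and its degree-$(e_i+e_j)$ component equals $\tr_{\gr A/\gr Z}(\gr b_i\cdot \gr b_j)$, i.e.\ the $(i,j)$-entry of the matrix computing $d(\gr A/\gr Z)$. In the Leibniz expansion $\det M=\sum_\sigma\mathrm{sgn}(\sigma)\prod_i M_{i\sigma(i)}$, every permutation term has filtered degree at most $\sum_i(e_i+e_{\sigma(i)})=2\sum_i e_i$, a bound independent of $\sigma$. The degree-$2\sum_i e_i$ component of a product $\prod_i M_{i\sigma(i)}$ is the product of the degree-$(e_i+e_{\sigma(i)})$ components of its factors, so summing over $\sigma$, the degree-$2\sum_i e_i$ part of $\det M$ equals $\det(\tr(\gr b_i\cdot \gr b_j))=d(\gr A/\gr Z)$. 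Provided this is nonzero, it must be $\gr(d(A/Z))$, giving $\gr(d(A/Z))=_{(\gr Z)^\times} d(\gr A/\gr Z)$.

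The main technical obstacle is the filtration-compatibility step: without the hypothesis that $\gr b$ forms a $\gr Z$-basis of $\gr A$, coefficients $a_{ij}$ in a $Z$-linear expansion can have filtered degree strictly larger than $d+e_i-e_j$, which would destroy the clean relation between $\tr$ and $\gr$. Once compatibility is in place, the rest is routine bookkeeping of filtered degrees through the Leibniz formula and multiplicativity of leading terms in the graded domain $\gr A$. A minor subsidiary point is the nonvanishing of $d(\gr A/\gr Z)$, which follows from generic non-degeneracy of the trace pairing for a finite free module over the domain $\gr Z$; alternatively, the identity may be read to include the degenerate case, since the calculation above shows in any event that the degree-$2\sum_i e_i$ component of $d(A/Z)$ is $d(\gr A/\gr Z)$.
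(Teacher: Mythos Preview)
The paper does not give its own proof of this lemma; it is quoted verbatim from \cite[Proposition 4.10]{CPWZ2} and used as a black box (in the proof of Theorem~\ref{xxthm3.6}). So there is no in-paper argument to compare yours against.

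Your argument is the natural one and is correct in the main: the filtration-compatibility step (that $\deg a_j\le N-e_j$ whenever $\sum_j a_jb_j$ has degree $\le N$) follows exactly as you say from the graded freeness of $\gr A$ over $\gr Z$ together with $\gr A$ being a domain, and the passage from trace to determinant via Leibniz is clean. This yields precisely that the degree-$2\sum_i e_i$ homogeneous component of $d(A/Z)$ equals $d(\gr A/\gr Z)$, which is the heart of the matter.

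The one soft spot is your treatment of the degenerate case. The claim that $d(\gr A/\gr Z)\neq 0$ ``follows from generic non-degeneracy of the trace pairing for a finite free module over the domain $\gr Z$'' is not true in general: a purely inseparable extension such as $\mathbb{F}_p[t]\subset \mathbb{F}_p[t][x]/(x^p-t)$ is a finite free domain extension with identically zero trace form, hence zero discriminant. Your fallback (that in any event the top-degree component of $d(A/Z)$ is $d(\gr A/\gr Z)$) is accurate, but it does not by itself give $\gr(d(A/Z))=d(\gr A/\gr Z)$ when the right-hand side vanishes and $d(A/Z)$ is nonzero of strictly smaller degree. In the paper's single use of the lemma (Theorem~\ref{xxthm3.6}) the graded discriminant is computed explicitly via Proposition~\ref{xxpro1.4}(3) and is nonzero, so the issue is moot there; but if you want a self-contained proof of the lemma as stated, you should either add the hypothesis $d(\gr A/\gr Z)\neq 0$ or supply a separate argument that $d(A/Z)=0$ whenever $d(\gr A/\gr Z)=0$.
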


\section{Discriminant of $A_q$ over its center}
\label{xxsec2}

Let $T$ be a commutative domain and $q \in T^{\times}$ be a primitive 
$n$-th root of unity for some $n\geq 2$. Let $A_{q}$ be the $q$-quantum 
Weyl algebra over $T$ generated by $x$ and $y$ subject to the relation 
$yx=qxy+a$ for some $a\in T$. This agrees with the definition of $A_q$ 
given in the introduction when $T=k$ and $a=1$. It is easy to check that 
the center of $A_q$, denoted by $Z(A_q)$, is $T[x^n, y^n]$, and that 
$A_q$ is free over $Z(A_q)$ of rank $n^2$. A $Z(A_q)$-basis of 
$A_q$ is $\mathcal{B} := \{x^i y^j \mid 0\leq i,j \leq n-1\}$. The aim of 
this section is to compute the discriminant $d(A_q/Z(A_q))$.

Let $A'$ be the $T$-subalgebra of $A_q$ generated by $x':=(1-q)x$ and $y$.
Since $yx'=q x'y+ (1-q)a$ and $(1-q)$ may not be  invertible, there is no 
obvious algebra homomorphism from $A_q$ to $A'$. Let
$Z'$ be the subalgebra $T[(x')^n,y^n]$ which is the center of $A'$. 

\begin{lemma}
\label{xxlem2.1}
Retain the above notation. Then
\begin{eqnarray*}
    d ( A' /Z' ) & = & (1-q)^{n^2(n-1)} d (A_q/Z(A_q)).
\end{eqnarray*}
\end{lemma}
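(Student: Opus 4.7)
The plan is to exploit the simple linear relation between the two bases, $\mathcal{B}=\{x^iy^j : 0\le i,j\le n-1\}$ of $A_q$ over $Z(A_q)$ and $\mathcal{B}'=\{(x')^iy^j : 0\le i,j\le n-1\}$ of $A'$ over $Z'$, namely $(x')^iy^j = (1-q)^i\,x^iy^j$ inside the ambient algebra $A_q$. All discriminant computations will be carried out with respect to these distinguished bases, so the desired equality can be interpreted as a literal identity.

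The first and main step is a trace-comparison: for every $f\in A'$, the trace $\tr'(f)$ taken on $A'$ over $Z'$ with basis $\mathcal{B}'$ coincides with the trace $\tr(f)$ taken on $A_q$ over $Z(A_q)$ with basis $\mathcal{B}$. To see this, write
\begin{equation*}
f\cdot x^iy^j = \sum_{k,l}\alpha^{(i,j)}_{(k,l)}\,x^ky^l,\qquad f\cdot (x')^iy^j = \sum_{k,l}\beta^{(i,j)}_{(k,l)}\,(x')^ky^l,
\end{equation*}
with $\alpha^{(i,j)}_{(k,l)}\in Z(A_q)$ and $\beta^{(i,j)}_{(k,l)}\in Z'$. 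Substituting $(x')^m=(1-q)^m x^m$ into the second expansion and matching coefficients of $x^ky^l$ against $(1-q)^i$ times the first expansion (using freeness of $A_q$ over $Z(A_q)$) gives $(1-q)^k\beta^{(i,j)}_{(k,l)} = (1-q)^i\alpha^{(i,j)}_{(k,l)}$. On the diagonal $i=k$ this yields $\beta^{(i,j)}_{(i,j)} = \alpha^{(i,j)}_{(i,j)}$ without inverting $(1-q)$, and summing gives $\tr'(f)=\tr(f)$.

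With that identity in hand, for each pair of basis elements,
\begin{equation*}
\tr'\bigl((x')^iy^j\cdot (x')^ky^l\bigr) \;=\; \tr\bigl((x')^iy^j\cdot (x')^ky^l\bigr) \;=\; (1-q)^{i+k}\,\tr(x^iy^jx^ky^l),
\end{equation*}
where the last step pulls out the scalar $(1-q)^{i+k}\in T\subseteq Z(A_q)$ by $Z(A_q)$-linearity of $\tr$. Substituting into $d(A'/Z')=\det\bigl(\tr'((x')^iy^j(x')^ky^l)\bigr)$ and factoring $(1-q)^i$ from row $(i,j)$ and $(1-q)^k$ from column $(k,l)$ in the $n^2\times n^2$ matrix collects a total $(1-q)$-exponent of $\sum_{(i,j)} i + \sum_{(k,l)} k = 2\cdot n\cdot \tfrac{n(n-1)}{2}=n^2(n-1)$, while the remaining determinant is $d(A_q/Z(A_q))$ by Definition \ref{xxdef1.1}.

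The only delicate point is the trace-comparison step: since $Z'\subsetneq Z(A_q)$ whenever $1-q$ fails to be a unit, one must verify that no off-diagonal identity forces division by $(1-q)$. But this is precisely why I only use the diagonal relation $\beta^{(i,j)}_{(i,j)}=\alpha^{(i,j)}_{(i,j)}$, which is a clean equality in $Z'$ (and in $Z(A_q)$) with no hidden denominators. Everything else is routine multilinearity of the determinant.
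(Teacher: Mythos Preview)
Your proof is correct and follows essentially the same route as the paper's: establish $\tr'=\tr$ on $A'$, then factor $(1-q)^{i+k}$ out of each entry of the discriminant matrix and count the total exponent $2n\sum_{i=0}^{n-1} i = n^2(n-1)$. The only cosmetic difference is that the paper obtains $\tr'=\tr$ by invoking basis-independence of the trace after noting that $A'$ and $A_q$ share a common ring of fractions, whereas you extract it directly from the diagonal coefficient identity $(1-q)^i\beta^{(i,j)}_{(i,j)}=(1-q)^i\alpha^{(i,j)}_{(i,j)}$ and cancel in the domain; both justifications are valid.
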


\begin{proof}
Let $\tr': A'\to Z'$ be the trace function defined as in \eqref{E1.0.1}.
We use this trace function to compute the discriminant $d(A'/Z')$.

Let $\mathcal{B}' := \{ (x')^i y^j\}_{0\leq i,j\leq n-1}$. Then 
$\mathcal{B}'$ is a $Z'$-basis of $A'$. 
Note that $A'$ and $A_q$
have the same ring of fractions and $Z(A_q)$ and $Z'$ have the
same fraction field. Since the trace function is independent of
the choice of basis we have $\tr'(a)=\tr(a)$ for all
$a\in A'$.

Picking any two elements $b_s=x^{i_s} y^{j_s}$ and $b_t=x^{i_t} y^{j_t}$
in $\mathcal{B}$, we have corresponding elements 
$b'_s=(x')^{i_s} y^{j_s}$ and $b'_t=(x')^{i_t} y^{j_t}$ in $\mathcal{B}$. 
Hence
$$\tr'(b'_s b'_t)=\tr((1-q)^{i_s+i_t} b_sb_t)
=(1-q)^{i_s+i_t}\tr(b_sb_t).$$
By definition, $d(A'/Z')=\det [\tr'(b'_s b'_t)_{b'_s,b'_t\in \mathcal{B}'}]$.
Hence we have
$$\begin{aligned}
d(A'/Z')&=\det [(\tr'(b'_s b'_t))_{b'_s,b'_t\in \mathcal{B}'}]\\
&= \det [((1-q)^{i_s+i_t}\tr(b_s b_t))_{b_s,b_t\in \mathcal{B}} ]\\
&= (1-q)^N \det [(\tr(b_s b_t))_{b_s,b_t\in \mathcal{B}} ]\\
&= (1-q)^N d(A_q/Z(A_q)),
\end{aligned}
$$
where
$$N=\sum_{{\text{all}}\; i_s, i_t} (i_s+i_t)=2\sum_{{\text{all}}\; i_s} i_s
=2 n (0+1+2+\cdots+(n-1))=n^2 (n-1).$$
The assertion follows.
\end{proof}

Following the above lemma, we first compute $d(A'/Z')$. We can re-write
$A'$ as $T\langle x', y\rangle/(yx'-q x'y -(1-q)a)$ so that the positions
of $x'$ and $y$ are more symmetrical. 

Let $C=\{(y^n)^i\mid i\geq 1\}$. Consider the localizations 
$Z'':=Z'C^{-1}$ and $A'':=A' C^{-1}$. Let 
$$x'' := x' - a y^{-1}=(1-q)x - (a y^{-n}) y^{n-1} \in A''.$$

\begin{lemma}
\label{xxlem2.2}
Retain the above notation. The following hold:
\begin{enumerate}
\item[(1)] 
$y x''-q x'' y=0$. 
\item[(2)]
$A'':=A'C^{-1}$ is generated by $T$, $(y^{n})^{-1}$, $x''$ and $y$.
\item[(3)] 
$(x'')^{n}$ is central and $d(A''/Z'')=_{(Z'')^{\times}} 
n^{2n^2}((x'')^n y^n)^{n(n-1)}$.
\item[(4)] 
$ d(A''/Z'')=_{(Z'')^{\times}} 
n^{2n^2}((1-q)^n x^n y^n - a^{n} )^{n(n-1)}.$
\end{enumerate}
\end{lemma}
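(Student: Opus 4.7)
The plan is to verify the four parts sequentially; parts (1), (2), (4) are short computations, and the substance is in part (3). For (1), one substitutes $x'' = x' - ay^{-1}$ and uses that $a \in T$ is central together with the defining relation $yx' = qx'y + (1-q)a$:
\[
yx'' = yx' - a = qx'y - qa, \qquad qx''y = qx'y - qa,
\]
so $yx'' = qx''y$. For (2), the identities $x' = x'' + ay^{-1}$ and $y^{-1} = y^{n-1}(y^n)^{-1}$ show that the standard generating set $\{T, x', y, (y^n)^{-1}\}$ of $A'' = A'C^{-1}$ lies in the subalgebra generated by $\{T, (y^n)^{-1}, x'', y\}$; the reverse inclusion is immediate from $x'' = x' - ay^{-1} \in A''$.

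For (3), centrality of $(x'')^n$ is automatic: by (1) together with $q^n = 1$, $(x'')^n$ commutes with $y$, and it trivially commutes with $T$ and $x''$, which by (2) generate $A''$. To compute the discriminant, I would identify $A''$ with the localized quantum plane $T_q[x'', y][y^{-n}]$: there is a natural surjection from the latter onto $A''$ given by the universal property of the quantum plane (using (1)) together with (2), and a rank-$n^2$ comparison over the polynomial ring $T[(x'')^n, y^{\pm n}]$ forces it to be an isomorphism. The one nontrivial point is checking that the subalgebra $Z'' := Z'C^{-1} = T[(x')^n, y^{\pm n}]$ coincides with $T[(x'')^n, y^{\pm n}]$. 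Setting $u = x''$ and $w = ay^{-1}$, relation (1) gives $wu = q^{-1}uw$, and since $q^{-1}$ is also a primitive $n$-th root of unity, the $q$-binomial theorem (using the standard vanishing $\binom{n}{k}_{q^{-1}} = 0$ for $1 \le k \le n-1$) yields
\[
(x')^n = (u + w)^n = u^n + w^n = (x'')^n + a^n y^{-n}.
\]
Thus the two polynomial presentations of $Z''$ agree, and the discriminant formula follows by applying Proposition \ref{xxpro1.4}(2) to the quantum plane $T_q[x'', y]$ over $T[(x'')^n, y^n]$ and Lemma \ref{xxlem1.3} to localize at $y^n$.

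For (4), substitute $(x')^n = (1-q)^n x^n$ into the identity $(x'')^n = (x')^n - a^n y^{-n}$ established above to obtain $(x'')^n y^n = (1-q)^n x^n y^n - a^n$, using centrality of $x^n$ and $y^n$ to rearrange; plugging this into the formula from (3) gives (4). The only genuinely delicate step is the centre-identification in (3): one must both exhibit $(x'')^n$ as central and use the $q$-binomial vanishing to pass from the $(x')$-presentation to the $(x'')$-presentation of $Z''$. Once this is in place, the remainder is bookkeeping against Proposition \ref{xxpro1.4}(2) and Lemma \ref{xxlem1.3}.
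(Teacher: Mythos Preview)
Your proof is correct, and for parts (1), (2) and the centrality claim in (3) it matches the paper. The substantive difference is how you establish the identity $(x')^n = (x'')^n + a^n y^{-n}$: you observe that $u = x''$ and $w = ay^{-1}$ satisfy $wu = q^{-1}uw$ and invoke the $q$-binomial theorem together with the vanishing $\binom{n}{k}_{q^{-1}} = 0$ for $1 \le k \le n-1$, which settles the matter in one line. The paper instead postpones this computation to part (4) and carries it out by a page-long recursive argument: writing $z^n = \sum c_i (x')^i \psi^{n-i}$ with $\psi = y^{-1}$, using centrality of $z^n$ to conclude $c_i = 0$ for $0 < i < n$, then computing $c_0$ by repeatedly reducing $z^n - (x')^n$ modulo the $Z''$-submodule spanned by the mixed monomials $(x')^i\psi^j$ with $0 < i < n$. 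Your $q$-binomial shortcut is a genuine simplification of this step. A second, minor difference: for the discriminant formula in (3) the paper applies Lemma~\ref{xxlem1.2} directly to the homomorphism $T_q[x_1,x_2] \to A''$, checking only that $\{(x'')^iy^j\}$ is a $Z''$-basis (a triangular change from $\{(x')^iy^j\}$); you instead identify $A''$ with the localized quantum plane and apply Lemma~\ref{xxlem1.3}. Both routes work, though the paper's avoids having to argue that the map $T_q[x_1,x_2][x_2^{-n}] \to A''$ is injective.
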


\begin{proof}
(1) We have $yx''-q x''y=y((1-q)x - a y^{-1})-q((1-q)x - a y^{-1})y = 0$.

(2) This is clear.

(3) Since $q^{n} =1$, $(x'')^n$ commutes with $y$ by part (1).
By part (2), $(x'')^n$ commutes with every element in $A''$.

Consider an algebra homomorphism $g: T_q[x_1,x_2]\to A''$ determined by
$g(x_1)=x''$ and $g(x_2)=y$. Then the center of $B:=T_q[x_1,x_2]$
is $R:=T[x_1^n,x_2^n]$ and $\{x_1^i x_2^j\mid 0\leq i,j \leq n-1\}$
is an $R$-basis of $B$. It is clear that $A''$ is free of rank 
$n^2$ and $A''=\sum_{0\leq i,j \leq n-1} (x')^i y^j Z''$. Hence 
$\{(x'')^i y^j\mid 0\leq i,j \leq n-1\}$ is a $Z''$-basis of $A''$. 
Then the hypotheses of Lemma \ref{xxlem1.2} hold. Applying 
Lemma \ref{xxlem1.2} to $g$, we have $g (d(B/R))=_{(Z')^{\times}} d(A''/Z'')$.
By Proposition \ref{xxpro1.4}(2), $d(B/R)=n^{2n^2} (x_1^n x_2^n)^{n(n-1)}$.
Therefore, $d(A''/Z'')=_{(Z')^{\times}} n^{2n^2} ((x'')^n y^n)^{n(n-1)}$. 

(4) In the following, we will denote $\psi =y^{-1}$, $z=x''$ and $p=q^{-1}$. 
The commutation relation between $x'$ and $\psi$ is
\begin{equation}
\label{E2.2.1}\tag{E2.2.1}
\psi x' = (1-q)\psi x  = (1-q)(p x \psi -p a \psi^{2}) = p x' \psi - (p-1) a \psi^2 .
\end{equation}

Recall that $z=x''=x' - a\psi$. Write $z^{n}=\sum_{i=0}^{n} c_{i} (x')^{i} 
\psi^{n-i}$. Since $z^{n}$ is central (see part (3)), we have $c_{i} =0$ 
unless $i=0,n$. It is clear that $c_{n} =1$. Next we determine $c_0$.
Since $A''$ is a free module over $Z''$ with basis $\{(x')^i \psi^j 
\mid 0\leq i,j\leq n-1\}$, we can work modulo the right $Z''$-submodule 
$W$ generated by $(x')^{i} \psi^{j}$ where $0<i<n$ and $0 \leq j<n$.
Let $\equiv$ denote equivalence mod $W$. 

By induction, for $i=1, \ldots ,n-1$, we have
\begin{equation}
\label{E2.2.2}\tag{E2.2.2}
\psi^{i} x' = p^i x' \psi^{i} -(p^{i} - 1)(a \psi^{i+1}). 
\end{equation}

Then $\psi^i x' \equiv -(p^{i} - 1)(a \psi^{i+1})$. 
For each $1\leq j\leq n-1$, write 
$$z^j=\sum_{i=0}^j c_i^j (x')^i \psi^{j-i}.$$ 
Then $x' z^j\in W$ 
for all $j<n-1$ and $x' z^{n-1}\equiv (x')^n$. For each $j$, we have
$\psi^{j-1} z^{n-j}=\sum_{i=0}^{n-j} d^j_i (x')^i \psi^{n-1-i}$ for some
$d^j_i\in Z'$, so
\begin{equation}
\label{E2.2.3}\tag{E2.2.3}
x'\psi^{j-1} z^{n-j}\in W
\end{equation}
for all $j\geq 2$.  By the
above computation and \eqref{E2.2.1}-\eqref{E2.2.3}, we have
$$\begin{aligned}
z^{n} - (x')^n & = (x' - a \psi ) z^{n-1} - (x')^n \\
&=x' z^{n-1} - (x')^n - a \psi z^{n-1}\\
& \equiv - a \psi (x' - a \psi) z^{n-2}\\
& \equiv - a (p x' \psi - (p-1) a \psi^2 - a \psi^2) z^{n-2}\\
& \equiv -a ( -p a ) \psi^{2} z^{n-2} - ap x' \psi z^{n-2}\\
& \equiv -a ( -p a ) \psi^{2} z^{n-2}\\
& \equiv -a ( - pa ) ( \psi^{2} x - a \psi^{3} ) z^{n-3}\\
&\equiv  -a ( - pa ) ( -p^{2} a ) \psi^{3} z^{n-3}\\
      & \qquad \vdots \\
&\equiv -a ( - pa ) ( -p^{2} a ) \cdots ( -p^{n-1} a )
\psi^{n} \\ 
&= (-a)^n p^{(n-1)n/2} \psi^{n} = - a^n \psi^{n}.
\end{aligned}
$$

Therefore
$$z^{n} \equiv - a^{n} \psi^{n} + (x')^n.$$
Hence $c_{0} = - a^{n}$ and $z^n = (x')^n - a^n \psi^n$. Combining all the above, we have
$$(x'')^n y^n=((x')^n - a^n \psi^n) y^n = (x')^n y^n - a^n
= (1-q)^n x^n y^n-a^n.$$
Part (4) follows from part (3) and the above formula.  
\end{proof}

\begin{lemma}
\label{xxlem2.3}
The discriminant of $A'$ over its center $Z'$ is 
$$d(A'/Z') =_{T^{\times}} \; n^{2n^2}((1-q)^n x^n y^n-a^n)^{n(n-1)}.$$
\end{lemma}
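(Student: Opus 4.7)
The plan is to deduce Lemma \ref{xxlem2.3} from Lemma \ref{xxlem2.2}(4) by inverting the localization at $y^n$, and then to sharpen the resulting equality (a priori only up to an element of $(Z'')^\times$) down to an equality up to $T^\times$ via a leading-term comparison using Lemma \ref{xxlem1.5}.

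First I would apply Lemma \ref{xxlem1.3} to the central Ore set $C=\{(y^n)^i\mid i\geq 1\}$ in $Z'$, yielding $d(A'/Z')=_{(Z'')^\times} d(A''/Z'')$. Combining with Lemma \ref{xxlem2.2}(4) gives $d(A'/Z')=u\cdot R$, where $R:=n^{2n^2}((1-q)^n x^n y^n-a^n)^{n(n-1)}$ and $u\in (Z'')^\times$. Since $Z'=T[(x')^n,y^n]$ is a polynomial ring in two variables over the commutative domain $T$, its localization $Z''=T[(x')^n,y^n,(y^n)^{-1}]$ is a Laurent polynomial ring whose unit group is $T^\times\cdot (y^n)^{\mathbb{Z}}$. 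Hence $u=c(y^n)^m$ for some $c\in T^\times$ and $m\in\mathbb{Z}$, and the task reduces to showing $m=0$.

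To pin down $m$, I would invoke Lemma \ref{xxlem1.5} for the standard ${\mathbb N}$-filtration on $A'$ in which $\deg x'=\deg y=1$ and $\deg a=0$. The degree-zero summand $(1-q)a$ of the defining relation vanishes on passing to the associated graded, so $\gr A'=T_q[x',y]$ and $\gr Z'=T[(x')^n,y^n]$. By Proposition \ref{xxpro1.4}(2), $d(\gr A'/\gr Z')=_{T^\times} n^{2n^2}((x')^n y^n)^{n(n-1)}$, which is homogeneous of total degree $2n^2(n-1)$. Because $(\gr Z')^\times=T^\times$, Lemma \ref{xxlem1.5} then gives $\gr d(A'/Z')= c'\, n^{2n^2}((x')^n y^n)^{n(n-1)}$ for some $c'\in T^\times$. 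On the other hand, the top-degree component of $c(y^n)^m R$ is $c\cdot n^{2n^2}(y^n)^m((x')^n y^n)^{n(n-1)}$, of total degree $2n^2(n-1)+nm$. Matching the two degrees forces $m=0$, whence $u=c\in T^\times$ and the claimed formula follows.

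The main obstacle is precisely this last step: Lemma \ref{xxlem1.3} only identifies $d(A'/Z')$ with $d(A''/Z'')$ up to an element of $(Z'')^\times$, which could a priori absorb a nontrivial power of $y^n$, so an independent argument via the associated graded algebra is needed to sharpen the scalar to $T^\times$.
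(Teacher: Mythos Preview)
Your proof is correct, but it takes a genuinely different route from the paper at the crucial step of eliminating the extraneous unit in $(Z'')^\times$.

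Both you and the paper first reduce (via localization at $C=\{(y^n)^i\}$ and Lemma \ref{xxlem2.2}(4)) to the equality $d(A'/Z')=u\cdot R$ with $u\in (Z'')^\times$, and both identify $(Z'')^\times=T^\times\cdot (y^n)^{\mathbb Z}$, so that $u=c(y^n)^m$. The divergence is in how $m=0$ is forced. The paper invokes the symmetry between $x'$ and $y$ in the presentation of $A'$: repeating the whole localization argument with the roles of $x'$ and $y$ interchanged shows that the same ratio $\Phi=d(A'/Z')R^{-1}$ also lies in $T^\times\cdot((x')^n)^{\mathbb Z}$, and the intersection of these two groups inside the fraction field is $T^\times$. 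You instead appeal to Lemma \ref{xxlem1.5}: passing to the associated graded algebra $\gr A'=T_q[x',y]$ and using Proposition \ref{xxpro1.4}(2) pins down $\deg d(A'/Z')=2n^2(n-1)$, while the expression $c(y^n)^mR$ has filtration degree $2n^2(n-1)+nm$, forcing $m=0$. (This degree comparison goes through even for $m<0$, since then $(y^n)^{|m|}d(A'/Z')=cR$ in $Z'$ and additivity of degree in the domain $Z'$ gives the same equation.)

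Your approach has the advantage of being entirely self-contained within the tools already developed: the paper's ``by symmetry'' step tacitly requires rerunning the analysis of Lemma \ref{xxlem2.2} with $x'$ and $y$ swapped, which is routine but not written out. On the other hand, the paper's symmetry argument avoids checking the hypotheses of Lemma \ref{xxlem1.5} and computing $d(\gr A'/\gr Z')$ separately. Both methods are clean; yours is arguably more robust in situations where no such symmetry is available.
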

\begin{proof} 
Let $g$ be the embedding of $A'$ into $A''=A' C^{-1}$,
viewed as an inclusion. By Lemma \ref{xxlem1.2}, $g$ sends 
$d(A'/Z')$ to $d(A''/Z'')$. Combining with Lemma \ref{xxlem2.2}(4),
we have 
$$\begin{aligned}
d(A'/Z')&=_{(Z'')^{\times}}
g(d(A'/Z(A')))=_{(Z'')^{\times}} d(A''/Z'')\\
&=_{(Z'')^{\times}} n^{2n^2}((1-q )^{n} x^n y^n -a^{n} )^{n(n-1)}.
\end{aligned}
$$
Let $\Phi$ be the element $d(A'/Z') 
\{n^{2n^2}((1-q)^n x^n y^n-a^n)^{n(n-1)}\}^{-1}$, which can be viewed as an 
element in the quotient ring of $A'$. By the above equation,
$\Phi$ is in $(Z'')^{\times}$. Since $Z''=T[(x')^n, y^{\pm n}]$, $\Phi$ is 
of the form $\alpha y^{sn}$ for some $\alpha\in T^{\times}$ and 
some $s$. By symmetry, $\Phi$ is also of the form $\beta 
(x')^{tn}$ for some $\beta\in T^{\times}$ and some $t$. Hence
$s=t=0$, $\alpha=\beta\in T^{\times}$ and $\Phi=\alpha\in T^{\times}$. 
Therefore $d(A'/Z') = \alpha n^{2n^2}((1-q)^n x^n y^n-a^n)^{n(n-1)}$
and the assertion follows. 
\end{proof}

Now let
\begin{equation}
\label{E2.3.1}\tag{E2.3.1}
m := 
\prod_{i=2}^{n-1} (1 + q + \cdots + q^{i-1}).
\end{equation}
We can show that $n = (1 - q)^{n-1} m$ by factoring the polynomial 
$x^n - 1 \in T[x]$, dividing by $(x-1)$, then substituting $1$ for 
$x$ as follows:
$$x^n - 1 = \prod_{i=0}^{n-1} (x - q^i),$$
$$\sum_{i=0}^{n-1} x^i = \frac{x^n-1}{x-1} = \prod_{i=1}^{n-1} (x - q^i),$$
\begin{equation}
\label{E2.3.2}\tag{E2.3.2}
n = \prod_{i=1}^{n-1} (1 - q^i) = (1-q)^{n-1} \prod_{i=2}^{n-1} 
(1 + q + \cdots + q^{i-1}) = (1-q)^{n-1} m.
\end{equation}

Now we are ready to prove the main result of this
section, that also recovers Theorem \ref{xxthm0.1}.

\begin{theorem}
\label{xxthm2.4} Retain the above notation.
The discriminant of $A_q$ over its center $Z(A_q)$ is
$$d(A_q/Z(A_q)) =_{T^\times} (nm)^{n^2} ((1-q)^n x^n y^n-a^n)^{n(n-1)}.$$
\end{theorem}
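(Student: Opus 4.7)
The plan is to assemble Theorem \ref{xxthm2.4} directly from three ingredients already established: the relation between $d(A'/Z')$ and $d(A_q/Z(A_q))$ coming from the change of variables $x \mapsto x' = (1-q)x$ (Lemma \ref{xxlem2.1}), the closed formula for $d(A'/Z')$ (Lemma \ref{xxlem2.3}), and the factorization identity $n=(1-q)^{n-1}m$ from \eqref{E2.3.2}. No new computation on the level of the algebra $A_q$ is needed; everything reduces to bookkeeping with exponents.

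First, I would combine Lemma \ref{xxlem2.1}, which gives the exact equality
\[
d(A'/Z') = (1-q)^{n^2(n-1)}\, d(A_q/Z(A_q)),
\]
with Lemma \ref{xxlem2.3}, which gives
\[
d(A'/Z') =_{T^\times} n^{2n^2}\bigl((1-q)^n x^n y^n - a^n\bigr)^{n(n-1)}.
\]
This yields
\[
(1-q)^{n^2(n-1)}\, d(A_q/Z(A_q)) =_{T^\times} n^{2n^2}\bigl((1-q)^n x^n y^n - a^n\bigr)^{n(n-1)}.
\]

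Next I would apply \eqref{E2.3.2} in the form $n^{2n^2} = (1-q)^{2n^2(n-1)} m^{2n^2}$, substitute it into the right-hand side, and then cancel the common factor $(1-q)^{n^2(n-1)}$. This cancellation is legitimate because $T$ is a domain and $1-q\ne 0$ (since $q$ is a primitive $n$-th root of unity with $n\ge 2$), so $(1-q)^{n^2(n-1)}$ is a non-zero-divisor in $T$ and hence in $Z(A_q)$. After cancellation one obtains
\[
d(A_q/Z(A_q)) =_{T^\times} (1-q)^{n^2(n-1)} m^{2n^2}\bigl((1-q)^n x^n y^n - a^n\bigr)^{n(n-1)}.
\]

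Finally, regrouping the scalar prefactor as
\[
(1-q)^{n^2(n-1)} m^{2n^2} = \bigl((1-q)^{n-1} m\bigr)^{n^2} m^{n^2} = n^{n^2} m^{n^2} = (nm)^{n^2},
\]
again by \eqref{E2.3.2}, delivers the claimed formula. The only mildly delicate point is the cancellation step, but since the ambient ring is a commutative domain in which $1-q$ is nonzero, there is no real obstacle; the rest is strictly exponent arithmetic. Setting $T=k$ and $a=1$ immediately recovers Theorem \ref{xxthm0.1}.
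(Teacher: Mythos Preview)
Your proof is correct and follows essentially the same approach as the paper: combine Lemma \ref{xxlem2.1} and Lemma \ref{xxlem2.3}, then use \eqref{E2.3.2} and cancel the nonzero factor $(1-q)^{n^2(n-1)}$ in the domain. The only difference is cosmetic bookkeeping of the exponents---the paper rewrites $n^{2n^2}$ as $(nm(1-q)^{n-1})^{n^2}$ before cancelling, whereas you substitute $n=(1-q)^{n-1}m$ fully and regroup afterward.
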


\begin{proof}
Using Lemmas \ref{xxlem2.1} and \ref{xxlem2.3} and equation \eqref{E2.3.2}, 
we have
$$(1-q)^{n^2(n-1)} d(A_q/Z(A_q)) 
=_{T^\times} (nm(1-q)^{n-1})^{n^2} ((1-q)^n x^n y^n-a^n)^{n(n-1)}.$$
Since $A_q$ is a domain, we obtain that
$$d(A_q/Z(A_q)) =_{T^\times} (nm)^{n^2} ((1-q)^n x^n y^n-a^n)^{n(n-1)}.$$
\end{proof}

\begin{remark}
\label{xxrem2.5}\quad
\begin{enumerate}
\item[(1)]
By \cite[Lemma 2.7(7)]{CPWZ2}, the integer $n$ in Theorem \ref{xxthm2.4}
is nonzero in $T$. However $n$ and $m$ may not be invertible in general.
\item[(2)]
Theorem \ref{xxthm0.1} is clearly a consequence of
Theorem \ref{xxthm2.4}.
\end{enumerate}
\end{remark}

A slight generalization of Theorem \ref{xxthm2.4} is the following.

\begin{theorem}
\label{xxthm2.6}
Let $T$ be a commutative domain and $q\in T^{\times}$ be a primitive
$n$-th root of unity. Let 
$B$ be the $T$-algebra of the form
$$\frac{T\langle x, y\rangle}{(yx-qxy=a, x^n=b, y^n=c)}$$
where $a,b,c\in T$. Suppose that $B$ is a free module over $T$ with 
basis $\{x^i y^j \mid 0\leq i,j \leq n-1\}$. Then $d(B/T)=_{T^{\times}}
(nm)^{n^2}((1-q)^n x^n y^n-a^n)^{n(n-1)}$, where $m$ is given in \eqref{E2.3.1}.
\end{theorem}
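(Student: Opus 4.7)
The plan is to realize $B$ as a quotient of the $q$-quantum Weyl algebra $A_q = T\langle x,y\rangle/(yx-qxy-a)$ studied earlier in this section, and then invoke Lemma~\ref{xxlem1.2} together with Theorem~\ref{xxthm2.4}. The observation is that the relations $x^n = b$ and $y^n = c$ are imposed on central elements of $A_q$, so we are simply specializing the central variables $x^n, y^n$ to the scalars $b, c \in T$.

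First I would let $g \colon A_q \to B$ be the canonical surjective $T$-algebra homomorphism, which is well-defined because the relation $yx = qxy + a$ holds in $B$. Recall that $Z(A_q) = T[x^n, y^n]$, and $A_q$ is free over $Z(A_q)$ of rank $n^2$ with basis $\mathcal{B} = \{x^i y^j \mid 0 \leq i, j \leq n-1\}$. Under $g$, the center $T[x^n, y^n]$ is sent into $T \subseteq B$ via $x^n \mapsto b$ and $y^n \mapsto c$, and $g(\mathcal{B})$ is exactly the assumed $T$-basis of $B$. Since the ranks coincide, both hypotheses (i) and (ii) of Lemma~\ref{xxlem1.2} are satisfied.

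Applying Lemma~\ref{xxlem1.2} then gives $g(d(A_q/Z(A_q))) =_{T^{\times}} d(B/T)$. Substituting the formula from Theorem~\ref{xxthm2.4} and noting that $g$ sends $x^n y^n$ to $bc \in T$, which equals $x^n y^n$ in $B$, yields
$$d(B/T) =_{T^{\times}} (nm)^{n^2}\bigl((1-q)^n x^n y^n - a^n\bigr)^{n(n-1)},$$
as desired. There is no real obstacle: the result is essentially an immediate specialization of Theorem~\ref{xxthm2.4} via the transfer principle of Lemma~\ref{xxlem1.2}, and the only item to verify is that $g$ carries the $Z(A_q)$-basis of $A_q$ to the prescribed $T$-basis of $B$, which is automatic from the construction.
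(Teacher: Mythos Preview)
Your proposal is correct and follows essentially the same route as the paper: both define the natural surjection $g\colon A_q\to B$, verify the hypotheses of Lemma~\ref{xxlem1.2}, and then apply Theorem~\ref{xxthm2.4}. The paper additionally remarks that $T$ is in fact the center of $B$, but this is not needed for the discriminant computation itself.
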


\begin{proof} First note it is well-known and easy to check that $T$ 
is the center of $B$. 

Recall that $A_q$ is the algebra of the form 
$T\langle x, y\rangle/(yx-qxy=a)$. There is a 
natural algebra homomorphism $g$ from $A_q$ to $B$ sending $x$
to $x$ and $y$ to $y$ and $t\in T$ to $t\in T$. Then the 
hypotheses in Lemma \ref{xxlem1.2} hold. By Lemma 
\ref{xxlem1.2}, $g(d(A_q/Z(A_q)))= d(B/T)$. Now the assertion
follows from Theorem \ref{xxthm2.4}.
\end{proof}

\section{Discriminant of Clifford algebras}
\label{xxsec3}

In this section we assume that $2^{-1}\in k$. We fix an integer 
$n\geq 2$. 

Let $T$ be a commutative domain and let ${\mathcal A}:=
\{ a_{ij}\mid 1\leq i<j\leq n\}$ be a set of scalars in $T$. 
We write $a_{ji}=a_{ij}$ if $i<j$. Let 
$\VA$ be the $T$-algebra generated by $\{x_1,\cdots,x_n\}$ 
subject to the relations
$$x_ix_j+x_jx_i=a_{ij}, \; \forall\;  i\neq j.$$
This algebra was studied in \cite{CPWZ1, CPWZ3}. Some 
basic properties of $\VA$ are given in \cite[Section 4]{CPWZ1}.
Let $M_1$ be the matrix
\begin{equation}
\label{E3.0.1}\tag{E3.0.1}
M_1:=\begin{pmatrix}
2x_1^2 & a_{12} & \cdots & a_{1n}\\
a_{21} &2x_2^2& \cdots &  a_{2n}\\
\vdots & \vdots &\cdots &\vdots\\
a_{n1}& a_{n2} & \cdots & 2x_n^2 \end{pmatrix}.
\end{equation}
This is a symmetric matrix
with entries in $Z:=T[x_1^2,\cdots,x_n^2]$. We will define a sequence of 
matrices $M_i$ later. Note that $Z$ is a central
subalgebra of $\VA$. If we write $M_1=(m_{ij,1})_{n\times n}$, then
$m_{ij,1}=x_jx_i+x_ix_j$ for all $i,j$. 

The algebra $V_n({\mathcal A})$ is a Clifford algebra over $Z$.
We will recall the definition of the Clifford algebra associated to a 
quadratic form in the second half of this section. In the next few lemmas, we 
are basically diagonalizing the quadratic form, which is elementary 
and well-known in the classical case, see 
\cite[Chapter I, Corollary 2.4]{La} for some related material.
Since we need some explicit construction to complete the 
proof of our main result, details will be provided below.

We will introduce a sequence of new variables starting with
$$x_{i,1}=x_i, \; \forall \; i=1,\cdots,n,$$
and
$$a_{ij,1}=a_{ij}, \; \forall\; i\neq j, \quad {\text{and}}\quad
a_{ii,1}= 2x_i^2, \; \forall \; i.$$
So we have $x_{j,1}x_{i,1}+x_{i,1}x_{j,1}=a_{ij,1}$ for
all $i,j$. Let 
\begin{equation}
\label{E3.0.2}\tag{E3.0.2}
{\text{$x_{1,2}:=x_{1,1}$ and 
$x_{i,2}:=x_{i,1}-\frac{1}{2} a_{1i,1} x_{1,1}^{-2} x_{1,1}$ 
for all $i\geq 2$.}}
\end{equation}

\begin{lemma}
\label{xxlem3.1} Retain the notation as above.
\begin{enumerate}
\item[(1)]
$x_{i,2} x_{1,2}+x_{1,2} x_{i,2}=0$ for all $i\geq 2$.
\item[(2)]
$x_{i,2}^2=x_{i,1}^2-\frac{1}{4}a_{1i,1}^2 x_{1,1}^{-2}$ for all $i\geq 2$.
\item[(3)]
$x_{i,2}x_{j,2}+x_{j,2}x_{i,2}=a_{ij,1}-\frac{1}{2} 
a_{1i,1}a_{1j,1} x_{1,1}^{-2}$ for all $2\leq i<j\leq n$.
\item[(4)]
Let $M_2$ be the matrix $( x_{i,2}x_{j,2}+x_{j,2}x_{i,2})_{1\leq i,j\leq n}$.
Then $\det M_2=\det M_1$.
\item[(5)]
Let $C_1=\{x_{1,1}^{2i}\}_{i\geq 1}$. Then the localization $\VA[C_1^{-1}]$
is free over $Z[C_1^{-1}]$ with basis $\{x_{1,2}^{d_1}\cdots x_{n,2}^{d_n}
\mid d_s=0,1\}$. 
\end{enumerate}
\end{lemma}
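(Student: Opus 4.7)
The overall plan is to prove (1)--(3) by direct computation in the localization $V_n({\mathcal A})[C_1^{-1}]$ using a single derived identity, deduce (4) by elementary row and column operations on $M_1$, and establish (5) via a universal-property argument for Clifford algebras. The bulk of the work is bookkeeping; the only step requiring extra input is (5).

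For (1)--(3), write $u=x_{1,1}$, $y_i=x_{i,1}$, and $\alpha_i=a_{1i,1}$, so that $uy_i+y_iu=\alpha_i$ lies in the central subring. Multiplying the relation $uy_i+y_iu=\alpha_i$ on both sides by $u^{-1}$ yields the derived identity
\[
u^{-1}y_i+y_iu^{-1}=\alpha_i u^{-2}
\]
in $V_n({\mathcal A})[C_1^{-1}]$. Substituting $x_{i,2}=y_i-\tfrac{1}{2}\alpha_i u^{-1}$ into each of $x_{i,2}x_{1,2}+x_{1,2}x_{i,2}$, $x_{i,2}^2$, and (for $2\leq i<j$) $x_{i,2}x_{j,2}+x_{j,2}x_{i,2}$, and applying this single identity, produces precisely the formulas asserted in (1), (2), and (3). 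These are straightforward expansions.

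Part (4) is then pure linear algebra inside the commutative subring $Z[C_1^{-1}]$: by (1)--(3), the matrix $M_2$ is block-diagonal with a $1\times 1$ block $(2x_{1,1}^2)$ and an $(n-1)\times(n-1)$ lower-right block whose $(i,j)$-entries are $2x_{i,1}^2-\tfrac{1}{2}a_{1i,1}^2 x_{1,1}^{-2}$ on the diagonal and $a_{ij,1}-\tfrac{1}{2}a_{1i,1}a_{1j,1}x_{1,1}^{-2}$ off the diagonal. Performing on $M_1$ the operations ``subtract $\tfrac{a_{1i,1}}{2x_{1,1}^2}$ times row~$1$ from row~$i$'' for each $i\geq 2$ followed by the analogous column operations produces exactly this block-diagonal matrix; since such elementary operations preserve the determinant over a commutative ring, $\det M_1=\det M_2$.

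For (5), the starting point is that by \cite[Section~4]{CPWZ1} the Clifford algebra $V_n({\mathcal A})$ is free over $Z$ of rank $2^n$ on the squarefree monomials in the $x_{i,1}$, so $V_n({\mathcal A})[C_1^{-1}]$ is free of rank $2^n$ over $Z[C_1^{-1}]$. Let $\widetilde{C}$ denote the universal $Z[C_1^{-1}]$-Clifford algebra on generators $\hat{x}_1,\ldots,\hat{x}_n$ imposing the relations proved in (1)--(3); the standard Clifford PBW theorem over a commutative base makes $\widetilde{C}$ free of rank $2^n$ on the squarefree monomials $\hat{x}_1^{d_1}\cdots\hat{x}_n^{d_n}$. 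The identity $x_{i,1}=x_{i,2}+\tfrac{a_{1i,1}}{2x_{1,1}^2}\,x_{1,2}$ for $i\geq 2$ shows that the $x_{i,2}$'s together with $Z[C_1^{-1}]$ generate $V_n({\mathcal A})[C_1^{-1}]$, so the canonical map $\widetilde{C}\to V_n({\mathcal A})[C_1^{-1}]$ sending $\hat{x}_i\mapsto x_{i,2}$ is surjective; a surjection between finitely generated free modules of equal rank over a commutative ring is an isomorphism, yielding the desired basis. The main conceptual obstacle here is verifying the Clifford PBW theorem in the required generality over the non-principal commutative domain $Z[C_1^{-1}]$; once the diagonalization accomplished in (4) is in hand this is standard, but it is the point at which an outside input needs to be invoked rather than computed.
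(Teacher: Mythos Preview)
Your argument is correct. Parts (1)--(4) coincide with the paper's proof: the paper also says (1)--(3) are direct computations, and for (4) it writes down the explicit unipotent matrix $N$ with first column $(1,-\tfrac{1}{2}a_{12,1}x_{1,1}^{-2},\ldots,-\tfrac{1}{2}a_{1n,1}x_{1,1}^{-2})^{T}$ and checks $NM_{1}N^{T}=M_{2}$, which is exactly your row--column operation rephrased.

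For (5) you take a genuinely different route. The paper argues by a direct change of basis: since each $x_{i,2}$ differs from $x_{i,1}$ by a $Z[C_{1}^{-1}]$-multiple of $x_{1,1}$, the squarefree monomials $x_{1,2}^{d_{1}}\cdots x_{n,2}^{d_{n}}$ expand in the old basis $\{x_{1,1}^{d_{1}}\cdots x_{n,1}^{d_{n}}\}$ via a unitriangular matrix over $Z[C_{1}^{-1}]$, so they automatically form a basis. This avoids any appeal to a PBW theorem. Your approach---build the universal Clifford algebra $\widetilde{C}$ on the relations from (1)--(3), invoke Clifford PBW to get $\widetilde{C}$ free of rank $2^{n}$, then use the equal-rank surjection trick---is cleaner conceptually and generalizes well, but imports more machinery. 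One small remark: your closing sentence suggests that the ``diagonalization accomplished in (4)'' makes the PBW step standard, but (1)--(3) only clear the first row and column of the Gram matrix, not the whole thing; you still need the PBW theorem for a general symmetric bilinear form over a commutative ring (which is of course available), not just the diagonal case. The paper's unitriangular change-of-basis sidesteps this entirely.
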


\begin{proof} (1,2,3) Follows by direct computation.

(4) Let $N$ be the matrix 
$$\begin{pmatrix} 1 & 0 &0 &\cdots &0\\
-\frac{1}{2} a_{12,1} x_{1,1}^{-2} & 1 & 0& \cdots &0\\
-\frac{1}{2} a_{13,1} x_{1,1}^{-2} & 0 & 1& \cdots &0\\
\vdots & \vdots &\vdots &\cdots & \vdots\\
-\frac{1}{2} a_{1n,1} x_{1,1}^{-2} & 0 & 0& \cdots &1
\end{pmatrix}.
$$ By linear algebra and part (3), one can check that
$N M_1 N^T=M_2$. Since $\det N=1$, we have $\det M_2=\det M_1$.

(5) First of all $\VA$ is free over $Z$ with basis 
$\{x_{1,1}^{d_1}\cdots x_{n,1}^{d_n} \mid d_s=0,1\}$. In the localization
$\VA[C^{-1}]$, this basis can be transformed to a basis 
$\{x_{1,2}^{d_1}\cdots x_{n,2}^{d_n} \mid d_s=0,1\}$ by using
\eqref{E3.0.2}.
\end{proof}

After we have $x_{i,2}$, define $a_{ij,2}$ to be $x_{i,2}x_{j,2}
+x_{j,2}x_{i,2}$ for all $i,j$. 
Now we define $x_{i,s}$ and $a_{ij,s}$ inductively. 

\begin{definition}
\label{xxdef3.2} Let $s\geq 3$ and suppose that
$x_{i,s-1}$ and $a_{ij,s-1}$ are defined inductively. 
Define 
\begin{equation}
\label{E3.2.1}\tag{E3.2.1}
x_{i,s}:=x_{i,s-1}, \; \forall\; i<s \quad
{\text{and}}\quad 
x_{i,s}:=x_{i,s-1}-\frac{1}{2} a_{s-1 i, s-1} x_{s-1,s-1}^{-1}, \;
\forall i\geq s.
\end{equation}
Define $a_{ij,s}:=x_{i,s}x_{j,s}+x_{j,s}x_{i,s}$ for all $i,j$.
\end{definition}

Similar to Lemma \ref{xxlem3.1}, we have the following lemma. Its proof is also
similar to the proof of Lemma \ref{xxlem3.1}, so is omitted.

\begin{lemma}
\label{xxlem3.3} Retain the notation as above. Let $2\leq s\leq n$.
\begin{enumerate}
\item[(1)]
$x_{i,s} x_{j,s}+x_{j,s} x_{i,s}=0$ for all $i<j$ and $i<s$.
\item[(2)]
$x_{i,s}=x_{i,s-1}$ if $i<s$ and 
$x_{i,s}^2=x_{i,s-1}^2-\frac{1}{4}a_{s-1i,s-1}^2 x_{s-1,s-1}^{-2}$ for all 
$i\geq s$.
\item[(3)]
$x_{i,s}x_{j,s}+x_{j,s}x_{i,s}=a_{ij,s-1}-\frac{1}{2} 
a_{s-1i,s-1}a_{s-1j,s-1} x_{s-1,s-1}^{-2}$ for all $s\leq i<j\leq n$.
\item[(4)]
Let $M_s$ be the matrix $( x_{i,s}x_{j,s}+x_{j,s}x_{i,s})_{1\leq i,j\leq n}$.
Then $\det M_s=\det M_1$.
\item[(5)]
Let $C_{s-1}$ be the Ore set 
$\{x_{1,1}^{2i_1}x_{2,2}^{2i_1}\cdots x_{s-1,s-1}^{2i_{s-1}}
\}_{i_1,\cdots,i_{s-1}\geq 1}$. Then the localization $\VA[C_{s-1}^{-1}]$
is free over $Z[C_{s-1}^{-1}]$ with basis $\{x_{1,s}^{d_1}\cdots x_{n,s}^{d_n}
\mid d_s=0,1\}$. 
\end{enumerate}
\end{lemma}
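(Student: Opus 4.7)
The plan is to prove all five parts by induction on $s$, with the base case $s=2$ supplied by Lemma \ref{xxlem3.1}. Note that the transformation \eqref{E3.2.1} defining $x_{i,s}$ from $x_{i,s-1}$ is formally identical to \eqref{E3.0.2} but with the pivot index shifted from $1$ to $s-1$; thus the inductive step essentially reruns the argument of Lemma \ref{xxlem3.1} with $x_{1,1}$ replaced by $x_{s-1,s-1}$ acting on the ``unreduced'' block of variables $x_{s-1,s-1},\ldots,x_{n,s-1}$, which by the inductive hypothesis already anticommute with the previously diagonalized variables $x_{1,s-1},\ldots,x_{s-2,s-1}$.

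For part (1), when $i<s-1$ the inductive hypothesis at level $s-1$ gives $x_{i,s-1}x_{j,s-1}+x_{j,s-1}x_{i,s-1}=0$ for all $j>i$, and in particular $x_{i,s-1}$ anticommutes with $x_{s-1,s-1}$ and hence with $x_{s-1,s-1}^{-1}$ inside $\VA[C_{s-1}^{-1}]$. Since $x_{j,s}$ differs from $x_{j,s-1}$ only by a $Z$-multiple of $x_{s-1,s-1}^{-1}$, the anticommutation persists after the substitution. For $i=s-1$, the claim is a direct computation identical in form to Lemma \ref{xxlem3.1}(1). Parts (2) and (3) are similarly direct computations mirroring Lemma \ref{xxlem3.1}(2,3) verbatim but with indices shifted.

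For part (4), let $N_s$ be the $n\times n$ matrix equal to the identity except that the $(s-1)$-th column has entries $-\tfrac{1}{2}a_{s-1\,i,s-1}x_{s-1,s-1}^{-2}$ in row $i$ for $i\geq s$. One checks (using parts (1)--(3) at level $s$) that $N_s M_{s-1} N_s^T = M_s$, and because $\det N_s = 1$ the inductive hypothesis gives $\det M_s = \det M_{s-1} = \det M_1$. For part (5), the transformation \eqref{E3.2.1} is, over $Z[C_{s-1}^{-1}]$, an invertible change of free generators, so it sends the basis $\{x_{1,s-1}^{d_1}\cdots x_{n,s-1}^{d_n}\}$ supplied by the inductive hypothesis to $\{x_{1,s}^{d_1}\cdots x_{n,s}^{d_n}\}$.

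The one piece of real bookkeeping, which I expect to be the main obstacle, is verifying two commutativity/centrality claims needed to justify the inductive setup: first, that $x_{s-1,s-1}$ is central enough for $x_{s-1,s-1}^{-1}$ to behave like a scalar in the formula defining $x_{i,s}$ (this reduces to part (1) at level $s-1$, which gives anticommutation of $x_{s-1,s-1}$ with every $x_{j,s-1}$ for $j\neq s-1$, hence commutation of $x_{s-1,s-1}^2$ with all generators); and second, that $x_{s-1,s-1}^2$ actually lies in $Z[C_{s-2}^{-1}]$ so that enlarging $C_{s-2}$ to $C_{s-1}$ is a well-defined iterated Ore localization (this follows from part (2) applied iteratively, expressing $x_{s-1,s-1}^2$ in terms of $x_i^2$ and the previously inverted squares $x_{1,1}^2,\ldots,x_{s-2,s-2}^2$). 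Once these are in place, each of (1)--(5) reduces to the corresponding step in Lemma \ref{xxlem3.1}.
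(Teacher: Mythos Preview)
Your proposal is correct and takes essentially the same approach as the paper: the paper itself omits the proof entirely, saying only that it is ``similar to the proof of Lemma~\ref{xxlem3.1}.'' Your write-up makes the induction explicit and even handles the bookkeeping (centrality of $x_{s-1,s-1}^{2}$ and well-definedness of the iterated localization) that the paper leaves implicit, so if anything you have supplied more detail than the authors did.
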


We need two more lemmas before we prove the main result.

\begin{lemma}
\label{xxlem3.4} Let $T$ be a commutative domain. Let $A$ be a
$T$-algebra containing $T$ as a subalgebra,  
generated by $x_1,\cdots, x_n$ and satisfying the relations
$x_j x_i+x_ix_j=0$ for all $i<j$ and $x_i^2=a_i\in T$. Suppose that 
$A$ is a free module over $T$ with basis $\{x_{1}^{d_1}\cdots x_{n}^{d_n}
\mid d_s=0,1\}$. Then 
$$d(A/T)=_{T^{\times}}
(\prod_{i=1}^{n} (2 x_i^2))^{2^{n-1}}=_{T^{\times}}
(\prod_{i=1}^{n} (x_i^2))^{2^{n-1}}.$$
\end{lemma}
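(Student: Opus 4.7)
The plan is to compute the full trace-pairing matrix $M=(\tr(b_{\mathbf{d}} b_{\mathbf{d}'}))$ directly in the given $T$-basis $B = \{b_{\mathbf{d}} := x_1^{d_1}\cdots x_n^{d_n} \mid \mathbf{d} = (d_1,\ldots,d_n) \in \{0,1\}^n\}$, to show $M$ is diagonal, and then read off its determinant. First I would note that, using only the anticommutation relations $x_j x_i = -x_i x_j$ together with $x_i^2 = a_i$, an elementary shuffle of $b_{\mathbf{d}} b_{\mathbf{d}'}$ yields
$$b_{\mathbf{d}}\, b_{\mathbf{d}'} \;=\; \varepsilon(\mathbf{d},\mathbf{d}')\, \Bigl(\prod_{i\,:\, d_i = d'_i = 1} a_i \Bigr)\, b_{\mathbf{d} \oplus \mathbf{d}'},$$
where $\mathbf{d} \oplus \mathbf{d}'$ denotes componentwise XOR and $\varepsilon(\mathbf{d},\mathbf{d}') \in \{\pm 1\}$ is a reordering sign whose exact value will be irrelevant.

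Next I would compute the trace of a single basis element $b_{\mathbf{e}}$: the formula above says that left multiplication by $b_{\mathbf{e}}$ sends every $b_{\mathbf{d}}$ to a $T$-scalar multiple of $b_{\mathbf{e}\oplus \mathbf{d}}$, which differs from $b_{\mathbf{d}}$ unless $\mathbf{e}=\mathbf{0}$. Therefore the matrix of $l_{b_{\mathbf{e}}}$ in the basis $B$ has zero diagonal for $\mathbf{e}\ne \mathbf{0}$, giving $\tr(b_{\mathbf{e}})=0$; and $\tr(1)=|B|=2^n$. Substituting into the product formula,
$$\tr(b_{\mathbf{d}}\, b_{\mathbf{d}'}) \;=\; \begin{cases} \pm\, 2^n \displaystyle\prod_{i\,:\, d_i = 1} a_i & \text{if } \mathbf{d} = \mathbf{d}', \\ 0 & \text{otherwise,} \end{cases}$$
so $M$ is diagonal.

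Finally I would take the determinant by counting: the factor $a_i$ appears in the $\mathbf{d}$-th diagonal entry precisely when $d_i=1$, i.e., in exactly $2^{n-1}$ of the $2^n$ entries, so
$$d(A/T) \;=\; \pm\, (2^n)^{2^n} \prod_{i=1}^n a_i^{2^{n-1}}.$$
Since $2\in k^{\times}\subseteq T^{\times}$, the factor $(2^n)^{2^n}$ is a unit of $T$, giving $d(A/T)=_{T^{\times}} \prod_{i=1}^n a_i^{2^{n-1}}$; inserting or removing the unit $2^{n\cdot 2^{n-1}}$ converts this freely between the two stated forms $(\prod_i 2x_i^2)^{2^{n-1}}$ and $(\prod_i x_i^2)^{2^{n-1}}$. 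The only non-routine point is confirming that $M$ is literally diagonal, which is exactly the vanishing of $\tr(b_{\mathbf{e}})$ for $\mathbf{e}\ne \mathbf{0}$; the signs $\varepsilon(\mathbf{d},\mathbf{d}')$ never need to be tracked because we work modulo $T^{\times}$.
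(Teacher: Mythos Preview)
Your proof is correct, but it takes a different route from the paper's. The paper does not compute the trace pairing in $A$ directly; instead it invokes the already-known discriminant of the skew polynomial ring $B=T_{-1}[x_1,\ldots,x_n]$ over $Z=T[x_1^2,\ldots,x_n^2]$ (Proposition~\ref{xxpro1.4}(3), quoted from \cite{CPWZ2}) and pushes it forward through the obvious surjection $g:B\to A$, $x_i\mapsto x_i$, using the transfer principle Lemma~\ref{xxlem1.2}. Your argument is more self-contained: you recompute the trace pairing from scratch, observe that $M$ is literally diagonal because $\tr(b_{\mathbf e})=0$ for $\mathbf e\neq\mathbf 0$, and read off the determinant by a simple count. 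This avoids any appeal to \cite{CPWZ2} and makes Lemma~\ref{xxlem3.4} independent of Proposition~\ref{xxpro1.4}, at the cost of reproducing (in the quotient) the same elementary trace computation that underlies that proposition. The paper's approach, by contrast, is shorter given the available machinery and is in keeping with the pattern used throughout Section~\ref{xxsec3}, where repeated applications of Lemma~\ref{xxlem1.2} reduce everything to the skew polynomial case.
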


\begin{proof} Let $B=T_{-1}[x_1,\cdots,x_n]$ and $Z=T[x_1^2,\cdots,x_n^2]$.
Then $B$ is a free module over $Z$ with basis $\{x_{1}^{d_1}\cdots x_{n}^{d_n}
\mid d_s=0,1\}$. Let $g$ be the algebra map from $B$ to $A$ sending
$T$ to $T$, $x_i$ to $x_i$. Then the hypotheses in
Lemma \ref{xxlem1.2} holds. By Lemma \ref{xxlem1.2},
$g(d(B/Z))=_{T^\times} d(A/T)$. Note that $d(B/Z)$ was computed in
Proposition \ref{xxpro1.4}(3) to be
$(\prod_{i=1}^{n} (2 x_i^2))^{2^{n-1}}$ as we assume that $2$ is invertible. 
Now the assertion follows.
\end{proof}

Let $A$ be an Ore domain and let $Q(A)$ denote the skew field of fractions of
$A$. Let $Z$ be the commutative subalgebra $T[x_1^2,\cdots,x_n^2]\subset 
\VA$. For each $1\leq 1\leq n$, let $Z_i$ be the subring of 
$Q(Z)$ of the form $Q(T[x_1^2,\cdots, \widehat{x_i^2}, \cdots, x_n^2])[x_i^2]$.

\begin{lemma}
\label{xxlem3.5}
Retain the above notation.
\begin{enumerate}
\item[(1)]
$\bigcap_{i=1}^n Z_i=Q(T)[x_1^2,\cdots,x_n^2]$.
\item[(2)]
$Z[C_{n-1}^{-1}]\subseteq Z_n$ where $Z[C_{n-1}^{-1}]$
is defined in Lemma \ref{xxlem3.3}(5).
\end{enumerate}
\end{lemma}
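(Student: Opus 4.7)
The plan is to work inside the fraction field $F := Q(Z) = Q(T)(x_1^2,\ldots,x_n^2)$ of the commutative polynomial ring $Z = T[x_1^2,\ldots,x_n^2]$, and to set $R := Q(T)[x_1^2,\ldots,x_n^2]$ and $K_i := Q(T)(x_j^2 : j \neq i)$, so that $Z_i = K_i[x_i^2]$ is a PID with fraction field $F$. Both statements then reduce to elementary divisibility and rational-expression arguments inside $F$.

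For (1), the inclusion $R \subseteq \bigcap_i Z_i$ is immediate. For the reverse, I would take $f \in \bigcap_i Z_i$ and write $f = p/q$ with $p, q \in R$ coprime (using that $R$ is a UFD). For each $i$, the hypothesis $f \in Z_i$ lets us clear denominators by some nonzero $c_i \in Q(T)[x_j^2 : j \neq i]$, giving $c_i f \in R$; since $\gcd(p,q)=1$, it follows that $q \mid c_i$ in $R$. A short degree-in-$x_i^2$ argument shows every irreducible factor of $c_i$ lies in $Q(T)[x_j^2 : j \neq i]$, so $q$ does not involve $x_i^2$. Running this over all $i$ forces $q \in Q(T)^\times$, whence $f \in R$.

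For (2), since $Z \subseteq Z_n$ is immediate, it suffices to show $x_{j,j}^{-2} \in Z_n$ for $1 \leq j \leq n-1$. I would prove by induction on $j$ that, viewed inside $F$, $x_{j,j}^2$ lies in $Q(T)(x_1^2,\ldots,x_j^2)$ and is monic of degree one as a polynomial in $x_j^2$ over $Q(T)(x_1^2,\ldots,x_{j-1}^2)$. The inductive step uses Lemma \ref{xxlem3.3}(2) together with Lemma \ref{xxlem3.3}(3): iterating the latter shows each $a_{s-1,j,s-1}$ involves only $x_k^2$ with $k \leq s-2$, so the correction term in Lemma \ref{xxlem3.3}(2) contributes nothing to the coefficient of $x_j^2$. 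This yields $x_{j,j}^2 = x_j^2 + g_j$ for some $g_j \in Q(T)(x_1^2,\ldots,x_{j-1}^2)$, so $x_{j,j}^2 \neq 0$ and $x_{j,j}^{-2} \in Q(T)(x_1^2,\ldots,x_j^2) \subseteq Q(T)(x_1^2,\ldots,x_{n-1}^2) \subseteq Z_n$. I expect the main obstacle to be this inductive bookkeeping, specifically tracking which variables occur in $a_{s-1,j,s-1}$ via the nested recursion in Lemma \ref{xxlem3.3}(3); once the monic-of-degree-one form of $x_{j,j}^2$ is established, both nonvanishing and the desired containment are automatic, and (1) reduces to a clean UFD divisibility argument.
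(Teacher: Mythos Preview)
Your proposal is correct and follows essentially the same route as the paper. The paper dispatches (1) as ``an easy commutative algebra fact'' and handles (2) with the one-line remark ``by Lemma~\ref{xxlem3.3}(2) and induction, each $x_{i,s}^2$ \ldots\ is in $Q(T[x_1^2,\ldots,x_{n-1}^2])$''; your UFD/divisibility argument for (1) and your simultaneous induction (tracking both the diagonal $x_{i,s}^2$ via Lemma~\ref{xxlem3.3}(2) and the off-diagonal $a_{ij,s}$ via Lemma~\ref{xxlem3.3}(3)) for (2) are exactly a detailed execution of these sketches. Your observation that $x_{j,j}^2$ is monic of degree one in $x_j^2$ is a mild strengthening that the paper omits but which cleanly handles the nonvanishing needed to invert $x_{j,j}^2$; this is a nice touch rather than a genuinely different idea.
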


\begin{proof} (1) This is an easy commutative algebra fact.

(2) By Lemma \ref{xxlem3.3}(2) and induction, each $x_{i,s}^2$,
for all $1\leq i<n$ and all $1\leq s\leq n$, is in 
$Q(T[x_1^2,\cdots,x_{n-1}^2])$. So $Z[C_{n-1}^{-1}]\subseteq Z_n$.

\end{proof}

\begin{theorem}
\label{xxthm3.6} Suppose $2$ is invertible. Let
$Z=T[x_1^2,\cdots,x_n^2]$. Then 
$$d(\VA/Z)=_{T^{\times}}(\det M_1)^{2^{n-1}}$$
where $M_1$ is given in \eqref{E3.0.1}.
\end{theorem}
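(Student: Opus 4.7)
The plan is to diagonalize the Clifford form in a suitable localization, compute the discriminant there using Lemma~\ref{xxlem3.4}, and then descend back to $(\VA, Z)$. Let $A'' := \VA[C_{n-1}^{-1}]$ and $Z'' := Z[C_{n-1}^{-1}]$. Lemma~\ref{xxlem1.3} gives $d(A''/Z'') =_{(Z'')^{\times}} d(\VA/Z)$. Specializing Lemma~\ref{xxlem3.3} to $s=n$: the elements $x_{1,n},\ldots,x_{n,n}$ pairwise anticommute by part~(1) (the restriction $i<s$ is automatic for $s=n$ since any pair $i<j\leq n$ has $i<n$), each $x_{i,n}^2$ lies in $Z''$ by part~(2) and induction, and $\{x_{1,n}^{d_1}\cdots x_{n,n}^{d_n}\mid d_s=0,1\}$ is a free $Z''$-basis of $A''$ by part~(5). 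These match the hypotheses of Lemma~\ref{xxlem3.4}, so
\[
d(A''/Z'') =_{(Z'')^{\times}} \Bigl(\prod_{i=1}^n x_{i,n}^2\Bigr)^{2^{n-1}}.
\]

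Because $M_n$ is diagonal with entries $2x_{i,n}^2$ (the off-diagonals vanish by the anticommutation), $\det M_n = 2^n\prod x_{i,n}^2$, and this equals $\det M_1$ by Lemma~\ref{xxlem3.3}(4). Since $2$ is invertible, this yields
\[
d(\VA/Z) = c\,(\det M_1)^{2^{n-1}} \quad\text{for some } c \in (Z'')^{\times}.
\]

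To upgrade this to $c \in T^{\times}$, I would exploit the symmetry of the Clifford relations under permutations of the indices $1,\ldots,n$. For each $i$, re-running the orthogonalization with $x_i$ placed first produces an analogous Ore set $C^{(i)}_{n-1}$, and the same argument forces $c \in Z[(C^{(i)}_{n-1})^{-1}]^{\times}$. The permuted analog of Lemma~\ref{xxlem3.5}(2) gives $Z[(C^{(i)}_{n-1})^{-1}] \subseteq Z_i$, so $c \in Z_i^{\times}$. Since $Z_i = K_i[x_i^2]$ is a polynomial ring over the field $K_i := Q(T[x_1^2,\ldots,\widehat{x_i^2},\ldots,x_n^2])$, its units are $K_i^{\times}$, so $c$ has no $x_i^2$-dependence; intersecting over all $i$ and invoking Lemma~\ref{xxlem3.5}(1) yields $c \in Q(T)^{\times}$.

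Finally, a leading-term comparison in the polynomial ring $T[x_1^2,\ldots,x_n^2]$ pins down $c \in T^{\times}$: the top-degree term of $(\det M_1)^{2^{n-1}}$ is $2^{n\cdot 2^{n-1}}\prod x_i^{2^n}$ (since $\det M_1$ has leading term $2^n\prod x_i^2$), while Lemma~\ref{xxlem1.5} combined with $\gr \VA \cong T_{-1}[x_1,\ldots,x_n]$ and Proposition~\ref{xxpro1.4}(3) give that $d(\VA/Z)$ has top term $2^{n\cdot 2^n}\prod x_i^{2^n}$ up to $T^{\times}$. Matching leading coefficients forces $c \in T^{\times}$ (using invertibility of $2$). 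The most delicate step I expect is the symmetry-based descent from $(Z'')^{\times}$ to $Q(T)^{\times}$: one must confirm that the orthogonalization procedure and the containment in Lemma~\ref{xxlem3.5}(2) behave equivariantly under permutation of the indices, so that intersecting over all $i$ cleanly eliminates each $x_i^2$-dependence. The remaining diagonalization and leading-term arguments are direct applications of the preceding lemmas.
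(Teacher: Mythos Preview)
Your argument is essentially the paper's own: localize at $C_{n-1}$, diagonalize via Lemmas~\ref{xxlem3.3} and~\ref{xxlem3.4}, identify the result with $(\det M_1)^{2^{n-1}}$ via Lemma~\ref{xxlem3.3}(4), descend to $Q(T)^\times$ by the symmetry/intersection argument of Lemma~\ref{xxlem3.5}, and pin down the unit with Lemma~\ref{xxlem1.5} and Proposition~\ref{xxpro1.4}(3). One correction in the symmetry step: to obtain $Z[(C_{n-1}^{(i)})^{-1}]\subseteq Z_i$ you must place $x_i$ \emph{last} in the orthogonalization order, not first, since the Ore set $C_{n-1}$ is built from $x_{1,1}^2,\ldots,x_{n-1,n-1}^2$ (which by Lemma~\ref{xxlem3.3}(2) lie in $Q(T[x_1^2,\ldots,x_{n-1}^2])$); placing $x_i$ first would put $x_i^2$ into the Ore set and destroy the containment in $Z_i$.
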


\begin{proof} Consider the variables $\{x_{i,n}\}_{i=1}^n$ defined in
Lemma \ref{xxlem3.3}. By Lemma \ref{xxlem3.3}(5), $\VA[C_{n-1}^{-1}]$
is free over $Z[C_{n-1}^{-1}]$ with basis $\{x_{1,s}^{d_1}\cdots x_{n,s}^{d_n}
\mid d_s=0,1\}$. By Lemma \ref{xxlem3.4}, the discriminant 
$d(\VA[C_{n-1}^{-1}]/Z[C_{n-1}^{-1}])$ is of the form 
$(\prod_{i=1}^{n} (x_i^2))^{2^{n-1}}$ up to a unit in 
$Z[C_{n-1}^{-1}]$. By Lemma \ref{xxlem3.3}(4), we have
$$d(\VA[C_{n-1}^{-1}]/Z[C_{n-1}^{-1}])=(\prod_{i=1}^{n} (x_i^2))^{2^{n-1}}
=(\det M_n)^{2^{n-1}}=(\det M_1)^{2^{n-1}}.$$
By Lemma \ref{xxlem1.3},
$$d(\VA/Z)=_{(Z[C_{n-1}^{-1}])^{\times}} 
d(\VA[C_{n-1}^{-1}]/Z[C_{n-1}^{-1}])=_{(Z[C_{n-1}^{-1}])^{\times}} 
(\det M_1)^{2^{n-1}}.$$
Let $\Phi$ be the element $d(\VA/Z)^{-1}(\det M_1)^{2^{n-1}}$.
Then $\Phi\in (Z[C_{n-1}^{-1}])^{\times}$. This means that both
$\Phi$ and $\Phi^{-1}$ are in $Z[C_{n-1}^{-1}]\subseteq Z_n$.
By symmetry, $\Phi$ is $Z_i$ for all $i$. Thus $\Phi\in \bigcap_{i=1}^n 
Z_i=Q(T)[x_1^2,\cdots,x_n^2]$. Similarly, $\Phi^{-1}$ is in
$Q(T)[x_1^2,\cdots,x_n^2]$. Therefore $\Phi, \Phi^{-1}\in Q(T)$. 

Write $d(\VA/Z)=c \; (\det M_1)^{2^{n-1}}$ where $c=\Phi^{-1}\in Q(T)$. 
It remains to show $c\in Z^{\times}$. Note that
$\VA$ is a filtered algebra such that $\gr \VA\cong 
T_{-1}[x_1,\cdots,x_n]$. By Lemma \ref{xxlem1.5}, 
$$\gr d(\VA/Z)=_{Z^{\times}} d(\gr \VA/\gr Z).$$
The left-hand side of the above is $c\; (\prod_{i=1}(x_i^2))^{2^{n-1}}$
and the right-hand side of the above is 
$(\prod_{i=1}(x_i^2))^{2^{n-1}}$ by Proposition \ref{xxpro1.4}(3)
(assuming $2$ is invertible). Thus $c\in Z^{\times}$ as required.
\end{proof}

Theorem \ref{xxthm0.2} is a special case of Theorem 
\ref{xxthm3.6} by taking $a_{ij}=1$ for all $i<j$.

The algebras $V_n({\mathcal A})$ and $W_n$ are special Clifford algebras.
Now we consider a Clifford algebra in a more general setting. Let $T$ 
be a commutative domain and $V$ be a free $T$-module of rank $n$. Given 
a quadratic form $q:V \longrightarrow T$, we can associate to this data 
the Clifford algebra
\begin{eqnarray*}
C ( V,q ) & = & \frac{T \langle V \rangle}{( x^{2}-q(x)\mid x \in V )}.
\end{eqnarray*}
Note that this $q$ is different from the parameter $q$ in the definition
of the $q$-quantum Weyl algebra $A_q$ and the parameter set ${\bf q}$ in 
the $V_n({\bf q},{\mathcal A})$ and $T_{\bf q}[x_1,\cdots,x_n]$. 
Consider the bilinear form associated to $q$,
\begin{equation}
\label{E3.6.1}\tag{E3.6.1}
b(x,y)=\frac{1}{2}(q(x+y)-q(x)-q(y))
\end{equation}
for all $x,y\in V$. If we choose a 
$T$-basis $x_{1} , \ldots ,x_{n}$ for $V$ and let 
\begin{equation}
\label{E3.6.2}\tag{E3.6.2}
{\mathfrak B} \assign (b_{ij})=(b(x_i,x_j) )_{n \times n} \in T^{n \times n}
\end{equation}
be the symmetric matrix which represents $b$ with respect to this basis, 
then the relations of $C(V,q)$ are
\begin{equation}
\label{E3.6.3}\tag{E3.6.3}
x_{i} x_{j} +x_{j} x_{i} = 2b_{i j}, \quad {\text{for all}}\; i,j.
\end{equation}
Define $\det (q)$ to be $\det ({\mathfrak B})$. 

The following main result is a consequence of Theorem \ref{xxthm3.6}
and Lemma \ref{xxlem1.2}.

\begin{theorem}
\label{xxthm3.7} 
Let $A:=C(V,q)$ be a Clifford algebra over a commutative domain $T$
defined by a quadratic form $q: V\to T$. Pick a $T$-basis of $V$, 
say $\{x_i\}_{i=1}^n$. Then 
\begin{equation}
\label{E3.7.1}\tag{E3.7.1}
d(A/T)=_{T^\times} (\det (x_ix_j+x_jx_i)_{n\times n})^{2^{n-1}}
=_{T^\times} \det (q)^{2^{n-1}}.
\end{equation}
\end{theorem}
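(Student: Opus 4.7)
The plan is to deduce the theorem directly from Theorem \ref{xxthm3.6} by realizing $C(V,q)$ as a quotient of some $\VAM$ and invoking Lemma \ref{xxlem1.2}. To set this up, I would first choose $a_{ij}:=2b_{ij}$ for $1\leq i<j\leq n$ and let ${\mathcal A}'=\{a_{ij}\}$, then consider the algebra $\VAM$ of Section \ref{xxsec3} together with its central subalgebra $Z':=T[x_1^2,\ldots,x_n^2]$. Next I would define a $T$-algebra homomorphism
$$g:\VAM\longrightarrow C(V,q),\qquad x_i\longmapsto x_i.$$
This is well-defined: expanding $(x_i+x_j)^2=q(x_i+x_j)$ in $C(V,q)$ together with \eqref{E3.6.1} yields $x_ix_j+x_jx_i=2b_{ij}=a_{ij}$, which matches the defining relations of $\VAM$.

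Then I would verify the two hypotheses of Lemma \ref{xxlem1.2}. Since $g(x_i^2)=x_i^2=q(x_i)\in T$, condition (i) holds with the target central subalgebra taken to be $T\subseteq C(V,q)$. For condition (ii), note that the ordered-monomial set $\{x_1^{d_1}\cdots x_n^{d_n}\mid d_s\in\{0,1\}\}$ is a $Z'$-basis of $\VAM$ (as used throughout Section \ref{xxsec3}, cf.\ Lemma \ref{xxlem3.3}(5)), and its image in $C(V,q)$ is the standard Clifford $T$-basis of rank $2^n$, provided one knows $C(V,q)$ is $T$-free of this rank. Granting this, Lemma \ref{xxlem1.2} yields
$$g\bigl(d(\VAM/Z')\bigr)=_{T^\times} d(C(V,q)/T).$$

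Finally I would apply Theorem \ref{xxthm3.6}: the left side equals $g((\det M_1)^{2^{n-1}})$, and under $g$ the matrix $M_1=(x_ix_j+x_jx_i)_{n\times n}$ goes to $2{\mathfrak B}$ with $\det(2{\mathfrak B})=2^n\det(q)$. Because $2\in T^\times$ (the standing hypothesis of Section \ref{xxsec3}), the factor $(2^n)^{2^{n-1}}$ is a unit and both asserted equalities in \eqref{E3.7.1} drop out. The only step that is not automatic is the freeness claim needed for condition (ii), namely that $C(V,q)$ is a free $T$-module of rank $2^n$ with the ordered-monomial basis; I would handle this by the usual filtration-and-PBW argument (reducing, as in Lemma \ref{xxlem1.5}, to the associated graded, which is the exterior-type algebra $T_{-1}[x_1,\ldots,x_n]$ modulo $x_i^2-q(x_i)$ at degree zero), or simply cite it as a standard fact about Clifford algebras over commutative rings with $2$ invertible. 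This is the only real technical point; everything else is bookkeeping between the matrices $M_1$, $2{\mathfrak B}$, and $\det(q)$.
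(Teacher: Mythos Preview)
Your proposal is correct and follows essentially the same route as the paper: define the surjection $V_n({\mathcal A})\to C(V,q)$ with $a_{ij}=2b_{ij}$, verify the hypotheses of Lemma~\ref{xxlem1.2} (including the ordered-monomial basis on both sides), apply Theorem~\ref{xxthm3.6}, and absorb the factor of $2^n$ using the standing invertibility of $2$. The paper treats the freeness of $C(V,q)$ over $T$ as ``easy to check'' rather than elaborating on a PBW argument, but otherwise the arguments are identical.
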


\begin{proof} Let $b: V^{\otimes 2}\to T$ be the symmetric bilinear 
form associated to the quadratic form $q$. Let $a_{ij}=2 b(x_i,x_j)$
for all $i<j$ and ${\mathcal A}=\{a_{ij}\}_{1\leq i<j\leq n}$. 
Then there is a canonical algebra surjection
$\pi: V_n({\mathcal A})\to C(V,q)$ sending $x_i\to x_i$ for all 
$i=1,\cdots, n$ and $t\to t$ for all $t\in T$, and the kernel of 
the $\pi$ is the ideal generated by
$\{x_i^2-b_{ii}\}_{i=1}^n$. Clearly, $\pi(T[x_1^2,\cdots,x_n^2])=T$ 
and the matrix $(x_ix_j+x_jx_i)_{n\times n}$ equals $M_1$. 
It is easy to check that 
$\{x_1^{d_1}\cdots x_n^{d_n} \mid d_i=0,1\}$ is a basis of 
$V_n({\mathcal A})$ over $T[x_1^2,\cdots,x_n^2]$ and 
a basis of $C(V,q)$ over $T$.
The first equation of \eqref{E3.7.1} follows from Theorem \ref{xxthm3.6}
and Lemma \ref{xxlem1.2} and the second equation
follows from the fact that $2{\mathfrak B}=(x_ix_j+x_jx_i)_{n\times n}$
and that $2$ is invertible.
\end{proof}

In the rest of this section we briefly discuss ``generic Clifford 
algebras'' which will appear again in Section \ref{xxsec8}. (This generic
Clifford algebra should be called a ``universal Clifford algebra'', but the 
term ``universal Clifford algebra'' has already been used).

Fix an integer $n$. Let $I$ be the set $\{(i,j)\mid 1\leq i\leq
j\leq n\}$ that can be thought as the quotient set $\{(i,j)\mid 
1\leq i,j\leq n\}/((i,j)\sim (j,i))$. Let $w$ denote the integer 
$\frac{1}{2} n(n+1)$. There is a bijection between $I$ and the set 
of first $w$ integers $\{1,2,\cdots, w\}$. Let $T_g$ be the 
commutative domain $k[t_{(i,j)}\mid (i,j)\in I]$
which is isomorphic to $k[t_1,\cdots,t_w]$. Define a $T_g$-algebra
$A_g$ generated by $\{x_1,\cdots,x_n\}$ and subject to the relations
\begin{equation}
\label{E3.7.2}\tag{E3.7.2}
x_i x_j+x_jx_i= 2 t_{(i,j)}, \quad \forall\; 1\leq i\leq j\leq n.
\end{equation}
Let $V_{g}=\bigoplus_{i=1}^n T_g x_i$. Define a bilinear form
$b_g: V_{g}\otimes V_{g}\to T_g$ by $b_g(x_i,x_j)=t_{(i,j)}$ and 
the associated quadratic form by $q_g(x)= b_g(x,x)$ for all 
$x\in V_{g}$. The ``generic Clifford algebra'' $A_g$ is defined 
to be the Clifford algebra associated to $(V_{g},q_g)$. For any 
Clifford algebra $C(V,q)$ over a commutative ring $T$, by 
comparing \eqref{E3.6.3} with \eqref{E3.7.2}, one sees
that there is an algebra map $A_g\to C(V,q)$ sending $x_i\to x_i$ 
and $t_{(i,j)} \to b_{ij}$. Define $\deg x_i=1$ for all $i$ and 
$\deg t_{(i,j)}=2$ for all $(i,j)\in I$. Then $A_g$ is a connected 
graded algebra over $k$.

We also define some factor algebras of $A_g$.
Let $J$ be a subset of $\{(i,j)\mid 1\leq i<j \leq n\}$ and 
$w_J$ denote the integer $w-|J|$. Let $T_{g,J}$ be the 
commutative polynomial ring $k[t_{i,j}\mid (i,j)\in I\setminus J]$,
which is isomorphic to $k[t_1,\cdots,t_{w_J}]$. Define a $T_{g,J}$-algebra
$A_{g,J}$ generated by $\{x_1,\cdots,x_n\}$ and subject to the relations
\begin{equation}
\label{E3.7.3}\tag{E3.7.3}
x_i x_j+x_jx_i= \begin{cases} 2 t_{(i,j)}, & (i,j)\in I\setminus J,\\
0, & (i,j)\in J.\end{cases}
\end{equation}
Let $V_{g,J}=\bigoplus_{i=1}^n T_{g,J} x_i$. Define a bilinear form
$b_{g,J}: V_{g,J}\otimes V_{g,J}\to T_{g,J}$ by $b_{g,J}(x_i,x_j)=
\begin{cases} t_{(i,j)} & (i,j)\in I\setminus J,\\0, & (i,j)\in J.
\end{cases}\;$ 
and the associated quadratic form by $q_{g,J}(x)= b_g(x,x)$ for all 
$x\in V_{g,J}$. Then $A_{g,J}$ is the Clifford algebra associated to 
$(V_{g,J},q_{g,J})$. If $J\subseteq J'\subseteq \{(i,j)\mid 1\leq i<j 
\leq n\}$, there is an algebra map
$A_{g,J}\to A_{g,J'}$ sending $x_i\to x_i$ and $t_{(i,j)}\to 
\begin{cases} t_{(i,j)}, & (i,j)\notin J',\\ 0 & (i,j)\in J'\setminus J.
\end{cases}$. In particular, $A_{g,J}$ is a connected graded
factor ring of $A_g$.

In part (4) of the next lemma, we will use a few undefined concepts 
that are related to the homological properties of an algebra. We refer to 
\cite{Le, LP, RZ} for definitions.

\begin{lemma}
\label{xxlem3.8} Retain the above notation. Assume that $k$ is a field
of characteristic not two. Let $J'$ be subset of $\{(i,j)\mid 1\leq i<j 
\leq n\}$ and $J=J'\setminus \{(i_0,j_0)\}$ for some 
$(i_0,j_0)\in J'$.
\begin{enumerate}
\item[(1)]
The Hilbert series of $A_g$ is
$$H_{A_g}(t)=\frac{(1+t)^n}{(1-t^2)^w}$$
where $w=\frac{1}{2} n(n+1)$.
\item[(2)]
The Hilbert series of $A_{g,J}$ is
$$H_{A_{g,J}}(t)=\frac{(1+t)^n}{(1-t^2)^{w_J}}$$
where $w_J=w-|J|$.
\item[(3)]
$t_{(i_0,j_0)}$ is a central regular element in $A_{g,J'}$, 
and $A_{g,J}=A_{g,J'}/(t_{(i_0,j_0)})$.
\item[(4)]
$A_g$ and $A_{g,J}$ are connected graded Artin-Schelter regular, Auslander regular, 
Cohen-Macaulay noetherian domains.
\end{enumerate}
\end{lemma}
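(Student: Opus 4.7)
The plan is to prove all four parts uniformly by identifying $A_g$ (and each $A_{g,J}$) with an iterated Ore extension of a commutative polynomial ring. Set $T' := k[t_{(i,j)} \mid 1 \le i < j \le n]$ and consider
\[
R := T'[x_1][x_2;\sigma_2,\delta_2]\cdots[x_n;\sigma_n,\delta_n],
\]
where $\sigma_k$ is the identity on $T'$ and sends $x_i \mapsto -x_i$ for $i < k$, while $\delta_k|_{T'} = 0$ and $\delta_k(x_i) = 2t_{(i,k)}$ for $i<k$. A direct computation using the Ore relations verifies that $x_i^2$ is central in $R$ for each $i$. Consequently, the assignment $x_j \mapsto x_j$, $t_{(i,j)} \mapsto t_{(i,j)}$ for $i<j$, and $t_{(i,i)} \mapsto x_i^2$ defines an algebra map $A_g \to R$; conversely the generators of $R$ satisfy in $A_g$ the Ore relations defining $R$, yielding $R \to A_g$. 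These are mutual inverses on generators, so $A_g \cong R$.

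With this identification, parts (1) and (2) follow from the standard PBW Hilbert series of an iterated Ore extension:
\[
H_{A_g}(t) \;=\; H_{T'}(t)\cdot (1-t)^{-n} \;=\; \frac{1}{(1-t^2)^{n(n-1)/2}(1-t)^n} \;=\; \frac{(1+t)^n}{(1-t^2)^{n(n+1)/2}}.
\]
For $A_{g,J}$, the analogous construction uses $T'_J := k[t_{(i,j)} \mid i<j,\ (i,j)\notin J]$ and sets $\delta_k(x_i)=0$ for $(i,k)\in J$; the same computation yields $H_{A_{g,J}}(t) = (1+t)^n/(1-t^2)^{w_J}$.

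For part (3) -- read with its natural correction, that $t_{(i_0,j_0)}$ is central regular in $A_{g,J}$ and $A_{g,J'} = A_{g,J}/(t_{(i_0,j_0)})$, noting $(i_0,j_0)\in J'\setminus J$ -- centrality is immediate from the Ore picture, and regularity holds because $A_{g,J}$ is $T'_J$-free and $t_{(i_0,j_0)}$ is a polynomial variable of $T'_J$. Specializing this variable to zero collapses $T'_J$ to $T'_{J'}$ and forces $\delta_{j_0}(x_{i_0}) = 0$, giving exactly the Ore presentation of $A_{g,J'}$. Part (4) is then an application of the classical preservation theorems for iterated Ore extensions over a commutative polynomial ring (see \cite{Le, LP, RZ}): each Ore step inherits noetherianness, the domain property (since each $\sigma_k$ is an automorphism of a domain), Artin-Schelter regularity, Auslander regularity, and Cohen-Macaulayness.

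The principal obstacle is establishing the identification $A_g \cong R$, and within it the centrality of each $x_i^2$ in $R$. This reduces to a clean calculation: expanding $[x_k, x_i^2]$ via the Ore relations $x_k x_i = -x_i x_k + 2 t_{(i,k)}$ (and its mirror) together with centrality of $t_{(i,k)}$ yields cancellation on the nose. Everything downstream then reduces to routine applications of standard Ore-extension results.
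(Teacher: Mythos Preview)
Your approach is correct and genuinely different from the paper's. You build an iterated Ore extension $R$ over $T'=k[t_{(i,j)}\mid i<j]$ and identify $A_g\cong R$ by sending $t_{(i,i)}\mapsto x_i^2$; the Hilbert series, regularity of $t_{(i_0,j_0)}$, and the homological properties then all follow from standard Ore-extension theory. The paper instead computes the Hilbert series directly from the free $T_g$-basis $\{x_1^{d_1}\cdots x_n^{d_n}\mid d_s=0,1\}$, deduces regularity of $t_{(i_0,j_0)}$ by comparing Hilbert series in parts (1) and (2), and for part (4) works in the opposite direction: it passes to the quotient $A_g/(t_{(i,j)}\mid i<j)\cong k_{-1}[x_1,\dots,x_n]$ and then lifts the AS/Auslander regular, Cohen--Macaulay, domain properties back up through the sequence of central regular elements using \cite{Le,LP}. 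Your route is more uniform and avoids the ``central element lifting'' machinery, at the cost of having to verify that each $(\sigma_k,\delta_k)$ really extends to a skew derivation on $R_{k-1}$ (you should state this check explicitly: $\delta_k$ applied to the relation $x_jx_i+x_ix_j-2t_{(i,j)}$ vanishes). You are also right that part (3) is misstated in the paper---the roles of $J$ and $J'$ are swapped---and your reading is the one actually used in the paper's own proof of part (4). One small caution: the specific preservation statements you need for graded Ore extensions (AS regularity, Auslander regularity, CM) are scattered across the literature rather than all contained in \cite{Le,LP,RZ}; it would strengthen the write-up to pin down precise citations.
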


\begin{proof} (1) Note that $A_{g}$ is a free module over $T_{g}$ 
with basis $\{x_1^{d_1} \cdots x_n^{d_n}\mid d_s=0,1\}$. Recall
$\deg x_i=1$ and $\deg t_{(i,j)}=2$. We have
$$H_{A_{g}}(t)=(1+t)^n H_{T_{g}}(t)=\frac{(1+t)^n}{(1-t^2)^w}.$$

(2) The proof is similar. Use the fact 
$H_{T_{g,J}}(t)=\frac{1}{(1-t^2)^{w_J}}$.

(3) It is clear that $t_{(i_0,j_0)}$ is central in $A_{g,J'}$ 
and $A_{g,J}=A_{g,J'}/(t_{(i_0,j_0)})$. So the ideal
$(t_{(i_0,j_0)})$ is the left ideal $t_{(i_0,j_0)}A_{g,J'}$ 
and the right ideal $A_{g,J'}t_{(i_0,j_0)}$. By parts (1) and (2),
the Hilbert series of $(t_{(i_0,j_0)})$ is $t^2
H_{A_{g,J'}}(t)$. So $t_{(i_0,j_0)}$ is regular. 

(4) We only provide a proof for $A_{g}$. The proof 
for $A_{g,J}$ is similar.

From part (3), $J_M:=\{t_{(i,j)}\mid 1\leq i<j \leq n\}$ 
is a sequence of regular central elements in $A_g$ of positive degree. 
It is easy to see that $A_{g,J_M}(=A_{g}/(J_M))$ is isomorphic 
to the skew polynomial ring $k_{-1}[x_1,\cdots,x_n]$, which is 
an Artin-Schelter regular, Auslander regular, Cohen-Macaulay 
noetherian domain. Applying \cite[Lemma 7.6]{LP} repeatedly, 
$A_{g}$ has finite global dimension. Applying 
\cite[Proposition 3.5, Theorem 5.10]{Le} repeatedly,
$A_g$ is a noetherian Auslander Gorenstein and Cohen-Macaulay
domain. By \cite[Theorem 6.3]{Le}, $A_g$ is Artin-Schelter Gorenstein.
Since $A_g$ has finite global dimension, it is Auslander
regular and Artin-Schelter regular. 
\end{proof}

\begin{remark}
\label{xxrem3.9} Retain the above notation.
\begin{enumerate}
\item[(1)]
Some homological properties of the algebra $A_g$ are given in 
Lemma \ref{xxlem3.8}. It would be interesting to work out 
combinatorial and geometric invariants (and properties) of $A_g$. For 
example, what are the point-module and line-module schemes 
of $A_g$? Definitions of these schemes 
can be found in \cite{VVR, VVRW}.
\item[(2)]
Another way of presenting $A_g$ is the following. Let $S$ be a 
$k$-vector space of dimension $n$. Define $A_g$ to be
$k\langle S\rangle /([x^2,y]=0 \mid \forall \; x,y, \in S)$.
By using this new expression, one can easily see that the group
of graded algebra automorphisms of $A_g$, denoted by $\Aut_{gr}(A_g)$, 
is isomorphic to ${\rm{GL}}_n(k)$. 
\item[(3)]
Suppose $n\geq 2$.
The full automorphism group $\Aut(A_g)$ has not been determined.
It is known that $\Aut(A_g)$ is not affine. For example, if $f(t)$
is a polynomial in $t$, then 
$$x_i\to \begin{cases} x_i & i>1,\\
x_1+f([x_1,x_2]^2) x_2 & i=1, \end{cases}$$
extends to an algebra automorphism of $A_g$.
\item[(4)]
It seems interesting to study ``cubic-algebra''
$k\langle S\rangle /([x^3,y]=0\mid \forall\; x,y\in S)$
and higher-degree analogues.
\item[(5)]
The quotient division ring of $A_g$, denoted by $D_g$, is called the
``generic Clifford division algebra of rank $n$''. 
It would be interesting to study algebraic properties or invariants 
of $D_g$.
\end{enumerate}
\end{remark}

\section{Center of skew polynomial rings}
\label{xxsec4}

To use the discriminant most effectively, one needs to first understand
the center of an algebra. In this section we give a criterion for 
when $T_{\bf q}[x_1,\cdots,x_n]$ is free over its center and when 
the center of $T_{\bf q}[x_1,\cdots,x_n]$ is a polynomial ring. 

Recall that $T$ is a commutative domain and 
${\bf q}: =\{q_{ij}\in T^\times \mid 1\leq i <j\leq n\}$ 
is a set of invertible scalars. Let $P: =T_{\bf q}[x_{1},\ldots ,x_{n}]$ 
be the skew polynomial ring over $T$ subject to the relations 
\eqref{E0.2.1}. We assume that $d_{i j}:=o(q_{i j})<\infty$ and write 

\begin{equation}
\label{E4.0.1}\tag{E4.0.1}
q_{i j} = \exp ( 2 \pi\sqrt{-1}\; k_{i j} /d_{i j} ),
\end{equation}
where $|k_{ij}|<d_{ij}$ and $( k_{ij} ,d_{i j} ) =1$. Note that, by our 
convention, $q_{ij}=q_{ji}^{-1}$ for all $i,j$. Hence, we choose 
$k_{i j} =-k_{j i}$ and $d_{i j} =d_{j i}$. We also adopt the convention 
that if $q_{i j} =1$ then $k_{i j} =0$ and $d_{i j} =1$. In particular, 
$k_{i i} =0$ and $d_{i i} =1$. We can extend $P$ to 
$P[x_1^{-1}, ..., x_n^{-1}]$, with an inverse for 
each $x_i$, with the following expected relations
$$x_i x_i^{-1} = x_i^{-1} x_i = 1, \;
x_j x_i^{-1} = q_{ij}^{-1} x_i^{-1} x_j,\quad
{\text{and}}\quad x_j^{-1} x_i^{-1} = q_{ij} x_i^{-1} x_j^{-1}.$$ 
We need to do some analysis to understand the center of $P$.
Let $\eta_{i}$ denote conjugation by $x_{i}$, sending 
$f\longmapsto x_i^{-1} f x_i$, and let 
$\xi =x_{1}^{s_{1}} \cdots x_{n}^{s_{n}}$. Then
\begin{eqnarray*}
  \eta_{i} ( \xi ) & = & \exp 
( 2 \pi \sqrt{-1}\; \mathbf{e}_{i}^T Y \mathbf{s} ) \; \xi
\end{eqnarray*}
where $Y \in \mathfrak{s o}_{n} ( \mathbb{Q} )$ whose $( i,j )$-th entry is
$k_{ij} /d_{ij}$, $\mathbf{s}$ is the column vector whose $i$-th entry is 
$s_i$ appearing in the powers of $\xi$, and $\mathbf{e}_{i}$ the $i$-th 
standard basis vector in $\mathbb{Q}^{n}$. 

\begin{lemma}
\label{xxlem4.1} Retain the above notation. Then 
$\xi$ is in the center $Z(P)$ of $P$ if and only if $Y \mathbf{s} \in
\mathbb{Z}^{n}$.
\end{lemma}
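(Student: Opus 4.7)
The statement essentially unpacks the displayed formula $\eta_i(\xi) = \exp(2\pi\sqrt{-1}\,\mathbf{e}_i^T Y \mathbf{s})\xi$ sitting immediately before the lemma, so my plan is to first verify that formula, then translate centrality into the displayed scalars being $1$, and finally convert the resulting congruences into the integrality of $Y\mathbf{s}$.

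First I would justify the displayed formula. Since $\eta_i$ is an algebra automorphism, it suffices to compute $\eta_i$ on the generators. From the defining relation $x_j x_i = q_{ij} x_i x_j$ (for $i<j$) one gets $\eta_i(x_j) = x_i^{-1}x_j x_i = q_{ij}\, x_j$, and symmetrically, for $j<i$, $\eta_i(x_j) = q_{ji}^{-1}\,x_j = q_{ij}\,x_j$ by the convention $q_{ji}=q_{ij}^{-1}$; finally $\eta_i(x_i)=x_i$, consistent with $q_{ii}=1$. Hence $\eta_i(\xi) = \prod_{j=1}^n q_{ij}^{s_j}\cdot\xi$. Substituting $q_{ij} = \exp(2\pi\sqrt{-1}\,k_{ij}/d_{ij})$, the scalar becomes $\exp\!\bigl(2\pi\sqrt{-1}\sum_j (k_{ij}/d_{ij})s_j\bigr) = \exp(2\pi\sqrt{-1}\,\mathbf{e}_i^T Y\mathbf{s})$, which is the claimed formula.

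Next, since $P$ is generated as a $T$-algebra by $x_1,\dots,x_n$ and $\xi$ lies over $T$, $\xi\in Z(P)$ if and only if $\xi$ commutes with every $x_i$, which is the same as $\eta_i(\xi)=\xi$ for every $i$. By the formula above, this is equivalent to the root-of-unity scalar $\prod_{j=1}^n q_{ij}^{s_j}\in T^\times$ being equal to $1$ for every $i$. In the complex-exponential notation, $\exp(2\pi\sqrt{-1}\,\mathbf{e}_i^T Y\mathbf{s}) = 1$ exactly when $\mathbf{e}_i^T Y\mathbf{s}\in \mathbb{Z}$; and this holds for all $i$ iff $Y\mathbf{s}\in\mathbb{Z}^n$.

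The only real subtlety is step (1), namely making sure that all the $q$-factors collapse to the single uniform product $\prod_j q_{ij}^{s_j}$ indexed by \emph{all} $j$ (including $j<i$ and $j=i$). The conventions $q_{ji}=q_{ij}^{-1}$ and $q_{ii}=1$ plus the care that $k_{ji}=-k_{ij}$ and $d_{ji}=d_{ij}$ handle this cleanly, matching the fact that the matrix $Y=(k_{ij}/d_{ij})$ is skew-symmetric with zero diagonal. After that the lemma is immediate, and I do not anticipate any serious obstacle.
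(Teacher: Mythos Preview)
Your proof is correct and follows essentially the same approach as the paper: use that $P$ is generated by the $x_i$ so centrality of $\xi$ is equivalent to $\eta_i(\xi)=\xi$ for all $i$, plug in the displayed scalar formula, and translate the resulting condition $\exp(2\pi\sqrt{-1}\,\mathbf{e}_i^T Y\mathbf{s})=1$ into $\mathbf{e}_i^T Y\mathbf{s}\in\mathbb{Z}$ for all $i$. You go a bit further than the paper by explicitly verifying the formula $\eta_i(\xi)=\exp(2\pi\sqrt{-1}\,\mathbf{e}_i^T Y\mathbf{s})\,\xi$ and checking the sign conventions for $j<i$ and $j=i$, which the paper simply states.
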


\begin{proof} Since $P$ is generated by $\{x_i\}$, $\xi\in Z(P)$ if and
only if $\eta_i(\xi)=\xi$ for all $i$, if and only if $\exp 
( 2 \pi \sqrt{-1}\; \mathbf{e}_{i}^T Y \mathbf{s} )=1$, if and only if 
$\mathbf{e}_{i}^T Y \mathbf{s}\in {\mathbb Z}$ for all $i$, and finally, 
if and only if $Y \mathbf{s} \in \mathbb{Z}^{n}$.
\end{proof}

By choosing the standard basis for $\mathbb{Q}^{n}$, we can consider $Y$ 
as a linear transformation $\mathbb{Q}^{n} \longrightarrow \mathbb{Q}^{n}$ 
by sending $\mathbf{s}\longmapsto Y\mathbf{s}$. Here we view 
$\mathbb{Q}^{n}$ as column vectors and $Y$ as a left multiplication. 
We can restrict this map to $\mathbb{Z}^{n} \subset \mathbb{Q}^{n}$ 
(embedded via the standard basis) and compose with the quotient 
$\mathbb{Q}^{n} \longrightarrow \mathbb{Q}^{n} /\mathbb{Z}^{n}$ to obtain 
a $\mathbb{Z}$-module homomorphism $Y' :\mathbb{Z}^{n} \longrightarrow
\mathbb{Q}^{n} /\mathbb{Z}^{n}$. 

\begin{lemma}
\label{xxlem4.2} Retain the above notation.
Then $\xi \in Z ( P )$ if and only if $\mathbf{s} \in \ker ( Y' )$.
\end{lemma}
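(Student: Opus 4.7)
The plan is to observe that this lemma is essentially a bookkeeping reformulation of Lemma \ref{xxlem4.1}, translating the condition ``$Y\mathbf{s} \in \mathbb{Z}^n$'' into the language of the induced map $Y'$.

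First I would invoke Lemma \ref{xxlem4.1} to replace $\xi \in Z(P)$ with the equivalent condition $Y\mathbf{s} \in \mathbb{Z}^n$. Next I would unpack the definition of $Y'$: since $Y'$ is defined as the composition
\[
\mathbb{Z}^n \hookrightarrow \mathbb{Q}^n \xrightarrow{Y} \mathbb{Q}^n \twoheadrightarrow \mathbb{Q}^n/\mathbb{Z}^n,
\]
the element $Y'(\mathbf{s})$ is precisely the class of $Y\mathbf{s}$ modulo $\mathbb{Z}^n$. Therefore $\mathbf{s} \in \ker(Y')$ if and only if $Y\mathbf{s}$ represents the zero class in $\mathbb{Q}^n/\mathbb{Z}^n$, which happens if and only if $Y\mathbf{s} \in \mathbb{Z}^n$. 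Chaining the two equivalences gives the desired biconditional.

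There is no real obstacle here; the content is a two-line definition chase, and the only thing to be careful about is consistency of the domain/codomain conventions (viewing $\mathbf{s}$ as a column vector in $\mathbb{Z}^n$ and $Y$ as left multiplication by a matrix in $\mathfrak{so}_n(\mathbb{Q})$, exactly as set up immediately before the statement). The substantive work was already done in Lemma \ref{xxlem4.1}; this lemma simply repackages the criterion in a form better suited for the module-theoretic analysis of $\ker(Y')$ that will evidently drive the proof of Theorem \ref{xxthm0.3} in the sections to follow.
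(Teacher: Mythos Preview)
Your proposal is correct and matches the paper's own proof essentially verbatim: the paper also invokes Lemma~\ref{xxlem4.1} to get $\xi\in Z(P)\Leftrightarrow Y\mathbf{s}\in\mathbb{Z}^n$, then notes this is equivalent to $Y'(\mathbf{s})=0$ by the definition of $Y'$. There is nothing to add.
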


\begin{proof} By lemma \ref{xxlem4.1}, $\xi \in Z ( P )$ if and only if
$Y\mathbf{s}\in \mathbb{Z}^{n}$, which is equivalent to 
$Y'(\mathbf{s})=0$ by the definition of $Y'$.
\end{proof}

Let $D$ be the matrix $(d_{ij})_{n\times n}$ and let $L_i$ be the 
$\mathrm{lcm}$ of the entries in $i$-th row of $D$, namely,
$L_{i} = \mathrm{lcm} \{d_{i j} \mid j=1, \ldots ,n \}$. 
Since $D$ is a symmetric matrix, $L_i$ is also the $\mathrm{lcm}$ of the 
entries in $i$-th column. Observe that $Z ( P )$ contains the central 
subring $P' \assign k [x_{1}^{L_{1}} , \ldots ,x_{n}^{L_{n}} ]$.
In other words, $\ker ( Y' )$ contains the $\mathbb{Z}$-lattice 
$\Lambda$ spanned by $L_{i} \mathbf{e}_{i}$ for $i=1, \ldots ,n$. 
Therefore $Y'$ factors through 
$$\mathbb{Z}^{n} \longrightarrow M \assign \mathbb{Z}^{n} / \Lambda
=\bigoplus_{i=1}^n {\mathbb Z}/L_i{\mathbb Z}.$$ 
For each $\mathbf{s}\in {\mathbb Z}^n$, the $i$-th entry of $Y'(\mathbf{s})$ 
is $\sum_{j} k_{ij}s_j/d_{ij}\in {\mathbb Q}/{\mathbb Z}$, which is
$L_i$-torsion, or equivalently, in $L_i^{-1}{\mathbb Z}/{\mathbb Z}$. 
Therefore $Y'$ induces a map
$$M\longrightarrow M':= \bigoplus_{i=1}^n L_i^{-1}{\mathbb Z}/{\mathbb Z}.$$
Since $M'$ is naturally isomorphic to $M$, we can define an endomorphism 
$$\overline{Y}: M \longrightarrow M$$ by setting 
$$\overline{Y} \mathbf{s}=(\sum_{j=1}^n L_i (k_{ij}s_j/d_{ij}))_{i=1}^n.$$
In particular,
$\overline{Y} \mathbf{e}_j = \sum_{i=1}^n (k_{ij} L_i / d_{ij}) \mathbf{e}_i$. 
Sometimes we think of $\overline{Y}$ as a matrix
$$\overline{Y}=(k_{ij}L_i/d_{ij})_{n\times n}={\text{diag}} (L_1,\cdots,L_n) Y.$$

The following lemma is a re-interpretation of 
\cite[Lemma 2.3]{CPWZ2}.

\begin{lemma}
\label{xxlem4.3}
Retain the above notation. The following are equivalent.
\begin{enumerate}
\item[(1)]
The center $Z ( P )$ of $P$ is a polynomial ring.
\item[(2)]
$Z(P) = P'$.
\item[(3)]
$\ker (\overline{Y}) =0$. 
\item[(4)]
$\overline{Y}$ is an isomorphism.
\end{enumerate}
\end{lemma}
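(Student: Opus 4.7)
The plan is to close the chain of equivalences as $(4)\Leftrightarrow(3)\Rightarrow(2)\Rightarrow(1)\Rightarrow(3)$, with the first three implications being short and the last one being the main point (essentially a reformulation of \cite[Lemma 2.3]{CPWZ2}).

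For $(3)\Leftrightarrow(4)$, observe that $M=\bigoplus_{i=1}^n \mathbb{Z}/L_i\mathbb{Z}$ is a \emph{finite} abelian group, so any $\mathbb{Z}$-linear endomorphism of $M$ is injective if and only if surjective; equivalently, $\ker\overline{Y}=0$ iff $\overline{Y}$ is an isomorphism. This is the one-line step.

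For $(3)\Rightarrow(2)$, I would combine Lemma \ref{xxlem4.2} with the factorization $Y'\colon \mathbb{Z}^n\to M\xrightarrow{\overline{Y}} M$. Concretely, $Z(P)$ decomposes as the $T$-span of its monomials, so $Z(P)=\bigoplus_{\mathbf{s}\in S}T x^{\mathbf{s}}$ where $S=\{\mathbf{s}\in\mathbb{Z}^n_{\geq 0}\mid \mathbf{s}\in\ker Y'\}$. By the definition of $\overline{Y}$, the image of $\mathbf{s}$ in $M$ lies in $\ker \overline{Y}$, so under hypothesis $(3)$ we get $\mathbf{s}\in\Lambda=\bigoplus L_i\mathbb{Z}$, forcing $x^{\mathbf{s}}\in P'$. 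Thus $Z(P)\subseteq P'$, and combined with the obvious inclusion $P'\subseteq Z(P)$ we conclude $(2)$. The implication $(2)\Rightarrow(1)$ is then immediate since $T[x_1^{L_1},\dots,x_n^{L_n}]$ is visibly a polynomial ring.

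The hard direction is $(1)\Rightarrow(3)$, and I would argue by contrapositive. Suppose $\ker\overline{Y}\neq 0$; choose $0\neq v\in\ker\overline{Y}\subset M$ of order $e>1$, and lift $v$ to $\tilde{\mathbf{s}}\in\mathbb{Z}^n_{\geq 0}$ with $0\leq \tilde s_i<L_i$ and not all zero. Then $x^{\tilde{\mathbf{s}}}$ is a central monomial outside $P'$, and $e\tilde{\mathbf{s}}\in\Lambda$ yields a nontrivial monomial relation
\[
(x^{\tilde{\mathbf{s}}})^e \;=\; \lambda\, x_1^{L_1 m_1}\cdots x_n^{L_n m_n}\qquad(\lambda\in T^\times,\ m_i\in\mathbb{Z}_{\geq 0}),
\]
which forces the affine semigroup $S$ to have strictly more than $n$ indecomposable generators (the $L_i\mathbf{e}_i$ together with each minimal lift of a nonzero element of $\ker\overline{Y}$). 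Since $Z(P)$ is the (possibly twisted) $T$-monoid algebra $T[S]$, the corresponding affine toric scheme $\operatorname{Spec}Z(P)$ fails to be smooth of dimension $n$ over $T$, so $Z(P)$ cannot be isomorphic to $T[y_1,\dots,y_n]$.

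The main obstacle is the last step: one must pass from the purely internal ring-theoretic condition ``$Z(P)$ is a polynomial ring'' to the external lattice-theoretic statement ``$\ker\overline{Y}=0$.'' The cleanest route is probably to invoke \cite[Lemma 2.3]{CPWZ2} directly, which already contains the equivalence $(1)\Leftrightarrow(2)$; then $(2)\Leftrightarrow(3)\Leftrightarrow(4)$ is the easy lattice-theoretic translation carried out above.
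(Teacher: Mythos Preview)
Your proposal is correct once you adopt your own final suggestion: invoke \cite[Lemma 2.3]{CPWZ2} for $(1)\Leftrightarrow(2)$ and then do the lattice-theoretic translation $(2)\Leftrightarrow(3)\Leftrightarrow(4)$. That is exactly what the paper does. The paper's chain is $(1)\Leftrightarrow(2)$ via the cited lemma (if $Z(P)$ is a polynomial ring then it is $T[x_1^{a_1},\dots,x_n^{a_n}]$; since $L_i\mid a_i$ and $P'\subseteq Z(P)$, one gets $a_i=L_i$), then $(3)\Rightarrow(2)$ and $(2)\Rightarrow(3)$ via Lemma~\ref{xxlem4.2} and the factorization $Y'\colon\mathbb{Z}^n\to M\xrightarrow{\overline{Y}}M$, then $(3)\Leftrightarrow(4)$ by finiteness of $M$. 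Your arguments for $(3)\Leftrightarrow(4)$ and $(3)\Rightarrow(2)\Rightarrow(1)$ match the paper essentially verbatim.

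The toric detour you sketch for $(1)\Rightarrow(3)$ is not needed and, as written, does not quite land. Two issues: first, the phrase ``$Z(P)$ is a polynomial ring'' has not yet been pinned down to mean ``over $T$ in exactly $n$ variables,'' and it is precisely \cite[Lemma 2.3]{CPWZ2} that supplies this normalization; without it your dimension count and smoothness claim are floating. Second, the assertion that $\operatorname{Spec} T[S]$ ``fails to be smooth of dimension $n$ over $T$'' when $S$ has more than $n$ indecomposables is geometric intuition valid over a field, but $T$ here is only a commutative domain, so the smoothness argument requires extra care (e.g.\ base change to the fraction field of $T$ and an argument that an abstract $T$-algebra isomorphism $T[S]\cong T[y_1,\dots,y_n]$ forces $S\cong\mathbb{N}^n$). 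Since you already identify the cited lemma as the clean route, drop the toric paragraph and you recover the paper's proof.
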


\begin{proof} $(1) \Leftrightarrow (2)$: One implication is clear. 
For the other implication, we assume that the center $Z ( P )$ is 
a polynomial ring. By \cite[Lemma 2.3]{CPWZ2}, $Z(P)$ is of the
form $T[x_1^{a_i},\cdots,x_n^{a_i}]$. It is easy to check that 
$L_i\mid a_i$ for all $i$. Since $Z(P)\supseteq P'$, $a_i=L_i$ for 
all $i$. The assertion follows.

$(3) \Longrightarrow(2)$:
Let $\xi:=x_1^{s_1} \cdots x_n^{s_n} \in Z(P)$ and let
$\mathbf{s}=(s_i)_{i=1}^n$. By Lemma
\ref{xxlem4.2}, $\mathbf{s}\in \ker (Y')$. Since $\overline{Y}$ 
is induced by $Y'$, $\overline{Y}(\mathbf{s})=0$. By part (3), 
$\mathbf{s}=0$ in $M={\mathbb Z}^n/\Lambda$. So $\mathbf{s}
\in \Lambda$, which is equivalent to $\xi\in P'$. Therefore, 
$Z(P) = P'$ as desired.

$(2) \Longrightarrow(3)$:
Let $\xi:=x_1^{s_1} \cdots x_n^{s_n}\in P$ where 
$\mathbf{s}:=(s_i)_{i=1}^n\in \ker (\overline{Y})$ viewing as
a vector in $M$. By the definition of $M$, we might assume that
each $s_i$ is non-negative and less than $L_i$. Since $\overline{Y}$ is
induced by $Y'$, we have that $\mathbf{s}\in \ker (Y')$. 
By Lemma \ref{xxlem4.2}, $\xi\in Z(P)$. By part (2) and our choice 
of $0\leq s_i<L_i$, $\xi=1$ or $\mathbf{s}=0$ as desired.

$(3) \Leftrightarrow (4)$: This is clear since $M$ is finite.
\end{proof}

The advantage of working with $\overline{Y}$ is that $\ker(\overline{Y})=0$
is equivalent to $\overline{Y}$ being an isomorphism.
Next we need to understand when $\overline{Y}$ is an isomorphism. For the
rest of this section we use $\otimes $ for $\otimes_{\mathbb Z}$ and 
${\mathbb F}_p$ for $\mathbb{Z}/p\mathbb{Z}$.

\begin{lemma}
\label{xxlem4.4}
The morphism $\overline{Y}$ is an isomorphism if and only if $\overline{Y} \otimes
\mathbb{F}_p$ is an isomorphism for all primes $p$. 
\end{lemma}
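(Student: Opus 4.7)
The plan is to exploit the fact that $M = \bigoplus_{i=1}^n \mathbb{Z}/L_i\mathbb{Z}$ is a \emph{finite} abelian group, and that $\overline{Y}$ is an endomorphism of this finite group. First I would record the standard observation: an endomorphism of a finite set is a bijection iff it is surjective, so $\overline{Y}$ is an isomorphism iff it is surjective; similarly, each $\overline{Y}\otimes \mathbb{F}_p$ is an $\mathbb{F}_p$-linear endomorphism of the finite-dimensional $\mathbb{F}_p$-vector space $M\otimes \mathbb{F}_p = M/pM$, so it too is an isomorphism iff it is surjective. Thus the lemma reduces to showing that $\overline{Y}$ is surjective iff $\overline{Y}\otimes \mathbb{F}_p$ is surjective for every prime $p$.

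For the forward direction, if $\overline{Y}$ is surjective then applying the right-exact functor $-\otimes_{\mathbb{Z}}\mathbb{F}_p$ yields surjectivity of $\overline{Y}\otimes \mathbb{F}_p$ immediately. For the converse, set $C := \operatorname{coker}(\overline{Y})$, a finite abelian group. By right exactness,
$$C\otimes_{\mathbb{Z}} \mathbb{F}_p \;\cong\; \operatorname{coker}(\overline{Y}\otimes \mathbb{F}_p) \;=\; 0$$
for every prime $p$, so it suffices to argue that a finite abelian group $C$ with $C/pC=0$ for all primes $p$ must vanish. This is the routine step: decompose $C = \bigoplus_p C_p$ into its $p$-primary components (only finitely many are nonzero), observe that $C/pC = C_p/pC_p$ since $q$ is invertible on $C_q$ for $q\neq p$, and then invoke the structure theorem for finite abelian $p$-groups — any nonzero $C_p\cong \bigoplus_j \mathbb{Z}/p^{e_j}\mathbb{Z}$ with $e_j\geq 1$ has $C_p/pC_p\cong \mathbb{F}_p^r$ with $r\geq 1$. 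Hence each $C_p=0$ and therefore $C=0$, so $\overline{Y}$ is surjective and hence an isomorphism.

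No step here is a serious obstacle — this is essentially a packaging of Nakayama's lemma applied one prime at a time to a finite $\mathbb{Z}$-module. The only thing to be a bit careful about is not confusing $M\otimes \mathbb{F}_p$ (which may be smaller than $M$) with $M$ itself when invoking the ``surjective = isomorphism'' shortcut in each setting; both are legitimate, but for different reasons (finite cardinality on one side, equal finite $\mathbb{F}_p$-dimensions on the other).
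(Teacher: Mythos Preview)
Your proof is correct and follows essentially the same approach as the paper: reduce to surjectivity using finiteness of $M$, then use right-exactness of $-\otimes_{\mathbb Z}\mathbb{F}_p$ to pass between surjectivity of $\overline{Y}$ and surjectivity of each $\overline{Y}\otimes\mathbb{F}_p$. The paper compresses your cokernel argument into the one-line remark that ``surjectivity of a map can be checked on closed fibers,'' which is precisely the Nakayama-type statement you unpack in detail.
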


\begin{proof}
As a $\mathbb{Z}$-module, $M$ is finite, and it suffices to show that 
$\overline{Y}$ is surjective if and only if $\overline{Y} \otimes 
\mathbb{F}_p$ is surjective for each prime $p$. This is clear 
since $- \otimes \mathbb{F}_p$ is right exact, so surjectivity 
of a map can be checked on closed fibers.
\end{proof}

Fix any prime $p$. Let $M_p = M \otimes \mathbb{F}_p$, and 
$\overline{Y}_p = \overline{Y} \otimes \mathbb{F}_p$. For any 
$\mathbf{e}_i$, if $L_i \notin p\mathbb{Z}$, then the image of 
$\mathbf{e}_i$ is zero in $M_p$. We can therefore use 
$\{\mathbf{e}_i | L_i \in p\mathbb{Z}\}$ as a basis of $M_p$.
Consequently, $M_p$ is a vector space over ${\mathbb F}_p$ of dimension 
at most $n$, and we can write $\overline{Y}_p$ as a matrix over ${\mathbb F}_p$. 
Next we will decompose the vector space $M_p$ and the matrix $\overline{Y}_p$.

For each positive integer $m$, let $M_{p,m}$ denote the subspace of $M_p$ 
generated by $\{\mathbf{e}_i | L_i \in p^m \mathbb{Z} - p^{m+1} \mathbb{Z}\}$. 
Let $\overline{Y}_{p,m}$ be the endomorphism 
$$M_{p,m} \longrightarrow M_p \stackrel{\overline{Y}_p}{\longrightarrow} M_p 
\longrightarrow M_{p,m}$$ 
where the first map is the inclusion and the last map is the natural 
projection using the given basis $\{\mathbf{e}_i | L_i \in p\mathbb{Z}\}$.
Then $\overline{Y}_{p,m}$ can be expressed as the submatrix of $\overline{Y}$ taken 
from the row and columns with indices $i$ such that $\mathbf{e}_i \in M_{p,m}$. 
For all but finitely many values of $m$, $M_{p,m} = 0$, and in this case, 
$\overline{Y}_{p,m}$ is a $0 \times 0$ matrix. We adopt the convention that 
the determinant of a $0 \times 0$ matrix is $1$. In general, 
$\det(\overline{Y}_{p,m})$ is in ${\mathbb F}_p$.

\begin{lemma}
\label{xxlem4.5} The following are equivalent.
\begin{enumerate}
\item[(1)]
The map $\overline{Y}_p$ is an isomorphism.
\item[(2)]
For all positive integers $m$, $\overline{Y}_{p,m}$ is an isomorphism.
\item[(3)]
$\det ( \overline{Y}_{p,m } ) \neq 0$ for all positive integers $m$. 
\end{enumerate}
\end{lemma}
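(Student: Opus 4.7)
The plan is to exhibit a block-triangular structure on the matrix $\overline{Y}_p$ whose diagonal blocks are precisely the submatrices $\overline{Y}_{p,m}$. Once this is established, the equivalence of the three conditions follows from the multiplicative formula for the determinant of a triangular block matrix together with the fact that an endomorphism of a finite-dimensional $\mathbb{F}_p$-vector space is an isomorphism if and only if its determinant is nonzero.

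First I would identify the entries of $\overline{Y}_p$. From the formula $\overline{Y} \mathbf{e}_j = \sum_{i=1}^n (k_{ij} L_i / d_{ij}) \mathbf{e}_i$ and the description of $M_p$ as having basis $\{\mathbf{e}_i : p \mid L_i\}$, the $(i,j)$ entry of $\overline{Y}_p$ is the reduction mod $p$ of the integer $k_{ij} L_i / d_{ij}$ (this is an integer since $d_{ij} \mid L_i$). Writing $v_p$ for the $p$-adic valuation, the key claim is: if $v_p(L_i) > v_p(L_j)$, then this entry vanishes. Indeed, since $d_{ij}=d_{ji}$ divides $L_j$ (by the definition of $L_j$ as the $\mathrm{lcm}$ of the $j$-th row of $D$), one has $v_p(d_{ij}) \leq v_p(L_j) < v_p(L_i)$, so $v_p(L_i/d_{ij}) \geq 1$ and the entry is zero mod $p$.

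Consequently, if I order the basis vectors of $M_p$ so that those in $M_{p,1}$ come first, then those in $M_{p,2}$, and so on, the matrix $\overline{Y}_p$ becomes block upper-triangular: every block whose row-block index strictly exceeds its column-block index vanishes. The diagonal blocks are exactly $\overline{Y}_{p,m}$ by definition. Hence
$$\det(\overline{Y}_p) = \prod_{m\geq 1} \det(\overline{Y}_{p,m}),$$
where the product is finite since $\overline{Y}_{p,m}$ is the empty $0\times 0$ matrix for all but finitely many $m$. This immediately gives $(3) \Leftrightarrow \det(\overline{Y}_p)\neq 0$, which is equivalent to (1); and since each $\overline{Y}_{p,m}$ is a square matrix over $\mathbb{F}_p$, block-by-block we get $(2) \Leftrightarrow (3)$. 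I do not foresee a real obstacle here — the only point requiring care is the divisibility $v_p(d_{ij}) \leq \min(v_p(L_i), v_p(L_j))$, and this is immediate from $d_{ij} \mid L_i$ and $d_{ij} \mid L_j$.
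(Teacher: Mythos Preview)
Your proof is correct and takes essentially the same approach as the paper: both arguments rest on the observation that the $(i,j)$ entry of $\overline{Y}_p$ vanishes whenever $v_p(L_i) > v_p(L_j)$, yielding a block-triangular structure with diagonal blocks $\overline{Y}_{p,m}$. The paper phrases this as a filtration of $M_p$ preserved by $\overline{Y}_p$ and passes to subquotients, while you write down the determinant product formula directly; these are equivalent packagings of the same linear-algebra fact.
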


\begin{proof} It is clear that (2) and (3) are equivalent, so we need only
show that (1) and (2) are equivalent. 

Let $m > 0$, and let $i, j$ be such that $L_i \in p^m \mathbb{Z} - p^{m+1} 
\mathbb{Z}$ and $L_j \notin p^m \mathbb{Z}$. Since $L_j = \mathrm{lcm} 
\{d_{kj} \mid k=1,...,n\}$, we have $d_{ij} \notin p^m \mathbb{Z}$, and 
$k_{ij} L_i / d_{ij} \in p \mathbb{Z}$. Therefore, the 
$\mathbf{e}_i$-component of $\overline{Y}_p \mathbf{e}_j$ is zero. We can 
extended this to show that for any $m > m' > 0$, the $M_{p,m'}$-component 
of $\overline{Y}_p (M_{p,m})$ is zero, or equivalently, 
$$\overline{Y}_p (M_{p,m})\subseteq \bigoplus_{n\geq m} M_{p,n}=:N_m.$$ 
This implies that, for any $m > 0$, $\overline{Y}_p$ acts as an endomorphism on 
$N_m$. Since each $M_p$ is finite dimensional, $\overline{Y}_p$ is an isomorphism 
if and only if it acts as an isomorphism on each subquotient 
$N_m / N_{m+1} \cong M_{p,m}$. This action is already given by 
$\overline{Y}_{p,m}$, so the assertion follows.
\end{proof}

Combining all the lemmas in this section we have

\begin{theorem}
\label{xxthm4.6} The center of the skew polynomial ring  
$T_{\bf q}[x_1,\cdots,x_n]$ is a polynomial ring if and only if 
$\det (\overline{Y}_{p,m})\neq 0$ for all primes $p$ and all 
integers $m>0$.
\end{theorem}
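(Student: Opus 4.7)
The plan is to simply chain together Lemmas \ref{xxlem4.3}, \ref{xxlem4.4}, and \ref{xxlem4.5}, each of which does one of the three translations separating the algebraic statement about $Z(P)$ from the explicit determinantal criterion. Since all the conceptual work has been placed in the lemmas, the theorem amounts to bookkeeping.

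First I would apply Lemma \ref{xxlem4.3} to pass from ``$Z(T_{\bf q}[x_1,\dots,x_n])$ is a polynomial ring'' to ``the endomorphism $\overline{Y}\colon M \to M$ is an isomorphism,'' where $M=\bigoplus_{i=1}^n \mathbb{Z}/L_i\mathbb{Z}$. This converts the problem into a question about a single $\mathbb{Z}$-linear map on a finite abelian group. Next I would apply Lemma \ref{xxlem4.4} to reduce the isomorphism test on the finite $\mathbb{Z}$-module $M$ to the prime-by-prime tests that each $\overline{Y}_p := \overline{Y}\otimes \mathbb{F}_p$ is an isomorphism; this uses right exactness of $-\otimes\mathbb{F}_p$ together with the finiteness of $M$. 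Finally, Lemma \ref{xxlem4.5} replaces the condition ``$\overline{Y}_p$ is an isomorphism'' by the block-diagonalized determinantal criterion $\det(\overline{Y}_{p,m})\neq 0$ for every positive integer $m$, using the fact that $\overline{Y}_p$ preserves the filtration $N_m = \bigoplus_{n\geq m} M_{p,m}$ so that its invertibility can be read off from the diagonal blocks.

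Concatenating these three equivalences yields exactly the statement of the theorem. The only mild subtlety worth flagging is the range of the index $m$: for all but finitely many $m$ the index set $I_{p,m}$ is empty and $\overline{Y}_{p,m}$ is the $0\times 0$ matrix, whose determinant is $1$ by convention, so the condition is vacuously satisfied outside a finite set. Thus ``for all $m>0$'' is equivalent to ``for all $m>0$ with $I_{p,m}\neq\emptyset$,'' matching the formulation in Theorem \ref{xxthm0.3}. There is no genuine obstacle in this step; the substantive content has already been established in the preceding lemmas, so the proof reduces to writing out the chain of ``if and only if''s.
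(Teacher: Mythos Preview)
Your proposal is correct and matches the paper's approach exactly: the paper introduces Theorem~\ref{xxthm4.6} with the phrase ``Combining all the lemmas in this section we have'' and gives no further proof, so the intended argument is precisely the chain of equivalences Lemma~\ref{xxlem4.3} $\Rightarrow$ Lemma~\ref{xxlem4.4} $\Rightarrow$ Lemma~\ref{xxlem4.5} that you describe. Your remark about the $0\times 0$ convention is also consistent with the paper's own handling of this point just before Lemma~\ref{xxlem4.5}.
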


Theorem \ref{xxthm4.6} is a slight generalization of Theorem
\ref{xxthm0.3}(a) without the hypothesis that $q_{ij}\neq 1$ for all
$i\neq j$. The definition of the matrices $\overline{Y}_{p,m}$ is not 
straightforward, so we give an example below. Hopefully, this 
example will show that this matrix is not hard to understand.

\begin{example}
\label{xxex4.7} 
We start with the following skew-symmetric
matrix with entries in ${\mathbb Q}$
$$Y := 
\begin{pmatrix}
0 &  4/27 &  2/9 &     0 &          2/3 &           3/5 \\
-4/27 & 0 &    1/3 & 7/9 &           1/3 &           1/5 \\
-2/9 &  -1/3 &  0 &  1/6 &           1/2 &           1/2 \\
0 & -7/9 &  -1/6 &  0 &             2/3 &           0 \\
-2/3 &  -1/3 &  -1/2 &  -2/3 &  0 &             5/8 \\
-3/5 &  -1/5 &  -1/2 &  0 &             -5/8 &  0 \\
\end{pmatrix}.
$$ 
One can easily construct $q_{ij}$ by \eqref{E4.0.1} and the skew 
polynomial ring $T_{\bf q}[x_1,\cdots,x_6]$ by \eqref{E0.2.1}, but 
the point of this example is to work out
the matrices $\overline{Y}_{p,m}$ for all primes $p$ and all $m>0$. 
By considering the denominators of the entries of $Y$, one sees that
$$(L_1, L_2, L_3, L_4, L_5, L_6) = 
(3^3 \cdot 5,\; 3^3 \cdot 5,\; 2 \cdot 3^2, \;2 \cdot 3^2,\; 2^3 \cdot 3, \;
2^3 \cdot 5).$$
This implies that $\overline{Y}_{p,m}$ is a trivial matrix (or a 
$0\times 0$-matrix) except for $p=2,3,5$. Next we consider

$$\overline{Y} = {\text{diag}} (L_1,\cdots,L_6) Y=
\begin{pmatrix}
0 &20 &            30 &            0 &             90 & 81 \\
-20 & 0 &         45 &            105 &           45 &27 \\
-4 &  -6 &     0 &             3 &             9 &9 \\
0 &  -14 &    -3 &            0 &             12 &0 \\
-16 &  -8 & -12 &           -16 &           0 &15 \\
-24 &  -8 & -20 &           0 &             -25 &   0 \\
\end{pmatrix}.
$$ 
Recall that $M_{p,m}$ has a basis $\{\mathbf{e}_i\mid L_i\in p^m{\mathbb Z}-
p^{m+1}{\mathbb Z}\}$ and $\overline{Y}_{p.m}$ is the square submatrix of
$\overline{Y}$ with indices $\{i\mid L_i\in p^m{\mathbb Z}-p^{m+1}{\mathbb Z}\}$
and with entries evaluated in ${\mathbb F}_p$.

For $p=2$, $\overline{Y}_{2,m}$ are the following:
\begin{enumerate}
\item[]
$\overline{Y}_{2,1}$ 
is the principle $(3,4)$-submatrix of $Y$, and 
$$\overline{Y}_{2,1} = 
\begin{pmatrix}
0 &             1\\
1 &             0\\
\end{pmatrix}.
$$
\item[]
$\overline{Y}_{2,3}$ uses indices $5, 6$, and 
$$\overline{Y}_{2,3} = 
\begin{pmatrix}
0 &             1\\
1 &             0\\
\end{pmatrix}.
$$
\item[]
For all $m=2$ or $m>3$, $\overline{Y}_{2,m}$ is trivial. 
\end{enumerate}
Therefore, $\overline{Y}_2$ is an isomorphism by Lemma \ref{xxlem4.5}. 

For $p=3$, $\overline{Y}_{3,m}$ are the following:
\begin{enumerate}
\item[]
$\overline{Y}_{3,1}$ uses only index $5$, and is the $1 \times 1$ zero matrix. 
\item[]
$\overline{Y}_{3,2}$ uses indices $3, 4$, and is the $2 \times 2$ zero matrix.
\item[]
$\overline{Y}_{3,3}$ uses indices $1, 2$, and 
$$\overline{Y}_{3,3} = 
\begin{pmatrix}
0 &     1\\
-1 &    0\\
\end{pmatrix}.
$$
\item[]
For all $m>3$, $\overline{Y}_{3,m}$ is trivial.
\end{enumerate}
Since $\det(\overline{Y}_{3,1}) = \det(\overline{Y}_{3,2}) = 0$, $\overline{Y}_3$ 
is not an isomorphism by Lemma \ref{xxlem4.5}. Consequently, the center of 
$T_{\bf q}[x_1,\cdots,x_6]$ is not a polynomial ring by Theorem \ref{xxthm4.6}.

For $p=5$, $\overline{Y}_{5,m}$ are the following:

\begin{enumerate}
\item[]
$\overline{Y}_{5,1}$ uses indices $1, 2, 6$, and 
$$\overline{Y}_{5,1} = 
\begin{pmatrix}
0 &     0 &     1\\
0 &     0 &     2\\
-1 &    -2 &    0\\
\end{pmatrix}.
$$
\item[]
For all $m>1$, $\overline{Y}_{5,m}$ is trivial.
\end{enumerate}
It is easy to check that $\det (\overline{Y}_{5,1})=0$. Therefore $\overline{Y}_{5}$
is not an isomorphism. 

For $p>5$, $\overline{Y}_{p,m}$ is trivial for all $m>0$.
\end{example}

\section{Low dimensional cases}
\label{xxsec5}

We start with some easy consequences of Theorem \ref{xxthm4.6} and then 
discuss the case when $n$ is 3 or 4.

\begin{corollary}
\label{xxcor5.1}
Suppose there are a prime $p$ and an $m>0$ such that $M_{p,m}$ is 
odd dimensional. Then $\overline{Y}_p$ is not an isomorphism. As a consequence, 
the center of $T_{\bf q}[x_1,\cdots,x_n]$ is not a polynomial ring.
\end{corollary}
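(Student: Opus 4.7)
The plan is to show $\det(\overline{Y}_{p,m}) = 0$ under the hypothesis; then Lemma \ref{xxlem4.5} gives that $\overline{Y}_p$ is not an isomorphism, Lemma \ref{xxlem4.4} gives that $\overline{Y}$ is not an isomorphism, and Lemma \ref{xxlem4.3} delivers the statement about the center.

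The main step will be to factor $\overline{Y}_{p,m}$ as $D S$ over $\mathbb{F}_p$, with $D$ diagonal and invertible and $S$ alternating. For each $i \in I_{p,m}$, write $L_i = p^m u_i$ with $\gcd(u_i,p)=1$, and set $D = \mathrm{diag}(u_i)_{i \in I_{p,m}}$, which is invertible modulo $p$. Looking at an entry $(\overline{Y}_{p,m})_{ij} = k_{ij} L_i/d_{ij} \pmod p$ for $i,j \in I_{p,m}$: since $d_{ij} \mid L_i$ one has $v_p(d_{ij}) \leq m$, and when this inequality is strict the entry vanishes modulo $p$. When $v_p(d_{ij}) = m$, write $d_{ij} = p^m e_{ij}$ with $\gcd(e_{ij}, p) = 1$, so that $(\overline{Y}_{p,m})_{ij} \equiv u_i \cdot k_{ij} e_{ij}^{-1} \pmod p$. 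Setting $S_{ij} := k_{ij} e_{ij}^{-1} \bmod p$ in the latter case and $S_{ij} := 0$ otherwise yields $\overline{Y}_{p,m} = D S$.

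Next I will verify that $S$ is alternating. Using $k_{ji} = -k_{ij}$ together with $d_{ji} = d_{ij}$ (hence $e_{ji} = e_{ij}$) gives $S_{ji} = -S_{ij}$; the diagonal vanishes since $k_{ii} = 0$ (equivalently, $d_{ii} = 1$ forces $v_p(d_{ii}) = 0 < m$, putting us in the zero case).

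The conclusion will then invoke the classical fact that an alternating matrix of odd size over any field has determinant zero, via the Pfaffian identity $\det S = \mathrm{Pf}(S)^2$ with $\mathrm{Pf}(S) = 0$ in odd size. Since $\dim_{\mathbb{F}_p} M_{p,m} = |I_{p,m}|$ is odd by hypothesis, we obtain $\det S = 0$, and hence $\det \overline{Y}_{p,m} = \det(D)\det(S) = 0$. The delicate point I anticipate is the characteristic-$2$ case: a merely skew-symmetric matrix in that characteristic need not be alternating and can have nonzero determinant, so the careful bookkeeping that $S$ has zero diagonal (forced by $k_{ii} = 0$) is essential rather than cosmetic.
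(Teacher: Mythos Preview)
Your proof is correct and follows the same strategy as the paper: show $\det(\overline{Y}_{p,m})=0$ via the vanishing of the determinant of an odd-size alternating matrix, then invoke Lemma~\ref{xxlem4.5} and Theorem~\ref{xxthm4.6}. The paper's proof is a one-liner asserting that $\overline{Y}_{p,m}$ is itself skew-symmetric; strictly speaking this is not quite accurate, since the $(i,j)$ and $(j,i)$ entries of $\overline{Y}$ carry factors $L_i$ and $L_j$ respectively, and for $i,j\in I_{p,m}$ one only knows $v_p(L_i)=v_p(L_j)=m$, not $L_i\equiv L_j\pmod p$. Your factorization $\overline{Y}_{p,m}=DS$ with $D=\mathrm{diag}(u_i)$ invertible and $S$ alternating is exactly what makes the paper's claim precise, and your care about the diagonal (using $k_{ii}=0$) correctly handles the $p=2$ case that the paper flags parenthetically. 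So your argument is essentially the paper's, but with the skew-symmetry step filled in properly.
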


\begin{proof}
If $\overline{Y}_{p,m}$ is a skew-symmetric matrix of odd size, its
determinant is zero (this is true even when $p=2$). The rest follows 
from Lemma \ref{xxlem4.5}
and Theorem \ref{xxthm4.6}.
\end{proof}

\begin{corollary}
\label{xxcor5.2}
Suppose there is a prime $p$ such that $M_p$ is odd dimensional.
Then $\overline{Y}_p$ is not an isomorphism. As a consequence, 
the center of $T_{\bf q}[x_1,\cdots,x_n]$ is not a polynomial ring.
\end{corollary}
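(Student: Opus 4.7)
The plan is to deduce Corollary \ref{xxcor5.2} directly from Corollary \ref{xxcor5.1} together with the decomposition of $M_p$ into the pieces $M_{p,m}$. The key observation is that the basis $\{\mathbf{e}_i \mid L_i \in p\mathbb{Z}\}$ of $M_p$ partitions according to the exact power of $p$ dividing $L_i$: each $i$ with $L_i \in p\mathbb{Z}$ lies in a unique stratum $\{i \mid L_i \in p^m\mathbb{Z} - p^{m+1}\mathbb{Z}\}$ for some $m \geq 1$, and this stratum is precisely the indexing set for the basis of $M_{p,m}$ used to define $\overline{Y}_{p,m}$.

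First I would record the direct sum decomposition
\[
M_p \;=\; \bigoplus_{m \geq 1} M_{p,m},
\]
which is immediate from the partition of basis vectors described above (and holds with only finitely many nonzero summands since each $L_i$ is a finite positive integer). Taking $\mathbb{F}_p$-dimensions gives $\dim_{\mathbb{F}_p} M_p = \sum_{m \geq 1} \dim_{\mathbb{F}_p} M_{p,m}$.

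Next, under the hypothesis that $\dim_{\mathbb{F}_p} M_p$ is odd, the sum on the right cannot consist entirely of even integers, so there must exist at least one $m \geq 1$ with $\dim_{\mathbb{F}_p} M_{p,m}$ odd. Corollary \ref{xxcor5.1} then yields that $\overline{Y}_p$ is not an isomorphism, and Lemma \ref{xxlem4.4} together with Theorem \ref{xxthm4.6} give the stated consequence that the center of $T_{\bf q}[x_1,\ldots,x_n]$ is not a polynomial ring.

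There is no real obstacle here; the statement is essentially a pigeonhole repackaging of the previous corollary. The only point worth double-checking is that $M_{p,m}$ is indeed a well-defined direct summand of $M_p$ rather than merely a subspace — but this follows at once from the fact that the defining conditions on $L_i$ (namely $L_i \in p^m\mathbb{Z} - p^{m+1}\mathbb{Z}$ for varying $m \geq 1$) form a disjoint cover of $\{i \mid L_i \in p\mathbb{Z}\}$, so no further bookkeeping is needed.
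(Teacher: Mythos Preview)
Your proposal is correct and follows exactly the same approach as the paper: the paper's proof simply observes that $M_p = \bigoplus_{m=1}^\infty M_{p,m}$, so odd-dimensionality of $M_p$ forces some $M_{p,m}$ to be odd-dimensional, and then invokes Corollary \ref{xxcor5.1}. Your write-up spells out the partition of basis vectors a bit more explicitly, but the argument is identical.
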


\begin{proof}
Since $M_p = \bigoplus_{m=1}^\infty M_{p,m}$, if it is odd dimensional, at 
least one $M_{p,m}$ must be odd dimensional. The assertion follows
from Corollary \ref{xxcor5.1}.
\end{proof}

\begin{corollary}
\label{xxcor5.3}
Suppose, for each prime $p$, $p\mid  d_{ij}$ for at most one pair
$(i,j)$, $1\leq i<j\leq n$. Then $\overline{Y}_p$ is an isomorphism for 
each $p$. As a 
consequence, the center of $T_{\bf q}[x_1,\cdots,x_n]$ is a polynomial ring.
\end{corollary}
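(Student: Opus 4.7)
The plan is to apply Theorem \ref{xxthm4.6} via Lemma \ref{xxlem4.5}: for each prime $p$, I need to verify that $\det(\overline{Y}_{p,m}) \neq 0$ in $\mathbb{F}_p$ for every $m > 0$ with $M_{p,m} \neq 0$. The hypothesis limits each prime to at most one ``offending'' pair $(i,j)$, which should reduce the problem to a trivial $2 \times 2$ check.

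Fix a prime $p$. If $p \nmid d_{ij}$ for every pair, then $p \nmid L_i$ for any $i$ (since $L_i = \mathrm{lcm}\{d_{ij}\}$), so $M_p = 0$ and $\overline{Y}_p$ is vacuously an isomorphism. Otherwise, by hypothesis there is exactly one pair $i_0 < j_0$ with $p \mid d_{i_0 j_0}$. I would first observe that, for any $i \notin \{i_0, j_0\}$, none of the entries $d_{ij}$ is divisible by $p$ (by the uniqueness assumption), so $p \nmid L_i$; thus only $\mathbf{e}_{i_0}$ and $\mathbf{e}_{j_0}$ survive in $M_p$. Letting $p^{m_0}$ denote the exact power of $p$ dividing $d_{i_0 j_0}$, the same uniqueness shows that $p^{m_0}$ is also the exact $p$-part of both $L_{i_0}$ and $L_{j_0}$. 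Hence $M_{p,m} = 0$ for $m \neq m_0$, and $M_{p,m_0}$ has basis $\{\mathbf{e}_{i_0}, \mathbf{e}_{j_0}\}$.

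Next I would compute $\overline{Y}_{p,m_0}$, which is the $2 \times 2$ submatrix of $\overline{Y}$ indexed by $\{i_0, j_0\}$:
\[
\overline{Y}_{p,m_0} \;=\; \begin{pmatrix} 0 & k_{i_0 j_0} L_{i_0}/d_{i_0 j_0} \\ k_{j_0 i_0} L_{j_0}/d_{j_0 i_0} & 0 \end{pmatrix} \pmod{p}.
\]
Since $L_{i_0}/d_{i_0 j_0}$ is an integer whose $p$-part is $p^{m_0}/p^{m_0}=1$, it is coprime to $p$; moreover $(k_{i_0 j_0}, d_{i_0 j_0}) = 1$ forces $p \nmid k_{i_0 j_0}$. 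Therefore the off-diagonal entry $k_{i_0 j_0} L_{i_0}/d_{i_0 j_0}$ is nonzero mod $p$, and likewise for the other entry. This gives $\det(\overline{Y}_{p,m_0}) \neq 0$ in $\mathbb{F}_p$, and all other $\overline{Y}_{p,m}$ are $0 \times 0$ with determinant $1$.

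By Lemma \ref{xxlem4.5}, $\overline{Y}_p$ is an isomorphism, and since this holds for all $p$, the consequence on $Z(T_{\mathbf{q}}[x_1,\ldots,x_n])$ follows from Lemma \ref{xxlem4.4} together with Theorem \ref{xxthm4.6}. No step is especially hard; the only thing to be careful about is the bookkeeping of $p$-parts of $L_i$ and $d_{ij}$ to ensure the surviving $2\times 2$ block is nonsingular mod $p$.
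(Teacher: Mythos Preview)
Your proof is correct and follows essentially the same approach as the paper: split into the case where no $d_{ij}$ is divisible by $p$ (so $M_p=0$) and the case where exactly one pair $(i_0,j_0)$ has $p\mid d_{i_0j_0}$, in which case $M_{p,m}$ is nontrivial only for the single value $m=m_0$ and $\overline{Y}_{p,m_0}$ is a $2\times 2$ matrix with zero diagonal and nonzero off-diagonal entries, then conclude via Lemma~\ref{xxlem4.5} and Theorem~\ref{xxthm4.6}. Your version is slightly more detailed than the paper's in explicitly verifying that the off-diagonal entry $k_{i_0j_0}L_{i_0}/d_{i_0j_0}$ is a unit mod $p$, whereas the paper simply asserts the $2\times 2$ block is nonzero; otherwise the arguments coincide.
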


\begin{proof}
If each $d_{ij} \notin p \mathbb{Z}$, then each $L_i \notin p \mathbb{Z}$, 
$M_p = 0$ and $\overline{Y}_p$ is trivially an isomorphism.

If $d_{ij} \in p^m \mathbb{Z} - p^{m+1} \mathbb{Z}$ for some $i, j$ and some 
positive integer $m$, and each of every other term $d_{k \ell} \notin p 
\mathbb{Z}$, then $L_i, L_j \in p^m \mathbb{Z} - p^{m+1} \mathbb{Z}$, and 
each of every other $L_k \notin p \mathbb{Z}$. This shows that 
$\overline{Y}_{p,m}$ is a nonzero $2 \times 2$ skew-symmetric matrix 
(i.e. $\det(\overline{Y}_{p,m}) \neq 0$) and $M_{p,m'}=0$ for each $m' \neq m$. 
The rest follows from Lemma \ref{xxlem4.5}
and Theorem \ref{xxthm4.6}.
\end{proof}

Next we give simple criteria for $\overline{Y}$ to be an isomorphism in the 
cases where $n=3,4$. 

\begin{corollary}
\label{xxcor5.4}
The center of $T_{\bf q}[x_1,x_2,x_3]$ is a polynomial ring 
if and only if $(d_{i j},d_{i k}) =1$ for all different $i,j,k$. 
\end{corollary}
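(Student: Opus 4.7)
The plan is to observe that for $n=3$ the hypothesis ``$(d_{ij},d_{ik})=1$ for all distinct $i,j,k$'' is simply the statement that the three integers $d_{12},d_{13},d_{23}$ are pairwise coprime, since any two of them share a common subscript. Once rephrased this way, both directions become short applications of the structure theorems of Section \ref{xxsec4}.

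For the sufficiency ($\Leftarrow$), I would note that if $d_{12},d_{13},d_{23}$ are pairwise coprime, then for every prime $p$, $p$ divides at most one of the $d_{ij}$. This is precisely the hypothesis of Corollary \ref{xxcor5.3}, so the center of $T_{\bf q}[x_1,x_2,x_3]$ is a polynomial ring. This direction requires no additional work beyond invoking the corollary.

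For the necessity ($\Rightarrow$), I would argue by contrapositive. Suppose the $d_{ij}$ are not pairwise coprime, so there exist a prime $p$ and two distinct pairs among $\{(1,2),(1,3),(2,3)\}$, say $(i,j)$ and $(i,k)$, with $p\mid d_{ij}$ and $p\mid d_{ik}$. Since $L_s=\mathrm{lcm}\{d_{sl}\mid l\}$, the prime $p$ then divides $L_i$, $L_j$ and $L_k$, i.e.\ all three $L_1,L_2,L_3$. Hence $M_p$ has $\mathbb{F}_p$-basis $\mathbf{e}_1,\mathbf{e}_2,\mathbf{e}_3$ and is therefore $3$-dimensional. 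Being of odd dimension, Corollary \ref{xxcor5.2} forces $\overline{Y}_p$ to fail to be an isomorphism, and Theorem \ref{xxthm4.6} (via Lemmas \ref{xxlem4.3}--\ref{xxlem4.4}) then implies that $Z(T_{\bf q}[x_1,x_2,x_3])$ is not a polynomial ring. The only subtlety, and thus the only potential obstacle, is making sure one correctly reads off which $L_s$ are divisible by $p$ from the hypothesis $p\mid d_{ij}, p\mid d_{ik}$; once this is verified, the argument is immediate. No heavy computation is required since the dimension parity alone kills $\det \overline{Y}_{p,m}$ at some level $m$.
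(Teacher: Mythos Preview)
Your proposal is correct and follows essentially the same argument as the paper's own proof: both directions invoke Corollary~\ref{xxcor5.3} for sufficiency, and for necessity both observe that a common prime factor of two of the $d_{ij}$ forces all three $L_s$ into $p\mathbb{Z}$, making $M_p$ three-dimensional and hence (via Corollary~\ref{xxcor5.2}) $\overline{Y}_p$ non-invertible. Your identification of the hypothesis with pairwise coprimality of $d_{12},d_{13},d_{23}$ is exactly how the paper reads the condition as well.
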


\begin{proof}
There are only three $d$ terms -- $d_{12}$, $d_{13}$, and $d_{23}$. If 
each $(d_{ij}, d_{ik}) = 1$, then no prime is a factor of more than one 
term in $\{d_{ij}\}$. By Corollary \ref{xxcor5.3}, the center of 
$T_{\bf q}[x_1,x_2,x_3]$ is a polynomial ring.

Conversely, suppose that $p$ is a prime such that $d_{ij}, d_{ik} \in 
p \mathbb{Z}$ for some $i, j, k$. Then $L_1, L_2, L_3 \in p \mathbb{Z}$.
This implies that $M_p$ has dimension 3. Hence, by Corollary \ref{xxcor5.2},
$\overline{Y}_p$ is not an isomorphism.
So $\overline{Y}$ is not an isomorphism. Therefore 
the center of $T_{\bf q}[x_1,x_2,x_3]$ is not a polynomial ring by 
Lemma \ref{xxlem4.3}.
\end{proof}

\begin{corollary}
\label{xxcor5.5}
The center of $T_{\bf q}[x_1,x_2,x_3,x_4]$ 
is a polynomial ring if and only if, for each prime $p$, one 
of the following holds:
\begin{enumerate}
\item 
Each $L_i \notin p \mathbb{Z}$.
\item 
For some positive integer $m$, $\overline{Y}_{p,m}$ is $4 \times 4$ with 
nonzero determinant.
\item 
There are distinct indices $i, j, k, \ell \in \{1, 2, 3, 4\}$ and a 
nonnegative integer $m$ such that $d_{ij} \in p^{m+1} \mathbb{Z}$, 
$d_{k\ell} \in p^m \mathbb{Z} - p^{m+1} \mathbb{Z}$, and every other 
$d$ term is not in $p^{m+1} \mathbb{Z}$. 
\end{enumerate}
\end{corollary}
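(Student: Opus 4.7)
The plan is to apply Theorem \ref{xxthm4.6} together with Lemma \ref{xxlem4.5} prime by prime: the center of $T_{\bf q}[x_1,x_2,x_3,x_4]$ is a polynomial ring if and only if, for every prime $p$ and every positive integer $m$, $\det(\overline{Y}_{p,m})\neq 0$. Fix a prime $p$ and set $v_i:=v_p(L_i)$; then $\dim M_{p,m}=|\{i:v_i=m\}|$, and since each $\overline{Y}_{p,m}$ is skew-symmetric, Corollary \ref{xxcor5.1} forces every $\dim M_{p,m}$ to be even. With $n=4$ this restricts the multiset $\{v_1,v_2,v_3,v_4\}$ to only four admissible patterns.

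I would then enumerate these patterns: (a) all four $v_i=0$; (b) two of the $v_i$'s are equal and positive while the other two vanish; (c) all four $v_i$'s are equal to a single positive integer $m_0$; (d) the $v_i$'s split into two pairs at distinct positive levels $s>m\geq 1$. Pattern (a) is exactly condition (1) and makes $\overline{Y}_p$ vacuously an isomorphism. Pattern (c) is condition (2), since $\overline{Y}_p=\overline{Y}_{p,m_0}$ is a single nontrivial $4\times 4$ block whose nonvanishing must be imposed explicitly.

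The key step is matching patterns (b) and (d) with condition (3). Here I would exploit the elementary bound $v_p(d_{rs})\leq\min(v_p(L_r),v_p(L_s))$ together with the definition $L_r=\mathrm{lcm}(d_{r1},\ldots,d_{r4})$. In pattern (b), with $v_i=v_j=s>0$ and $v_k=v_\ell=0$, the only $d$ that can lie in $p\mathbb{Z}$ is $d_{ij}$, and necessarily $v_p(d_{ij})=s$; this is condition (3) with $m=0$. In pattern (d), with $v_i=v_j=s$ and $v_k=v_\ell=m$, the bound pins $v_p(d_{ij})=s\geq m+1$ and $v_p(d_{ik}),v_p(d_{i\ell}),v_p(d_{jk}),v_p(d_{j\ell})\leq m$; moreover, requiring the $(k,\ell)$-entry $k_{k\ell}L_k/d_{k\ell}$ (mod $p$) of $\overline{Y}_{p,m}$ to be nonzero forces $v_p(d_{k\ell})=m$ exactly. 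These facts together give condition (3) with the appropriate $m$.

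The closing verification is that in patterns (b) and (d) the relevant $2\times 2$ blocks automatically have nonzero determinant once condition (3) holds: the off-diagonal entry $k_{rs}L_r/d_{rs}$ (mod $p$) is nonzero because $v_p(L_r)=v_p(d_{rs})$ gives $v_p(L_r/d_{rs})=0$, and coprimality $(k_{rs},d_{rs})=1$ with $p\mid d_{rs}$ yields $(k_{rs},p)=1$. The converse direction (starting from (1), (2), or (3) and deducing that $\overline{Y}_p$ is an isomorphism) reuses exactly the same valuation computation. The main obstacle I expect is the bookkeeping in pattern (d): one must carefully use the $\mathrm{lcm}$ definition of $L_r$ and the symmetry $d_{rs}=d_{sr}$ to trace how the six $p$-adic valuations of the $d$'s line up with the numeric hypotheses in condition (3), and in particular to exclude any spurious mixed configuration that might escape the three listed cases.
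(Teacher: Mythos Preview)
Your proposal is correct and follows essentially the same approach as the paper: reduce via Theorem~\ref{xxthm4.6} and Lemma~\ref{xxlem4.5} to the condition that each $\overline{Y}_p$ is an isomorphism, use the parity constraint from Corollary~\ref{xxcor5.1} to restrict the possible block sizes, and then carry out the same case analysis on $\dim M_p\in\{0,2,4\}$ (your patterns (a)--(d)). Your systematic use of $p$-adic valuations $v_i=v_p(L_i)$ and the bound $v_p(d_{rs})\le\min(v_r,v_s)$ makes the bookkeeping a bit cleaner than in the paper, but the logical structure and the key observations---in particular that $v_p(d_{ij})$ is pinned to the top level by the $\mathrm{lcm}$ definition of $L_i$, and that nonvanishing of the off-diagonal entry of a $2\times 2$ block $\overline{Y}_{p,m}$ is equivalent to $v_p(d_{k\ell})=m$---are identical to those in the paper's proof.
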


\begin{proof} Let $P=T_{\bf q}[x_1,x_2,x_3,x_4]$.
By Lemmas \ref{xxlem4.3} and \ref{xxlem4.4}, $Z(P)$
is a polynomial ring if and only if $\overline{Y}_p$ is an isomorphism
for all $p$. It remains to show that, for each $p$, $\overline{Y}_p$
is an isomorphism if and only if one of (a), (b), or (c) holds. 
Now we fix $p$, and prove the assertion in three cases 
according to the shape of $M_p$. 

First we prove the ``if'' part. 

(a) If each $L_i \notin p \mathbb{Z}$, then $M_p = 0$ and $\overline{Y}_p$ is 
trivially an isomorphism. This handles the case when $M_p=0$.

(b) If for some $m > 0$, $\overline{Y}_{p,m}$ is $4 \times 4$ with nonzero 
determinant, then every other $\overline{Y}_{p,r}$ (for all $r\neq m$) is 
a $0 \times 0$ matrix, and consequently, $\overline{Y}_p$ an isomorphism. 
This is the case when $M_p=M_{p,m}$ is 4-dimensional for one $m$. 

(c) Assume the hypotheses in part (c). Let $m' > m$ be the integer such 
that $d_{ij} \in p^{m'} \mathbb{Z} - p^{m'+1} \mathbb{Z}$. If $m = 0$, 
then $d_{ij}$ is the only $d$ term divisible by $p$. Hence 
$\overline{Y}_{p,m'}$ is a skew-symmetric 
$2\times 2$ nonzero matrix and $\overline{Y}_{p,r}$ is trivial for all 
$r\neq m'$. Therefore  $\overline{Y}_p$ is an isomorphism. If $m > 0$, then 
$\overline{Y}_{p,m}$ and $\overline{Y}_{p,m'}$ are both skew-symmetric and 
$2 \times 2$, and (because $k_{k \ell} L_k / d_{k \ell} \notin p\mathbb{Z}$), 
nonzero. Furthermore, every other $\overline{Y}_{p,r}$ is $0 \times 0$ for all 
$r\neq m,m'$. Therefore $\overline{Y}_p$ is an isomorphism.

For the rest we prove the ``only if'' part.

Suppose that  $\overline{Y}_p$ is an isomorphism. By 
Corollary \ref{xxcor5.2}, $M_p$ is even dimensional, that is,
$\dim M_p = 0, 2$ or $4$.

The $\dim M_p = 0$ case coincides with the case when each 
$L_i \notin p \mathbb{Z}$, so we obtain case (a).

For the $\dim M_p = 2$ case, at least one $d_{ij} \in 
p \mathbb{Z}$, $L_i, L_j \in p \mathbb{Z}$, and no other $d$ term 
is a multiple of $p$, so $\overline{Y}_p$ is necessarily an isomorphism. 
We can set $m = 0$, so that $d_{ij} \in p^{m+1} \mathbb{Z}$, and all 
other $d_{ab} \notin p^{m+1} \mathbb{Z}$. So we obtain (c). 

All that remains is the $\dim M_p = 4$ case. We have that each $M_{p,m}$ 
is even dimensional by Corollary \ref{xxcor5.1}. If $\dim M_{p,m} = 4$ 
for some $m$, then $\overline{Y}_{p,m}$ is $4 \times 4$ and $\overline{Y}_p$ is an 
isomorphism if and only if $\det (\overline{Y}_{p,m}) \ne 0$. So we obtain case
(b).

Finally, suppose there exist $m' > m > 0$ such that $\dim M_{p,m} = 
\dim M_{p,m'} = 2$. Let $i, j, k, \ell$ be distinct such that 
$L_i, L_j \in p^{m'} \mathbb{Z} - p^{m'+1} \mathbb{Z}$ and 
$L_k, L_\ell \in p^m \mathbb{Z} - p^{m+1} \mathbb{Z}$. We must have that 
$d_{ij} \in p^{m'} \mathbb{Z} \subseteq p^{m+1} \mathbb{Z}$ and every 
other $d$ term is not in $p^{m+1} \mathbb{Z}$. If 
$d_{k\ell} \notin p^m \mathbb{Z}$, then 
$k_{k\ell} L_k/d_{k\ell}, k_{\ell k} L_\ell / d_{\ell k} \in p \mathbb{Z}$ 
and $\overline{Y}_{p,m}$ is the $2 \times 2$ zero matrix, yielding a 
contradiction. Therefore, $d_{k\ell}$ must be in $p^m \mathbb{Z}$. So 
we obtain case (c) again.
\end{proof}

\section{Center of Generalized Weyl algebras}
\label{xxsec6}

Let $T$ be a commutative $k$-domain. In this section we assume that
${\bf q}:=\{q_{ij}\}$ is a set of roots of unity in $T$ and 
${\mathcal A}:=\{a_{ij}\mid 1\leq i<j\leq j\}$ be a subset of $T$. Define 
the generalized Weyl algebra associated to $({\bf q},{\mathcal A})$ to be the
central $T$-algebra 
\begin{eqnarray*}
V({\bf q},{\mathcal A}): & = & 
\frac{T \langle x_{1} , \ldots ,x_{n} \rangle}{( x_{j} x_{i} -q_{ij} 
x_{i} x_{j} -a_{i j} \mid i \neq j )}.
\end{eqnarray*}
Consider a filtration on $V({\bf q},{\mathcal A})$ with $\deg x_i=1$
and $\det t=0$ for all $t\in T$. Suppose that 
\begin{equation}
\label{E6.0.1}\tag{E6.0.1}
{\text{$\gr V({\bf q},{\mathcal A})$ is naturally isomorphic to
$T_{\bf q}[x_1,\cdots,x_n]$.}}
\end{equation} 
Consider the hypothesis that
\begin{equation}
\label{E6.0.2}\tag{E6.0.2}
{\text{for any pair $(i,j)$, $a_{ij}=0$ whenever $q_{ij}=1$.}}
\end{equation}

\begin{proposition}
\label{xxpro6.1}
Suppose \eqref{E6.0.1} and \eqref{E6.0.2} and let $A=V({\bf q},{\mathcal A})$.
If the center $Z (\gr A )$ is a polynomial ring, then so 
is $Z ( A )$, and $Z(A)\cong Z (\gr A)$.
\end{proposition}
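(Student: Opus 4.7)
My plan is to identify the polynomial generators of $Z(\gr A)$ explicitly, lift them to central elements of $A$, and then reduce an arbitrary central element to these lifts by an induction on filtration degree.

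First, since $Z(\gr A)$ is a polynomial ring and $\gr A \cong T_{\bf q}[x_1,\ldots,x_n]$ by \eqref{E6.0.1}, Lemma \ref{xxlem4.3} gives
$$Z(\gr A) = T[x_1^{L_1},\ldots,x_n^{L_n}], \qquad L_i = \operatorname{lcm}\{d_{ij} \mid 1 \le j \le n\}.$$
The crux of the proof is to show that each $x_i^{L_i}$ is already central in $A$. Fix $i \ne j$. Starting from $x_j x_i = q_{ij} x_i x_j + a_{ij}$, an easy induction on $m$ gives
$$x_j x_i^{m} = q_{ij}^{m} x_i^{m} x_j + a_{ij} \Bigl( \sum_{k=0}^{m-1} q_{ij}^{k} \Bigr) x_i^{m-1}.$$
Specializing to $m = L_i$ and using $d_{ij} \mid L_i$, the leading coefficient $q_{ij}^{L_i}$ equals $1$. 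For the correction term there are two cases: if $q_{ij} \ne 1$, then $\sum_{k=0}^{L_i-1} q_{ij}^{k} = (q_{ij}^{L_i}-1)/(q_{ij}-1) = 0$; if $q_{ij} = 1$, then $a_{ij} = 0$ by hypothesis \eqref{E6.0.2}. Either way the correction vanishes, so $x_j$ commutes with $x_i^{L_i}$. This is the main obstacle: it is precisely where \eqref{E6.0.2} and the cyclotomic identity enter essentially.

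Next, set $B := T[x_1^{L_1},\ldots,x_n^{L_n}] \subseteq Z(A)$. Because $\gr A$ is a domain, $A$ is a domain, and the leading terms of $x_1^{L_1},\ldots,x_n^{L_n}$ in $\gr A$ are algebraically independent generators of $Z(\gr A)$; any nontrivial polynomial relation in $B$ would descend to one in $\gr A$, which is impossible. Hence $B$ is itself a polynomial ring over $T$ in the generators $x_i^{L_i}$.

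It remains to show $Z(A) \subseteq B$, which I will prove by induction on the filtration degree $d$ of an element $z \in Z(A)$. The base case $d = 0$ is immediate since $F_0 A = T \subseteq B$. For the inductive step, observe that $\gr A$ being a domain implies $\gr(za) = (\gr z)(\gr a)$ and $\gr(az) = (\gr a)(\gr z)$ whenever the products are nonzero; from $za = az$ one concludes $\gr z \in Z(\gr A) = \gr B$. Writing $\gr z = \sum c_\alpha \prod_i x_i^{\alpha_i L_i}$ with $\sum \alpha_i L_i = d$, the obvious lift $\tilde z := \sum c_\alpha \prod_i x_i^{\alpha_i L_i} \in B$ satisfies $\gr \tilde z = \gr z$ (since distinct monomials $\prod_i x_i^{\alpha_i L_i}$ have linearly independent leading terms in $\gr A$). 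Thus $z - \tilde z \in Z(A)$ has filtration degree strictly less than $d$, hence lies in $B$ by induction, and so $z \in B$. The identification $Z(A) \cong Z(\gr A)$ then follows, as both are polynomial rings in $n$ variables over $T$ whose generators correspond under $x_i^{L_i} \longleftrightarrow x_i^{L_i}$.
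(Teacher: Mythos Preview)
Your proof is correct and follows essentially the same approach as the paper's: identify $Z(\gr A)=T[x_1^{L_1},\ldots,x_n^{L_n}]$ via Lemma~\ref{xxlem4.3}, verify that each $x_i^{L_i}$ is central in $A$ via the same two-case analysis (using \eqref{E6.0.2} when $q_{ij}=1$ and the vanishing of the $q$-integer when $q_{ij}\ne 1$), and then use $\gr Z(A)\subseteq Z(\gr A)$ together with induction on degree to conclude $Z(A)=B$. You supply more detail than the paper (an explicit commutation formula including the $x_i^{m-1}$ factor, and an explicit lifting step in the induction), but the strategy is identical.
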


\begin{proof} If $Z (\gr A )$ is a polynomial ring, then 
$Z(\gr A)=T[x_1^{L_1},\cdots,x_n^{L_n}]$ where 
$L_i={\text{lcm}}\{ d_{ij}\mid j=1,\cdots,n\}$ [Lemma \ref{xxlem4.3}].
Recall that $d_{ij}$ is the order of $q_{ij}$. 

First we claim that $x_i^{L_i}$ is in the center of $A$. For each $j$,
we have $x_j x_i=q_{ij} x_i x_j+a_{ij}$. If $q_{ij}=1$, then $x_j$ commutes
with $x_i$ by hypothesis \eqref{E6.0.2}, so $x_j$ commutes with $x_i^{L_i}$. 
If $q_{ij} \neq 1$, then the order of $q_{ij}$ is $d_{ij}$. The equation
$x_jx_i=q_{ij}x_ix_j+a_{ij}$ implies that $x_j$ commutes with $x_i^{d_{ij}}$, as each $x_j x_i^k = q_{ij}^k x_i^k x_j + (1 + q_{ij} + \cdots + q_{ij}^{k-1}) a_{ij}$.
Since $d_{ij}$ divides $L_i$, $x_j$ commutes with $x_i^{L_i}$ for all $j\neq i$.
This shows that $x_i^{L_i}$ is central. 

Since $\gr A$ is the skew polynomial ring $T_{\bf q}[x_1,\cdots,x_n]$, 
it is easy to check that $\gr Z(A)\subset Z(\gr A)$. Since $Z(\gr A)$ 
is generated by $\{x_i^{L_i}\}_{i=1}^n$, then induction on 
the degree of element $f\in Z(A)$ shows that $f$ is generated by $x_i^{L_i}$.
Therefore the assertion follows.
\end{proof}

\begin{proposition}
\label{xxpro6.2}
Retain the above notation and suppose \eqref{E6.0.1}. 
If $a_{i j} \neq 0$ for some $i\neq j$, then $q_{i k} q_{j k} =1$ for all 
$k \neq i$ or $j$. 
\end{proposition}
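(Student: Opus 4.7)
My approach exploits the associativity of multiplication in $V({\bf q},{\mathcal A})$ via a Jacobi-type relation on triples of generators. By hypothesis \eqref{E6.0.1}, the associated graded ring is the skew polynomial ring $T_{{\bf q}}[x_1,\dots,x_n]$, so $V({\bf q},{\mathcal A})$ admits a PBW $T$-basis $\{x_1^{\alpha_1}\cdots x_n^{\alpha_n}\}$. In particular, distinct PBW monomials are $T$-linearly independent in $V({\bf q},{\mathcal A})$, which will be the crucial ingredient allowing us to compare coefficients in what follows.

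Given three distinct indices sorted as $a<b<c$, I plan to expand the product $x_c x_b x_a$ into PBW form in two ways, grouping first as $(x_cx_b)x_a$ and then as $x_c(x_bx_a)$, using the three defining relations $x_bx_a = q_{ab}x_ax_b + a_{ab}$, $x_cx_a = q_{ac}x_ax_c + a_{ac}$, and $x_cx_b = q_{bc}x_bx_c + a_{bc}$. A direct computation shows that each expansion yields $q_{ab}q_{ac}q_{bc}\,x_ax_bx_c$ as the leading term, together with a $T$-linear combination of the degree-one monomials $x_a,x_b,x_c$. Equating the two expansions and comparing coefficients of $x_a$, $x_b$, $x_c$ produces the three scalar identities
\[
a_{bc}(1-q_{ab}q_{ac})=0,\quad a_{ac}(q_{bc}-q_{ab})=0,\quad a_{ab}(1-q_{ac}q_{bc})=0.
\]

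To finish, I assume $i<j$ and $a_{ij}\ne 0$, and handle an arbitrary $k\ne i,j$ by placing $k$ in its proper position in the sorted triple. If $k>j$, take $(a,b,c)=(i,j,k)$; then $a_{ij}=a_{ab}\ne 0$ forces $q_{ik}q_{jk}=q_{ac}q_{bc}=1$. If $i<k<j$, take $(a,b,c)=(i,k,j)$; then $a_{ij}=a_{ac}\ne 0$ forces $q_{kj}=q_{bc}=q_{ab}=q_{ik}$, whence $q_{ik}q_{jk}=q_{kj}q_{jk}=1$. If $k<i$, take $(a,b,c)=(k,i,j)$; then $a_{ij}=a_{bc}\ne 0$ forces $q_{ki}q_{kj}=q_{ab}q_{ac}=1$, and using $q_{ki}=q_{ik}^{-1}$, $q_{kj}=q_{jk}^{-1}$ we conclude $q_{ik}q_{jk}=1$.

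The bulk of the work is the Jacobi-type expansion, which is routine but bookkeeping-heavy; the only delicate point is invoking \eqref{E6.0.1} to justify the independent comparison of coefficients of $x_a,x_b,x_c$, which follows immediately from $T$-freeness of the PBW basis of $V({\bf q},{\mathcal A})$.
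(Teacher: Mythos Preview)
Your proof is correct and follows essentially the same approach as the paper: both arguments expand a triple product of generators in two different ways using associativity and compare coefficients of the linear terms, invoking \eqref{E6.0.1} to guarantee the PBW monomials are $T$-linearly independent. The only difference is cosmetic: the paper works directly with $x_k x_j x_i$ for arbitrary distinct $i,j,k$ and reads off the single coefficient of $x_k$, whereas you first sort the triple as $a<b<c$, extract all three scalar identities, and then run a short case analysis on the position of $k$; both routes yield the same conclusion.
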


\begin{proof}
We resolve $x_{k} x_{j} x_{i}$ in two different ways,
\begin{eqnarray*}
    ( x_{k} x_{j} ) x_{i} & = & ( q_{j k} x_{j} x_{k} +a_{j k} ) x_{i}\\
    & = & q_{j k} x_{j} ( x_{k} x_{i} ) +a_{j k} x_{i}\\
    & = & q_{j k} x_{j} ( q_{i k} x_{i} x_{k} +a_{i k} ) +a_{j k} x_{i}\\
    & = & q_{j k} q_{i k} ( x_{j} x_{i} ) x_{k} +q_{j k} a_{i k} x_{j} +a_{j
    k} x_{i}\\
    & = & q_{j k} q_{i k} ( q_{i j} x_{i} x_{j} +a_{i j} ) x_{k} +q_{j k}
    a_{i k} x_{j} +a_{j k} x_{i}\\
    & = & q_{j k} q_{i k} q_{i j} x_{i} x_{j} x_{k} +q_{j k} q_{i k} a_{i j}
    x_{k} +q_{j k} a_{i k} x_{j} +a_{j k} x_{i}
\end{eqnarray*}
and similarly,
\begin{eqnarray*}
    x_{k} ( x_{j} x_{i} ) & = & x_{k} ( q_{i j} x_{i} x_{j} +a_{i j} )\\
    & = & q_{i j} ( x_{k} x_{i} ) x_{j} +a_{i j} x_{k}\\
    & = & q_{i j} ( q_{i k} x_{i} x_{k} +a_{i k} ) x_{j} +a_{i j} x_{k}\\
    & = & q_{i j} q_{i k} x_{i} ( x_{k} x_{j} ) +q_{i j} a_{i k} x_{j} +a_{i
    j} x_{k}\\
    & = & q_{i j} q_{i k} q_{j k} x_{i} x_{j} x_{k} +q_{i j} q_{i k} a_{j k}
    x_{i} +q_{i j} a_{i k} x_{j} +a_{i j} x_{k}
  \end{eqnarray*}
Comparing the coefficients of $x_{k}$ gives the result.
\end{proof}

When an algebra $A$ is finitely generated and free over its center
(as in the situation of Proposition \ref{xxpro6.1}), one should be able 
to compute the discriminant of $A$ over its center. We give an example 
here. 

\begin{example}
\label{xxex6.3} Let $A$ be generated by $x_1,x_2,x_3,x_4$ subject to the 
relations
\begin{align}\notag
x_3 x_1-x_1x_2 =0, &\quad x_4x_2+x_2x_4=0,\\
\label{E6.3.1}\tag{E6.3.1}
x_3 x_2-x_2x_3 =0, &\quad x_3x_4+x_4x_3=0,\\
\notag
x_4 x_1+x_1x_4 =0, &\quad x_1x_2+x_2x_1=x_3^2+x_4^2.
\end{align}
This is the example in \cite[Lemma 1.1]{VVR} (with $\lambda=0$). 
It is an iterated Ore extension, and therefore, Artin-Schelter regular
of global dimension four. 

It is not hard to check that the center of $A$ is generated by $x_i^2$. 
This algebra is a factor ring of the algebra $B$ over $T:=k[t]$ generated by 
$x_1,x_2,x_3,x_4$ subject to the 
relations
\begin{align}
\notag
x_3 x_1-x_1x_2 =0, &\quad x_4x_2+x_2x_4=0,\\
\label{E6.3.2}\tag{E6.3.2}
x_3 x_2-x_2x_3 =0, &\quad x_3x_4+x_4x_3=0,\\
\notag
x_4 x_1+x_1x_4 =0, &\quad x_1x_2+x_2x_1=t.
\end{align}

Note that $\gr B$ is a skew polynomial ring over $T$ with the above 
relations by setting $t=0$. The $Y$-matrix is
$$\begin{pmatrix} 0& 1/2 &0 & 1/2\\
-1/2& 0& 0& 1/2\\
0&0&0& 1/2\\
-1/2&-1/2&-1/2&0\end{pmatrix}.$$
By Corollary \ref{xxcor5.5}(b), $B$ has center 
$T[x_1^2,x_2^2,x_3^2,x_4^2]$. By the next lemma the discriminant
of $B$ over its center is  $2^{48} (4 x_1^2 x_2^2 -t^2)^8
x_3^{16}x_4^{16}$. By Lemma \ref{xxlem1.2},
the discriminant of $A$ over its center is  $2^{48} 
(4 x_1^2 x_2^2 -(x_3^2+x_4^2)^2)^8 x_3^{16}x_4^{16}$.
We will see in the next sections that 
${\mathbb D}(A)=A$. As a consequence of 
Theorem \ref{xxthm0.5}, $A$ is cancellative and 
the automorphism group of $A$ is affine.
\end{example}

\begin{lemma}
\label{xxlem6.4} Suppose the $k[t]$-algebra $B$ is generated by 
$\{x_1,x_2,x_3,x_4\}$ subject to the six relations given \eqref{E6.3.2}.
Then the discriminant of $B$ over its center is $2^{48} (4 x_1^2 x_2^2 -t^2)^8
x_3^{16}x_4^{16}$.
\end{lemma}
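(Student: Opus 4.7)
The plan is to compute the discriminant directly by exploiting a grading on $B$ that makes the $16\times 16$ trace matrix block-diagonal with eight $2\times 2$ blocks. First I would verify that $Z(B)=T[x_1^2,x_2^2,x_3^2,x_4^2]$ and that $B$ is free over $Z(B)$ of rank $16$ with basis $\mathcal{B}=\{x_1^a x_2^b x_3^c x_4^d:a,b,c,d\in\{0,1\}\}$; this is already asserted in Example~\ref{xxex6.3} via Proposition~\ref{xxpro6.1} together with Corollary~\ref{xxcor5.5}(b).

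The key observation is that although the naive $(\mathbb{Z}/2)^4$-grading of $B$ fails---the relation $x_2 x_1 + x_1 x_2 = t$ equates elements of parity $(1,1,0,0)$ with the scalar $t$ of parity $(0,0,0,0)$---the coarser $(\mathbb{Z}/2)^3$-grading assigning to $x_1^a x_2^b x_3^c x_4^d$ the parity $(a+b\bmod 2,\,c\bmod 2,\,d\bmod 2)$ survives, since both sides of every relation lie in the same parity. This yields a decomposition $B=\bigoplus_{p\in(\mathbb{Z}/2)^3} B_p$ with each $B_p$ free of rank $2$ over $Z$. The trace $\tr\colon B\to Z$ kills $B_p$ for $p\neq 0$ (left multiplication by such an element shifts parity and has no diagonal entries in $\mathcal{B}$), while $\tr(z)=16z$ for $z\in Z$. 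Consequently $\tr(z_i z_j)=0$ whenever $z_i$ and $z_j$ have different parities, and the trace matrix splits into eight $2\times 2$ blocks indexed by $p$.

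The only nontrivial off-diagonal trace needed is $\tr(x_1 x_2)=8t$, obtained by expanding $(x_1 x_2)(x_1^a x_2^b x_3^c x_4^d)$ via $x_2 x_1=-x_1 x_2+t$: each of the eight basis elements with $a=1$ contributes a diagonal entry $t$. Together with $\tr|_Z=16\cdot\mathrm{id}$, this pins down every block. The four blocks of parity $(0,c,d)$ each equal $\pm x_3^{2c} x_4^{2d}\begin{pmatrix}16 & 8t \\ 8t & -16 x_1^2 x_2^2 + 8t^2\end{pmatrix}$, with determinant $-64(4x_1^2 x_2^2-t^2)\,x_3^{4c}x_4^{4d}$; the four blocks of parity $(1,c,d)$ each equal $\pm x_3^{2c} x_4^{2d}\begin{pmatrix}16 x_1^2 & 8t \\ 8t & 16 x_2^2\end{pmatrix}$, with determinant $+64(4x_1^2 x_2^2-t^2)\,x_3^{4c}x_4^{4d}$. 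Multiplying the eight block determinants---the four minus signs combine to $+1$, and each of $x_3^4$ and $x_4^4$ appears in exactly four of the eight blocks---produces $64^8 (4x_1^2 x_2^2-t^2)^8 x_3^{16} x_4^{16}=2^{48}(4x_1^2 x_2^2-t^2)^8 x_3^{16} x_4^{16}$.

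The main obstacle is identifying the correct grading: a naive $(\mathbb{Z}/2)^4$-grading would diagonalize the trace matrix but is destroyed by the deformation term $t$, while the $(\mathbb{Z}/2)^3$-grading that amalgamates the $x_1$- and $x_2$-parities is the unique grading that both survives the deformation and is still fine enough to force the trace matrix into $2\times 2$ blocks. Once this structural observation is made, the remaining computations are straightforward bookkeeping in signs and powers of $64$.
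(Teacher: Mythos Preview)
Your proof is correct and takes a more structural route than the paper's own argument. The paper simply writes out the full $16\times 16$ trace matrix explicitly (with entries in $Z(B)$) and then computes its determinant with a computer algebra system. Your observation that the $(\mathbb{Z}/2)^3$-grading by $(a+b,c,d)\bmod 2$ survives the deformation and forces the trace matrix into eight $2\times 2$ blocks lets you compute the determinant by hand; it also explains \emph{why} the answer factors as a perfect eighth power times $x_3^{16}x_4^{16}$, which the brute-force computation does not. The block determinants and the bookkeeping of signs and powers you give are all correct. The trade-off is that the paper's approach requires no structural insight but a machine, while yours requires identifying the right grading but then reduces the computation to elementary $2\times 2$ determinants; your method would also adapt more readily to analogous algebras with the same sign pattern among the generators.
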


\begin{proof}[Sketch of the Proof]
It is routine to check that the center of $B$ is 
$$Z(B)=k[t][x_1^2, x_2^2, x_3^2, x_4^2].$$ 
The algebra $B$ is a free module over $Z(B)$ of rank 16 with a 
$Z(B)$-basis $\{x_1^a x_2^b x_3^c x_4^d\mid a,b,c,d =0,1\}$. 
Let $\{z_1,\cdots z_{16}\}$ be the above $Z(B)$-basis. Then 
we can compute the matrix $(\mathrm{tr}(z_i z_j))_{16\times 16}$,
which is

{\tiny
\begin{eqnarray*}
\left( \begin{array}{rrrrrrrrrrrrrrrr}
    16 & 0 & 0 & 0 & 0 & 8 t & 0 & 0 & 0 & 0 & 0 & 0 & 0 & 0 & 0 & 0\\
    0 & 16 a & 8 t & 0 & 0 & 0 & 0 & 0 & 0 & 0 & 0 & 0 & 0 & 0 & 0 & 0\\
    0 & 8 t & 16 b & 0 & 0 & 0 & 0 & 0 & 0 & 0 & 0 & 0 & 0 & 0 & 0 & 0\\
    0 & 0 & 0 & 16 c & 0 & 0 & 0 & 0 & 0 & 0 & 0 & 8 c t & 0 & 0 & 0 & 0\\
    0 & 0 & 0 & 0 & 16 d & 0 & 0 & 0 & 0 & 0 & 0 & 0 & 8 d t & 0 & 0 & 0\\
    8 t & 0 & 0 & 0 & 0 & \alpha & 0 & 0 & 0 & 0 & 0 & 0 & 0 & 0 &
    0 & 0\\
    0 & 0 & 0 & 0 & 0 & 0 & 16 a c & 0 & 8 c t & 0 & 0 & 0 & 0 & 0 & 0 & 0\\
    0 & 0 & 0 & 0 & 0 & 0 & 0 & -16 a d & 0 & -8 d t & 0 & 0 & 0 & 0 & 0 & 0\\
    0 & 0 & 0 & 0 & 0 & 0 & 8 c t & 0 & 16 b c & 0 & 0 & 0 & 0 & 0 & 0 & 0\\
    0 & 0 & 0 & 0 & 0 & 0 & 0 & -8 d t & 0 & -16 b d & 0 & 0 & 0 & 0 & 0 & 0\\
    0 & 0 & 0 & 0 & 0 & 0 & 0 & 0 & 0 & 0 & -16 c d & 0 & 0 & 0 & 0 & -8 c d
    t\\
    0 & 0 & 0 & 8 c t & 0 & 0 & 0 & 0 & 0 & 0 & 0 & \beta & 0
    & 0 & 0 & 0\\
    0 & 0 & 0 & 0 & 8 d t & 0 & 0 & 0 & 0 & 0 & 0 & 0 &  \gamma
    & 0 & 0 & 0\\
    0 & 0 & 0 & 0 & 0 & 0 & 0 & 0 & 0 & 0 & 0 & 0 & 0 & 16 a c d & 8 c d t &
    0\\
    0 & 0 & 0 & 0 & 0 & 0 & 0 & 0 & 0 & 0 & 0 & 0 & 0 & 8 c d t & 16 b c d &
    0\\
    0 & 0 & 0 & 0 & 0 & 0 & 0 & 0 & 0 & 0 & -8 c d t & 0 & 0 & 0 & 0 & \delta
  \end{array} \right)
\end{eqnarray*}
}
where $\alpha=-16 a b + 8 t^{2},
\beta=-16 a b c + 8 c t^{2},
\gamma=-16 a b d + 8 d t^{2},
\delta=16 a bc d - 8 c d t^{2}$, and 
$a=x_1^2, b=x_2^2, c=x_3^2, d=x_4^2$. We skip the details in  
computing the above traces. By using Maple, its determinant is
$2^{48} (4 x_1^2 x_2^2 -t^2)^8 x_3^{16}x_4^{16}$.
\end{proof}

\section{Three subalgebras}
\label{xxsec7}

In this section we discuss three (possibly different) subalgebras
of $A$, all of which are helpful for the applications in the next
section. 

\subsection{Makar-Limanov Invariants}
\label{xxsec7.1}

The first subalgebra is the Makar-Limanov Invariants of $A$ 
introduced by Makar-Limanov \cite{Ma1}. This invariant has been very 
useful in commutative algebra.
For any $k$-algebra $A$, let ${\rm Der}(A)$ denote the 
set of all $k$-derivations of $A$ and $\LND(A)$ denote the 
set of locally nilpotent $k$-derivations of $A$. 

\begin{definition}
\label{xxdef7.1}
Let $A$ be an algebra over $k$.
\begin{enumerate}
\item[(1)]
The {\it Makar-Limanov invariant} \cite{Ma1} of $A$ is defined to be 
\begin{equation}
\label{E7.1.1}\tag{E7.1.1}
{\rm ML}(A) \ = \ \bigcap_{\delta\in {\rm LND}(A)} {\rm ker}(\delta).
\end{equation}
\item[(2)]
We say that $A$ is \emph{$\LND$-rigid} if ${\rm ML}(A)=A$,
or $\LND(A)=\{0\}$. 
\item[(3)]
We say that $A$ is \emph{strongly $\LND$-rigid} if ${\rm ML}
(A[t_1,\cdots,t_d])=A$ for all $d\geq 0$.
\end{enumerate}
\end{definition}
The following lemma is clear. Part (2) follows
from the fact that $\partial\in \LND(A)$ if and only if 
$g^{-1} \partial g\in \LND(A)$.

\begin{lemma}
\label{xxlem7.2}
Let $A$ be an algebra.
\begin{enumerate}
\item[(1)]
${\rm ML}(A)$ is a subalgebra of $A$.
\item[(2)]
For any $g\in \Aut(A)$, $g({\rm ML}(A))={\rm ML}(A)$.
\end{enumerate}
\end{lemma}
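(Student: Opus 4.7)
The plan is straightforward, since the lemma is essentially formal once one unpacks the definition \eqref{E7.1.1}. I would handle the two parts separately.

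For part (1), I would argue that for each $\delta \in \LND(A)$, the kernel $\ker(\delta)$ is a $k$-subalgebra of $A$: it contains $1$ (since $\delta(1) = 0$ for any $k$-derivation), it is closed under addition and $k$-scalar multiplication by linearity of $\delta$, and it is closed under multiplication because the Leibniz rule $\delta(ab) = \delta(a)b + a\delta(b)$ forces $\delta(ab) = 0$ whenever $\delta(a) = \delta(b) = 0$. Since an arbitrary intersection of subalgebras is a subalgebra, the definition \eqref{E7.1.1} immediately yields that $\mathrm{ML}(A)$ is a subalgebra of $A$.

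For part (2), I would use the hint in the paper. Given $g \in \Aut(A)$, the conjugation map $\delta \mapsto g^{-1} \delta g$ is a bijection of $\mathrm{Der}(A)$ that preserves local nilpotence (since $(g^{-1}\delta g)^n = g^{-1} \delta^n g$), so it restricts to a bijection $\LND(A) \to \LND(A)$. Now for $a \in \mathrm{ML}(A)$ and any $\delta \in \LND(A)$, apply this to $\delta' := g^{-1} \delta g \in \LND(A)$: since $a \in \ker(\delta')$, we have $g^{-1}\delta g(a) = 0$, equivalently $\delta(g(a)) = 0$. As $\delta$ was arbitrary, $g(a) \in \mathrm{ML}(A)$, giving $g(\mathrm{ML}(A)) \subseteq \mathrm{ML}(A)$. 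Applying the same inclusion to $g^{-1}$ yields the reverse containment, hence equality.

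There is no real obstacle here; the only small point to make explicit is the verification that conjugation by an automorphism sends $\LND(A)$ into itself, which is precisely the statement the author uses. Both parts are at the level of definition-chasing, consistent with the author's remark that the lemma is clear.
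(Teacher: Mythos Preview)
Your proposal is correct and follows exactly the approach the paper indicates: the paper simply declares the lemma ``clear'' and notes that part (2) follows from the fact that $\partial \in \LND(A)$ if and only if $g^{-1}\partial g \in \LND(A)$, which is precisely the key observation you verify and use. Your write-up just supplies the routine details behind that one-line justification.
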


\subsection{Divisor subalgebras}
\label{xxsec7.2}
Throughout this subsection let $A$ be a domain containing 
${\mathbb Z}$. Let $F$ be a subset of $A$. Let $Sw(F)$ be the set 
of $g\in A$ such that $f=agb$ for some $a,b\in A$ and $0\neq f\in F$. 
Here $Sw$ stands for ``sub-word'', which can be viewed as a divisor.

\begin{definition}
\label{xxdef7.3} Let $F$ a subset of $A$.
\begin{enumerate}
\item[(1)]
Let $D_{0} ( F ) =F$. Inductively define $D_{n} ( F )$ as the $k$-subalgebra
of $A$ generated by $S w ( D_{n-1} ( F ) )$. The subalgebra $D ( F ) =
\bigcup_{n \geq 0} D_{n} ( F )$ is called the 
{\it $F$-divisor subalgebra of $A$}. If $F$ is the singleton $\{f\}$, 
we simply write $D(\{f\})$ as $D(f)$
\item[(2)]
If $f=d(A/Z)$ (if it exists), we call $D(f)$ {\it the 
discriminant-divisor subalgebra of $A$} or {\it DDS of $A$},
and write it as ${\mathbb D}(A)$.
\end{enumerate}
\end{definition}

The following lemma is well-known \cite[p. 4]{Ma2}.

\begin{lemma}
\label{xxlem7.4}
Let $x,y$ be nonzero elements in $A$ and let
$\partial\in \LND(A)$. If $\partial(xy)=0$, then 
$\partial(x)=\partial(y)=0$.
\end{lemma}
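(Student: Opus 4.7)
The plan is to exploit the standard degree function associated to a locally nilpotent derivation and show it is additive on products.

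For each $0 \neq a \in A$, define $\deg_\partial(a) = \max\{n \geq 0 : \partial^n(a) \neq 0\}$; this is a nonnegative integer because $\partial$ is locally nilpotent and $\partial^0(a) = a \neq 0$. First I would record the noncommutative Leibniz formula
\[
\partial^k(xy) \;=\; \sum_{i+j=k} \binom{k}{i}\, \partial^i(x)\, \partial^j(y),
\]
which holds for any derivation of an associative (not necessarily commutative) algebra and is proved by induction on $k$ using $\partial(uv) = \partial(u) v + u \partial(v)$.

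Next, set $n = \deg_\partial(x)$ and $m = \deg_\partial(y)$, both finite and nonnegative. Applying the above formula with $k = n+m$, every term with $i > n$ vanishes because $\partial^i(x)=0$, and every term with $i < n$ forces $j = n+m-i > m$ and so $\partial^j(y)=0$. Only the $(i,j)=(n,m)$ term survives, giving
\[
\partial^{n+m}(xy) \;=\; \binom{n+m}{n}\, \partial^n(x)\, \partial^m(y).
\]
Since $x,y \neq 0$, the factors $\partial^n(x)$ and $\partial^m(y)$ are nonzero by definition of the degree, and since $A$ is a domain their product is nonzero. Moreover, $A$ contains $\mathbb{Z}$ and is a domain, so $A$ has characteristic zero and the positive integer $\binom{n+m}{n}$ is a nonzero scalar in $A$. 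Therefore $\partial^{n+m}(xy) \neq 0$, which gives $\deg_\partial(xy) = n+m$.

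To finish, observe that $xy \neq 0$ (again because $A$ is a domain), and the hypothesis $\partial(xy)=0$ says exactly $\deg_\partial(xy) \leq 0$. Combined with the equality just established, $n + m \leq 0$, so $n = m = 0$, which is the conclusion $\partial(x) = \partial(y) = 0$. The only subtle point is the characteristic-zero input: without $\mathbb{Z} \subseteq A$ and the domain hypothesis, the binomial coefficient could vanish in $A$ and the additivity $\deg_\partial(xy) = \deg_\partial(x) + \deg_\partial(y)$ would fail — so that is the only place care is needed, and it is handled by the standing assumption.
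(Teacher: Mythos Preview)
Your proof is correct and follows essentially the same route as the paper: both introduce the nilpotency degree $\deg_\partial$, apply the Leibniz formula at level $n+m$ to obtain $\partial^{n+m}(xy)=\binom{n+m}{n}\partial^n(x)\partial^m(y)\neq 0$, and conclude $n+m=0$. Your write-up is simply more explicit about why the binomial coefficient and the product are nonzero (invoking the standing hypotheses that $A$ is a domain containing $\mathbb{Z}$), which the paper leaves implicit.
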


\begin{proof} Let $m$ and $n$ be the largest integers such that
$\partial^m(x)\neq 0$ and $\partial^n(y)\neq 0$. Then
the product rule and the choice of $m,n$ imply that
$$\partial^{m+n}(xy)=\sum_{i=0}^{m+n} {n+m\choose i}
\partial^i(x)\partial^{m+n-i}(y)={n+m\choose m}
\partial^m(x)\partial^n(y)\neq 0.$$ 
So $m+n=0$. The assertion follows.
\end{proof}

\begin{lemma}
\label{xxlem7.5}
Let $F$ be a subset of ${\rm ML}(A)$. Then $D(F)\subseteq {\rm ML}(A)$.
\end{lemma}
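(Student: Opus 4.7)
The plan is to prove the statement by induction on the level $n$ in the filtration $D_{0}(F) \subseteq D_{1}(F) \subseteq \cdots$, showing $D_{n}(F) \subseteq {\rm ML}(A)$ for every $n \geq 0$. The base case $n=0$ is nothing but the hypothesis $F = D_{0}(F) \subseteq {\rm ML}(A)$. For the inductive step I will exploit two facts: first, that ${\rm ML}(A)$ is closed under the algebra operations by Lemma \ref{xxlem7.2}(1), which reduces the problem from all of $D_{n}(F)$ to its generating set $Sw(D_{n-1}(F))$; second, that locally nilpotent derivations have the ``factor-killing'' property expressed in Lemma \ref{xxlem7.4}.

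Concretely, given an arbitrary $g \in Sw(D_{n-1}(F))$, I will unpack the definition to produce $a,b \in A$ and a nonzero $f \in D_{n-1}(F)$ with $f = agb$. Since $A$ is a domain and $f \ne 0$, each of $a$, $g$, $b$ is nonzero, so every factorization below meets the nonzero hypothesis of Lemma \ref{xxlem7.4}. Fix any $\partial \in \LND(A)$. The inductive hypothesis $D_{n-1}(F) \subseteq {\rm ML}(A)$ gives $\partial(f) = 0$. Two successive applications of Lemma \ref{xxlem7.4} will finish the job: first view $f = (ag) \cdot b$ to conclude $\partial(ag) = 0$, then view $ag = a \cdot g$ to conclude $\partial(g) = 0$. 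Since $\partial \in \LND(A)$ was arbitrary, $g \in {\rm ML}(A)$, which gives $Sw(D_{n-1}(F)) \subseteq {\rm ML}(A)$, and hence $D_{n}(F) \subseteq {\rm ML}(A)$ by Lemma \ref{xxlem7.2}(1). Taking the union over $n$ yields $D(F) \subseteq {\rm ML}(A)$.

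I do not anticipate a genuine obstacle here: Lemma \ref{xxlem7.4} is tailor-made for peeling off ring-theoretic factors from a product killed by a locally nilpotent derivation, and the standing domain hypothesis on $A$ (explicit at the start of Section \ref{xxsec7.2}) ensures that all of $a$, $g$, $b$ are nonzero so that the lemma is applicable at each step. The only conceptual ingredient is recognizing that the recursive definition of $D(F)$ is exactly the right match for an induction driven by Lemma \ref{xxlem7.4}.
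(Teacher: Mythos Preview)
Your proof is correct and follows essentially the same approach as the paper: induction on $n$ to show $D_n(F)\subseteq {\rm ML}(A)$, with the inductive step driven by Lemma~\ref{xxlem7.4} to peel factors off a product killed by an arbitrary $\partial\in\LND(A)$. The only cosmetic difference is that the paper fixes $\partial$ first and then runs the induction, while you fix $n$ and quantify over $\partial$ inside the step; both orderings are equivalent.
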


\begin{proof} Let $\partial$ be any element in $\LND(A)$.
By hypothesis, $\partial(f)=0$ for all $f\in F$.
By Lemma \ref{xxlem7.4}, $\partial(x)=0$ for all $x\in Sw(F)$. 
So $\partial=0$ when restricted to $D_1(F)$. By 
induction, $\partial=0$ when restricted to $D(F)$. The assertion
follows by taking arbitrary $\partial\in \LND(A)$.
\end{proof}

\begin{lemma}
\label{xxlem7.6} Suppose $d(A/Z)$ is defined.
Then the DDS ${\mathbb D}(A)$ is preserved by all
$g\in \Aut(A)$.
\end{lemma}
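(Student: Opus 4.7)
The plan is to first show that every $g\in\Aut(A)$ rescales the discriminant $f:=d(A/Z)$ by a central unit, and then to lift this fact up the inductive construction of the layers $D_n(\{f\})$ by a uniform induction on $n$.

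First I would observe that $g(Z)=Z$ (in the settings where ${\mathbb D}(A)$ is used, $Z$ is the full center, which is canonically preserved by any automorphism), so that $g|_Z$ is a ring automorphism of $Z$. Given a $Z$-basis $\{z_1,\dots,z_r\}$ of $A$, pushing the decomposition $A=\bigoplus_i Z z_i$ forward under $g$ yields $A=g(A)=\bigoplus_i Z\, g(z_i)$, so $\{g(z_i)\}$ is again a $Z$-basis of $A$. The hypotheses of Lemma~\ref{xxlem1.2} then apply to $g\colon A\to A$ with $A'=A$ and $Z'=Z$, giving $g(f)=c f$ for some $c\in Z^{\times}\subseteq A^{\times}$.

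Next I would prove by induction on $n\geq 1$ that $g(D_n(\{f\}))=D_n(\{f\})$. For the base case, suppose $h\in Sw(\{f\})$, witnessed by $f=ahb$; applying $g$ gives $c f = g(a)\, g(h)\, g(b)$, so $f=(c^{-1}g(a))\, g(h)\, g(b)$, showing $g(h)\in Sw(\{f\})$. Running the same computation with $g^{-1}$ (which rescales $f$ by $c^{-1}$) yields the reverse inclusion, so $g(Sw(\{f\}))=Sw(\{f\})$, and passing to the generated subalgebra gives $g(D_1(\{f\}))=D_1(\{f\})$. The inductive step is structurally identical: once $g(D_{n-1}(\{f\}))=D_{n-1}(\{f\})$, any witness $s=ahb$ with $0\neq s\in D_{n-1}(\{f\})$ transports under $g$ to $g(s)=g(a) g(h) g(b)$ with $0\neq g(s)\in D_{n-1}(\{f\})$ (using the inductive hypothesis and injectivity of $g$), so $g$ permutes $Sw(D_{n-1}(\{f\}))$ and hence preserves $D_n(\{f\})$.

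Taking the union absorbs the $D_0$-layer since $f\in Sw(\{f\})\subseteq D_1(\{f\})$, so $g({\mathbb D}(A))={\mathbb D}(A)$. The only genuinely substantive point is the invocation of Lemma~\ref{xxlem1.2}; once it is known that $g(f)=cf$ for some $c\in A^{\times}$, the defining relation $f=ahb$ of $Sw$ is manifestly preserved by $g$ (absorb $c^{-1}$ into $a$), and the rest is a routine induction. No subtlety beyond this is expected, since the discriminant is only determined up to $Z^{\times}$ in the first place, and $Sw(\{cf\})=Sw(\{f\})$ for any $c\in A^{\times}$ by the same absorption trick.
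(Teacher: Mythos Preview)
Your proposal is correct and follows essentially the same approach as the paper: establish that $g(d(A/Z))=_{Z^\times} d(A/Z)$, then propagate this through the inductive layers $D_n$. The only cosmetic difference is that the paper cites \cite[Lemma~1.8(6)]{CPWZ1} or \cite[Lemma~1.4(4)]{CPWZ2} for the $g$-invariance of the discriminant up to a unit, whereas you re-derive it in-house via Lemma~\ref{xxlem1.2}; your version is also a bit more explicit about using $g^{-1}$ to get the reverse inclusion, which the paper leaves implicit.
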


\begin{proof} By \cite[Lemma 1.8(6)]{CPWZ1} or 
\cite[Lemma 1.4(4)]{CPWZ2}, $d(A/Z)$ is $g$-invariant 
up to a unit. So, if $g\in \Aut(A)$, then $g$ 
maps $Sw(d(A/Z))$ to $Sw(d(A/Z))$
and $D_1(d(A/Z))$ to $D_1(d(A/Z))$. By induction, one sees
that $g$ maps $D_n(d(A/Z))$ to $D_n(d(A/Z))$. So the 
assertion follows.
\end{proof}

We need to find some elements $f\in A$ so that $\partial(f)=0$
for all $\partial\in \LND(A)$. The next lemma was proven in 
\cite[Proposition 1.5]{CPWZ2}.

\begin{lemma}
\label{xxlem7.7}
Let $Z$ be the center of $A$ and $d\geq 0$. Suppose $A^\times= k^{\times}$. 
Assume that $A$ is finitely generated and free over $Z$. Then 
$\partial(d(A/Z))=0$ for all $\partial\in \LND(A[t_1,\cdots,t_d])$.
\end{lemma}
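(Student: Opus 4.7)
The plan is to reduce the statement to the fact that any algebra automorphism preserves the discriminant up to a unit, and then exploit the $1$-parameter family of automorphisms obtained by exponentiating a locally nilpotent derivation.

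First I would set $B:=A[t_1,\dots,t_d]$. Since the $t_i$ are central and $A$ has center $Z$, one checks that $Z(B)=Z[t_1,\dots,t_d]$, and since $A$ is free over $Z$ with basis $\{z_1,\dots,z_r\}$, the same set is a $Z[t_1,\dots,t_d]$-basis of $B$; moreover the trace function defined in \eqref{E1.0.1} is computed by the same matrices in either case, so $d(B/Z(B))=_{Z(B)^\times} d(A/Z)$. Similarly, since $A$ is a domain (needed for the ${\rm LND}$ theory to behave) with $A^\times=k^\times$, iterated polynomial-ring arguments give $B^\times=k^\times$ and $(B[s])^\times=k^\times$, where $s$ is a new central indeterminate.

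Next, given $\partial\in\LND(B)$, local nilpotence makes $\exp(s\partial):=\sum_{n\ge 0}\frac{s^n}{n!}\partial^n$ a well-defined $k[s]$-algebra automorphism $g$ of $B[s]=B\otimes_k k[s]$ (this uses $\ch k=0$ or, more carefully, that $\partial$ being locally nilpotent makes the sum finite on any given element; if characteristic issues arise one replaces $\exp(s\partial)$ by the standard formal flow and makes the same argument term-by-term). Because $B[s]$ remains free over its center $Z(B)[s]=Z[t_1,\dots,t_d,s]$ with the same basis $\{z_i\}$, applying Lemma \ref{xxlem1.2} to $g$ (or directly \cite[Lemma 1.8(6)]{CPWZ1}/\cite[Lemma 1.4(4)]{CPWZ2}) yields
\[
g(d(A/Z))=c\cdot d(A/Z)
\]
for some unit $c\in Z(B[s])^\times=Z[t_1,\dots,t_d,s]^\times$. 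Since $Z\subseteq A$ and $A$ is a domain, $Z[t_1,\dots,t_d,s]$ is a commutative domain whose units equal $Z^\times\subseteq A^\times=k^\times$, so $c\in k^\times$.

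Finally I would evaluate at $s=0$: the automorphism degenerates to the identity, so $c|_{s=0}=1$, and because $c$ is a constant in $k^\times$ (independent of $s$) this forces $c=1$. Hence $\exp(s\partial)(d(A/Z))=d(A/Z)$ in $B[s]$. Expanding the left-hand side as a power series in $s$ and comparing the coefficient of $s$ gives $\partial(d(A/Z))=0$, which is the desired conclusion. The main obstacle is the bookkeeping in the second paragraph: verifying that $d(A/Z)$ really is (up to units) the discriminant of the polynomial extension $B[s]$ over its center, so that the invariance-up-to-unit principle can be invoked, and then pinning down the ambient ring of units tightly enough to conclude $c\in k^\times$ rather than merely $c\in Z[t_1,\dots,t_d,s]^\times$.
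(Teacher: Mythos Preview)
Your argument is correct and essentially complete; the only soft spot is the hedge about characteristic, since forming $\exp(s\partial)$ genuinely needs $\mathbb{Q}\subseteq k$, and the aside about a ``standard formal flow'' does not really fix this. In the paper's setting (domains containing $\mathbb{Z}$, and in the applications $k$ a field of characteristic zero) this is harmless.

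By way of comparison, the paper's own proof is purely a citation: it invokes \cite[Proposition~1.5]{CPWZ2} to get $\partial\bigl(d(A[t_1,\dots,t_d]/Z[t_1,\dots,t_d])\bigr)=0$, and \cite[Lemma~5.4]{CPWZ1} to identify that discriminant with $d(A/Z)$ up to $k^\times$. Your write-up carries out both steps directly rather than outsourcing them: the identification of discriminants under polynomial extension is handled by the observation that the same $Z$-basis of $A$ serves as a $Z[t_1,\dots,t_d,s]$-basis of $B[s]$, and the $\LND$-invariance is proved by exponentiating $\partial$ to a $k[s]$-automorphism and invoking the unit-invariance of the discriminant (Lemma~\ref{xxlem1.2}). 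The payoff of your route is a self-contained argument that makes transparent \emph{why} locally nilpotent derivations kill the discriminant (automorphism invariance plus the one-parameter flow), at the cost of needing $\mathbb{Q}\subseteq k$; the paper's route is shorter on the page but hides the mechanism inside \cite{CPWZ2}. Your careful tracking of the unit group---$Z[t_1,\dots,t_d,s]^\times=Z^\times\subseteq A^\times=k^\times$---is exactly what is needed to promote ``equal up to a central unit'' to ``equal on the nose'' after specializing $s=0$, and is the key step the cited results also rely on.
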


\begin{proof} Let $f$ denote the element 
$d(A[t_1,\cdots,t_d]/Z[t_1,\cdots,t_d])$ in $Z[t_1,\cdots,t_d]$.
By \cite[Proposition 1.5]{CPWZ2}, $\partial (f)=0$.
By \cite[Lemma 5.4]{CPWZ1}, 
$$f
=_{k^\times}d(A/Z).$$ 
The assertion follows.
\end{proof}

Here is the first relationship between the two subalgebras.

\begin{proposition}
\label{xxpro7.8} Retain the hypothesis of Lemma \ref{xxlem7.7}.
Let $d\geq 0$. Then 
$${\mathbb D}(A)\subseteq {\rm ML}(A[t_1,\cdots,t_d])\subseteq A.$$
\end{proposition}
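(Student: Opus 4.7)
The plan is to prove the two inclusions separately. The right inclusion uses an explicit family of locally nilpotent derivations, while the left inclusion is a direct consequence of Lemmas \ref{xxlem7.5} and \ref{xxlem7.7} once we identify the divisor subalgebras in $A$ and in $A[t_1,\ldots,t_d]$.

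For the right-hand inclusion ${\rm ML}(A[t_1,\ldots,t_d])\subseteq A$, I would exhibit locally nilpotent $k$-derivations on $B:=A[t_1,\ldots,t_d]$ whose common kernel is exactly $A$. The natural choice is the family of formal partial derivatives $\partial_i:=\partial/\partial t_i$, for $i=1,\ldots,d$. Because the indeterminates $t_i$ are central in $B$ and commute among themselves, each $\partial_i$ satisfies the Leibniz rule and so is a $k$-derivation of $B$; it is locally nilpotent since iteration lowers the $t_i$-degree. Its kernel is $A[t_1,\ldots,\widehat{t_i},\ldots,t_d]$, and intersecting over $i=1,\ldots,d$ yields $A$. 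Since ${\rm ML}(B)\subseteq \bigcap_{i=1}^{d}\ker(\partial_i)=A$, the inclusion follows (and holds trivially when $d=0$).

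For the left-hand inclusion ${\mathbb D}(A)\subseteq {\rm ML}(B)$, I would proceed as follows. By Lemma \ref{xxlem7.7}, every $\partial\in\LND(B)$ kills $d(A/Z)$, so $d(A/Z)\in {\rm ML}(B)$. Applying Lemma \ref{xxlem7.5} in the ring $B$ to the singleton $F=\{d(A/Z)\}$ then gives $D_B(d(A/Z))\subseteq {\rm ML}(B)$, where $D_B(\cdot)$ denotes the divisor subalgebra computed inside $B$. It thus remains to verify that $D_B(d(A/Z))$ coincides with ${\mathbb D}(A)=D_A(d(A/Z))$.

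This identification is the only point requiring a small technical check, and is the step I would expect to be the main (modest) obstacle. The claim reduces to: for any subset $F\subseteq A$, the sub-word set $Sw_B(F)$ is already contained in $A$ and hence equals $Sw_A(F)$. This is a domain/leading-term argument: $B$ is a domain (it is an iterated central Ore extension of the domain $A$) and is naturally $\mathbb{N}^{d}$-graded by $t$-multiexponent. Using the lex order on $\mathbb{N}^{d}$, the standard leading-term computation shows that the lex-max multiexponent of a product $agb$ in $B$ equals the sum of the lex-max multiexponents of $a$, $g$, and $b$, with coefficient the product of the corresponding leading $A$-coefficients (which is nonzero since $A$ is a domain). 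Therefore, if $agb\in A$ (i.e.\ has trivial multiexponent), each of $a,g,b$ must lie in $A$. Iterating this through the inductive definition $D_{n+1}(F)=\text{subalgebra generated by }Sw(D_n(F))$ yields $D_{n,B}(d(A/Z))=D_{n,A}(d(A/Z))$ for every $n$, so $D_B(d(A/Z))={\mathbb D}(A)$, completing the proof.
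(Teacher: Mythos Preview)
Your proof is correct and follows essentially the same route as the paper: the right inclusion via the partial derivatives $\partial/\partial t_i$ (the paper simply cites \cite{BZ} for this), and the left inclusion via Lemma~\ref{xxlem7.7} followed by Lemma~\ref{xxlem7.5} applied in $B=A[t_1,\ldots,t_d]$.

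The one place where you do more work than necessary is the identification $D_B(d(A/Z))={\mathbb D}(A)$. The paper observes only the trivial inclusion $D_A(f)\subseteq D_B(f)$, which is immediate from the definition since any factorization $f=agb$ with $a,g,b\in A$ is also a factorization in $B$; this already suffices to conclude ${\mathbb D}(A)\subseteq D_B(f)\subseteq {\rm ML}(B)$. Your leading-term argument establishing the reverse inclusion $Sw_B(F)\subseteq A$ is correct and interesting in its own right, but it is not needed for the proposition as stated.
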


\begin{proof} It is clear that ${\rm ML}(A[t_1,\cdots,t_d])\subseteq A$
by \cite{BZ}. Let $f=d(A/Z)$, which is in $A\subseteq A[t_1,\cdots,t_d]$.
By Lemma \ref{xxlem7.7}, $f\in {\rm ML}(A[t_1,\cdots,t_d])$. Let
$D'(f)$ be the discriminant-divisor subalgebra of $f$ in
$A[t_1,\cdots,t_d]$.
By Lemma \ref{xxlem7.5}, $D'(f)\subseteq {\rm ML}(A[t_1,\cdots,t_d])$.
It is clear from the definition that $D(f)\subseteq D'(f)$. 
Therefore the assertion follows.
\end{proof}

In particular, by taking $d=0$, we have 
${\mathbb D}(A)\subseteq {\rm ML}(A)$.

\subsection{Aut-Bounded subalgebra}
\label{xxsec7.3}
In this subsection we assume that $A$ is filtered such that the 
associated graded ring $\gr A$ is a connected graded domain.
Later we further assume that $A$ is connected graded. Since
$\gr A$ is a connected graded domain, we can define 
$\deg f$ to be the degree of $\gr f$, and the degree satisfies
the equation
$$\deg (xy)=\deg x+\deg y$$
for all $x,y \in A$.

\begin{definition}
\label{xxdef7.9}
Retain the above hypotheses. Let $G$ be a subgroup of $\Aut(A)$
and let $V$ be a subset of $A$. 
\begin{enumerate}
\item[(1)]
Let $x$ be an element in $A$. The {\it $G$-bound} of $x$ 
is defined to be
$$\deg_G(x) :=\sup \{\deg (g(x))\mid g\in G\}.$$
\item[(2)] 
Let $g$ be in $\Aut(A)$.
The {\it $V$-bound} of $g$ is defined to be
$$\deg_g(V) :=\sup \{\deg (g(x))\mid x\in V\}.$$
\item[(3)]
The {\it $G$-bounded subalgebra} of $A$, denoted by
$\beta_{G}(A)$, is the set of elements $x$ in
$A$ with finite $G$-bound. It is clear that $\beta_G(A)$ is 
a subalgebra of $A$ [Lemma \ref{xxlem7.10}(1)]. In particular,
the {\it $\Aut$-bounded subalgebra} of $A$, denoted by
$\beta(A)$, is the set of elements $x$ in
$A$ with finite $\Aut(A)$-bound.
\end{enumerate}
\end{definition}

The following lemma is easy, so we omit the proof. 

\begin{lemma}\label{xxlem7.10}
Retain the above notation. Let $G$ be a subgroup
of $\Aut(A)$. 
\begin{enumerate}
\item[(1)]
The set $\beta_{G}(A)$ is a subalgebra of $A$.
\item[(2)]
$g(\beta_{G}(A))=\beta_{G}(A)$ for all $g\in G$.
\end{enumerate}
\end{lemma}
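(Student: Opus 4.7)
The plan is to verify each part directly from the definition of $\beta_G(A)$, using only the fact that $\gr A$ is a connected graded domain (so that $\deg$ is additive on products) and that $G$ is closed under inverses.

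For part (1), I would first observe that $\beta_G(A)$ clearly contains $k$, since $\deg(g(c)) = 0$ for all $c \in k$ and $g \in G$. Given $x, y \in \beta_G(A)$, I would bound $\deg_G$ on the sum and product. Since each $g \in G$ is an algebra map, $g(x+y) = g(x) + g(y)$ and $g(xy) = g(x)g(y)$. The key point is that in a connected graded domain, $\deg$ satisfies $\deg(u+v) \leq \max\{\deg u, \deg v\}$ and $\deg(uv) = \deg u + \deg v$ (this is the reason additivity of degree is noted just before Definition \ref{xxdef7.9}). Taking sup over $g \in G$ on both sides yields
\[
\deg_G(x+y) \le \max\{\deg_G(x),\deg_G(y)\} < \infty, \qquad \deg_G(xy) \le \deg_G(x) + \deg_G(y) < \infty,
\]
and similarly $\deg_G(cx) \leq \deg_G(x)$ for $c \in k$. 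Hence $\beta_G(A)$ is a subalgebra.

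For part (2), the argument reduces to the observation that right translation by a fixed $g \in G$ permutes the elements of $G$. Concretely, for any $x \in \beta_G(A)$ and $g \in G$,
\[
\deg_G(g(x)) = \sup_{h \in G} \deg(h(g(x))) = \sup_{h \in G} \deg((hg)(x)) = \sup_{h' \in G} \deg(h'(x)) = \deg_G(x),
\]
using the bijection $h \mapsto hg$ of $G$ with itself. This shows $g(\beta_G(A)) \subseteq \beta_G(A)$, and applying the same inclusion to $g^{-1}$ yields the reverse inclusion.

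There is really no obstacle here; the only thing to be careful about is to invoke the additivity of $\deg$ on products (which requires $\gr A$ to be a domain, an assumption stated just before Definition \ref{xxdef7.9}) and to use that $G$ is a group rather than merely a monoid for the $g$-invariance statement. Since the author explicitly labels the lemma as easy and leaves the proof out, a short two-paragraph writeup along the lines above would be all that is needed.
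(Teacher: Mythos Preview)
Your proof is correct and is exactly the routine verification one expects; the paper itself omits the proof, noting only that the lemma is easy. Your care in invoking the domain hypothesis on $\gr A$ for additivity of $\deg$ and the group (not just monoid) structure of $G$ for part~(2) is appropriate.
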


Here is the relation between the two subalgebras  ${\mathbb D}(A)$
and $\betaAut(A)$. Let $V$ be a subset of $A$. We say $V$ is of
{\it bounded degree} if there is an $N$ such that $\deg(v)<N$ for all $v\in V$.

\begin{proposition}
\label{xxpro7.11}
Let $A$ be a filtered algebra such that $\gr  A$ is a connected graded
domain. Suppose that $G \subseteq \Aut ( A )$ and $F \subseteq A$.
\begin{enumerate}
\item[(1)] 
If $G(F)$ has bounded degree,
then $D(F) \subseteq \beta_{G}(A)$.
\item[(2)] 
If $f \in A$ is such that $g (f)=_{Z ( A )^{\times}} f$ for all $g \in G$,
then $D(f) \subseteq \beta_{G} ( A )$.
\item[(3)] 
Assume that $A$ is finitely generated and free over 
its center $Z$. Let $f=d(A/Z)$, then ${\mathbb D}(A)=D(f) \subseteq \beta ( A )$.
\end{enumerate} 
\end{proposition}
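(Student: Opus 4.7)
My plan is to prove (1) by an easy induction on the construction of $D_n(F)$, then deduce (2) and (3) as quick corollaries.

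For (1), first note that $F \subseteq \beta_G(A)$ is immediate from the hypothesis that $G(F)$ is of bounded degree. I would induct on $n$ to show $D_n(F) \subseteq \beta_G(A)$. Suppose $h \in Sw(D_{n-1}(F))$, so that $f = a h b$ for some $a, b \in A$ and $0 \neq f \in D_{n-1}(F)$. Since $\gr A$ is a connected graded domain, degree is additive on products of nonzero elements, so for every $g \in G$,
$$\deg(g(h)) \;\leq\; \deg(g(a)) + \deg(g(h)) + \deg(g(b)) \;=\; \deg(g(f)).$$
Taking the supremum over $g \in G$ yields $\deg_G(h) \leq \deg_G(f) < \infty$ by the inductive hypothesis, so $h \in \beta_G(A)$. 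Because $\beta_G(A)$ is a subalgebra by Lemma \ref{xxlem7.10}(1), the subalgebra $D_n(F)$ generated by $Sw(D_{n-1}(F))$ is contained in $\beta_G(A)$. Passing to the union gives $D(F) \subseteq \beta_G(A)$.

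For (2), I would apply (1) with $F = \{f\}$: the task reduces to verifying that $\{g(f) \mid g \in G\}$ has bounded degree. By hypothesis, $g(f) = c_g f$ with $c_g \in Z(A)^\times$. Because $\gr A$ is a connected graded domain, for any unit $c \in A^\times$ the element $\gr(c)$ is a homogeneous unit in $\gr A$, and in a connected graded domain every homogeneous unit lies in degree $0$. Hence $\deg(c_g) = 0$ and $\deg(g(f)) = \deg(f)$ for every $g \in G$, providing the uniform bound $N = \deg(f)$. Part (3) is then immediate by taking $G = \Aut(A)$ and $f = d(A/Z)$: the invariance $g(d(A/Z)) =_{Z^\times} d(A/Z)$ for every $g \in \Aut(A)$, already invoked in the proof of Lemma \ref{xxlem7.6}, is precisely the hypothesis of (2), so $\mathbb{D}(A) = D(f) \subseteq \beta(A)$.

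The only non-formal ingredient is the observation that units of $A$ sit in degree zero of $\gr A$, which is what prevents the central unit $c_g$ in (2) from inflating the degree of $g(f)$; this needs the combination of connectedness and the fact that $\gr A$ is a domain (so $\gr$ is multiplicative on nonzero elements). Everything else is a bookkeeping induction that tracks degrees along the recursive definition of $D(F)$.
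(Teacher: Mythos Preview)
Your proof is correct and follows essentially the same approach as the paper. The paper phrases the induction in (1) by contradiction rather than directly, but the core observation is identical: degree additivity in $\gr A$ forces $\deg(g(h)) \leq \deg(g(f))$ whenever $f = ahb$. Your treatment of (2) is slightly more explicit than the paper's one-line remark that $Z(A)^\times \subseteq A_0$, and for (3) the paper cites Lemma~\ref{xxlem1.2} directly rather than routing through Lemma~\ref{xxlem7.6}, but these are cosmetic differences.
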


\begin{proof}
(1) We have $D_{0}( F ) = F \subseteq \beta_{G} ( A )$ by assumption
and use induction on $n$. 
Suppose that $D_{n-1} ( F ) \subseteq \beta_{G} ( A )$. Assume that 
$D_{n} ( F )$ is not contained in $\beta_{G} ( A )$. Then there exists 
$x \in D_{n} ( A )$ such that $G(x)$ does not have bounded degree.
Since $D_{n} ( A )$ is generated by $Sw(D_{n-1}(A))$ as an algebra, 
there is an $f\in Sw(D_{n-1}(A))$ such that $G(f)$ does not have bounded
degree. By definition of $Sw(D_{n-1}(A))$, there exists a nonzero 
$f' \in D_{n-1} ( A )$ and $a,b \in A$ such that $f' =a f b$.
Since $\gr  A$ is a domain, we have $\deg ( g (f') ) = \deg ( g (a) ) 
+ \deg( g (f) ) + \deg ( g (b) )$ for all $g \in G$. Hence $G(f')$ 
does not have bounded degree, which is a contradiction. Hence 
$D_{n} ( F ) \subseteq \beta_{G} ( A)$ for all $n \geq 1$, 
therefore $D ( F ) \subseteq \beta_{G} ( A )$.

(2) Since $Z ( A )^{\times} \subseteq A_{0}$, we see that $G(f)$ has 
bounded degree, hence part (2) follows from part (1).

(3) The third assertion is a special case of part (2) by 
Lemma \ref{xxlem1.2}.
\end{proof}

Under the hypotheses of Propositions \ref{xxpro7.8} and \ref{xxpro7.11}
(and assume that $A$ is finitely generated and free over its center $Z$),
we have
\begin{center}
\begin{tikzpicture}
\node (a) at (0,0) {$\mathrm{ML}(A)$};
\node (b) at (1,1) {$\mathbb{D}(A)$};
\node (c) at (2,0) {$\beta(A)$};
\node (d) at (1,-1) {$A$};
\node at ($(a)!0.5!(b)$) {\rotsymp{45}};
\node at ($(b)!0.5!(c)$) {\rotsym{315}};
\node at ($(c)!0.5!(d)$) {\rotsymp{45}};
\node at ($(d)!0.5!(a)$) {\rotsym{315}};
\end{tikzpicture}
\end{center}
For the rest of this section, we assume that $A$ is a connected graded
domain and that $k$ contains the field ${\mathbb Q}$. 
An automorphism $g$ of $A$ is called {\it unipotent} if 
\begin{equation}
\label{E7.11.1}\tag{E7.11.1}
g(v) = v + {\text{(higher degree terms)}}
\end{equation}
for all homogeneous elements $v\in A$. Let $\Aut_{uni}(A)$ denote
the subgroup of $\Aut(A)$ consisting of unipotent automorphisms
\cite[After Theorem 3.1]{CPWZ2}. If $g\in \Aut_{uni}(A)$, 
we can define 
\begin{equation}
\label{E7.11.2}\tag{E7.11.2}
\log (g):= - \sum_{i=1}^{\infty} \frac{1}{i} (1-g)^i.
\end{equation}

Let $C$ be the completion of $A$ with respect to the graded maximal 
ideal ${\mathfrak m}:=A_{\geq 1}$. Then $C$ is a local ring 
containing $A$ as a subalgebra. We can define $\deg_l:C\to \mathbb{Z}$ 
by setting $\deg_l(v)$ to be the lowest degree of the nonzero homogeneous
components of $v\in C$. We define {\it a unipotent 
automorphism of $C$} in a similar way to \eqref{E7.11.1} by using $\deg_l$. 
It is clear that if $g\in \Aut_{uni}(A)$, then it induces a unipotent 
automorphism of $C$, which is still denoted by $g$.

\begin{lemma}
\label{xxlem7.12} 
Let $A$ be a connected graded domain. Let $g\in \Aut_{uni}(A)$ and 
$G$ be any subgroup of $\Aut(A)$ containing $g$. Let $B$ denote 
$\beta_G(A)$. Then $\log(g)\mid_{B}$ is a locally nilpotent derivation 
of $B$. Further, $g\mid_{B}$ is the identity if and only if 
$\log(g)\mid_{B}$ is zero.
\end{lemma}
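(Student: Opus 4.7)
My plan is to set the problem up in the $\mathfrak{m}$-adic completion $C$ of $A$ (where $\mathfrak{m} = A_{\geq 1}$), prove the two desired properties at that level, and then restrict to $B = \beta_G(A)$. The starting point is that unipotency forces $(1-g)$ to strictly raise $\deg_l$ on $C$: if $v \in C$ is nonzero with lowest-degree component $v_d$ of degree $d$, then $g(v_d) = v_d + (\text{terms of degree } > d)$, and summing over all homogeneous components of $v$ gives $\deg_l((1-g)(v)) \geq d+1$. Consequently, $\log(g) = -\sum_{i\geq 1}(1/i)(1-g)^i$ converges in the $\mathfrak{m}$-adic topology to a continuous $k$-linear endomorphism of $C$, and the standard formal computation (from $g\circ\mu = \mu\circ(g\otimes g)$ and the fact that $g\otimes 1$ and $1\otimes g$ commute, so $\log(g\otimes g) = \log(g)\otimes 1 + 1\otimes\log(g)$) gives $\log(g)\circ\mu = \mu\circ(\log(g)\otimes 1 + 1\otimes\log(g))$, i.e.\ $\log(g)$ is a $k$-derivation of $C$.

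Next I would show that $(1-g)|_B$ is locally nilpotent. By Lemma~\ref{xxlem7.10}(2) the subalgebra $B$ is $g$-stable. Fix $x \in B$ and set $N = \deg_G(x) < \infty$ and $d_0 = \deg_l(x) \geq 0$. Iterating the estimate above gives $\deg_l((1-g)^i(x)) \geq d_0 + i$. On the other hand, expanding $(1-g)^i(x) = \sum_{j=0}^i (-1)^j\binom{i}{j} g^j(x)$ and using $\deg(g^j(x)) \leq N$ for all $j$ yields $\deg((1-g)^i(x)) \leq N$. For $i > N - d_0$ these two bounds are incompatible, forcing $(1-g)^i(x) = 0$. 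Hence $\log(g)(x)$ reduces to a finite sum of elements of $B$, so $\log(g)|_B$ is a well-defined, $k$-linear, locally nilpotent operator on $B$; as the restriction of a derivation on $C$, it is a derivation of $B$. This proves the first assertion.

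For the equivalence, $g|_B = \mathrm{id}$ trivially yields $(1-g)|_B = 0$ and hence $\log(g)|_B = 0$. Conversely, the formal identity $\exp(\log(1-y)) = 1-y$ in $\mathbb{Q}[[y]]$, evaluated at $y = 1-g$ acting on any $x \in B$ (where all relevant series terminate by local nilpotency and $k\supseteq\mathbb{Q}$), yields $\exp(\log(g))(x) = g(x)$; so $\log(g)|_B = 0$ implies $g|_B = \mathrm{id}_B$. The step requiring the most care is the local-nilpotency estimate, combining the unipotent lower bound on $\deg_l$ with the $G$-bounded upper bound on $\deg$; once that is in hand, the derivation property and the $\exp$-$\log$ inversion are essentially formal.
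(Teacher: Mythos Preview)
Your proof is correct and follows essentially the same route as the paper: pass to the $\mathfrak{m}$-adic completion $C$, use unipotency to make $\log(g)$ converge as a derivation there, and then combine the lower bound $\deg_l((1-g)^i(x)) \geq i + \deg_l(x)$ with the $G$-bounded upper bound $\deg((1-g)^i(x)) \leq \deg_G(x)$ to force $(1-g)|_B$ locally nilpotent, from which the finiteness of $\log(g)(x)$, its membership in $B$, the local nilpotence of $\log(g)|_B$, and the $\exp$--$\log$ equivalence all follow. The only cosmetic difference is that you sketch the Hopf-algebra style reason $\log(g)$ is a derivation, whereas the paper simply invokes a reference for this standard fact.
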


\begin{proof} Let $C$ be the completion of $A$ with respect 
to the graded maximal ideal ${\mathfrak m}:=A_{\geq 1}$. 
Let $g$ also denote the algebra automorphism of $C$ induced by $g$.
Then $g$ is also a unipotent automorphism of $C$.

Since $g$ is unipotent, $\deg_l (1-g)(v)> \deg_l v$ for any 
$0\neq v\in C$. By induction, one has
$\deg (1-g)^n (v)\geq n+\deg v$ for all $n\geq 1$. 
Thus $\log(g)(v)$ converges and therefore is well-defined. 
It follows from a standard argument that $\log(g)$ is a derivation of 
$C$ (this also is a consequence of \cite[Proposition 2.17(b)]{Fr}).

Let $v$ be an element in $B:=\beta_{G}(A)$. Note that $g^n(v)\in B$
for all $n$ by Lemma \ref{xxlem7.10}. Since $v\in 
B$, there is an $N_0$ such that  $\deg g^n(v)<N_0$
for all $n$. If $(1-g)^n(v)\neq 0$, then 
\begin{equation}
\label{E7.12.1}\tag{E7.12.1}
\deg (1-g)^n(v)=\deg (\sum_{i=0}^n {n \choose i} g^i(v))< N_0, 
\quad {\text{for all $n$}}.
\end{equation}
When $n\geq N_0$, the inequalities from the previous paragraph
imply that 
\begin{equation}
\label{E7.12.2}\tag{E7.12.2}
\deg_l (1-g)^n (v)\geq n+\deg v\geq N_0,
\end{equation}
which contradicts \eqref{E7.12.1} unless $(1-g)^n (v)=0$. Therefore
\begin{equation}
\label{E7.12.3}\tag{E7.12.3}
(1-g)^n (v)=0, \quad {\text{for all $n>N_0$}}.
\end{equation}

By \eqref{E7.12.3}, the infinite sum of $\log(g)$ in \eqref{E7.11.2} 
terminates when applied to $v\in B$, and $\log(g)(v)\in A$.
By Lemma \ref{xxlem7.10}, $\log(g)(v)\in B$. Since
$\log(g)$ is a derivation of $C$, it is a derivation when restricted 
to $B$. 

Next we need to show that it is a locally nilpotent derivation 
when restricted to $B$. It suffices to verify that, for any $v\in B$, 
$\log(g)^N(v)=0$ for $N\gg 0$, which follows from \eqref{E7.11.2}
and \eqref{E7.12.3}.

The final assertion follows from the fact that $g$ is the 
exponential function of $\log(g)$ and $\log(g)$ is locally nilpotent.
\end{proof}

Now we are ready to prove the second part of Theorem \ref{xxthm0.5}
without the finite GK-dimension hypothesis.

\begin{theorem}
\label{xxthm7.13}
Let $k$ be a field of characteristic zero and 
$A$ be a connected graded domain over $k$. Assume that $A$ is finitely generated and 
free over its center $Z$ in part {\rm{(2)}}.
\begin{enumerate}
\item[(1)]
If ${\rm ML}(A)=\beta(A)=A$, then $\Aut_{uni}(A)=\{1\}$.
\item[(2)]
If ${\mathbb D}(A)=A$, then $\Aut_{uni}(A)=\{1\}$.
\end{enumerate}
\end{theorem}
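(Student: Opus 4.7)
The plan is to deduce both parts almost directly from Lemma \ref{xxlem7.12}, together with the containment relations ${\mathbb D}(A)\subseteq {\rm ML}(A)$ from Proposition \ref{xxpro7.8} (with $d=0$) and ${\mathbb D}(A)\subseteq \beta(A)$ from Proposition \ref{xxpro7.11}(3).

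For part (1), let $g\in \Aut_{uni}(A)$. Apply Lemma \ref{xxlem7.12} with the subgroup $G=\Aut(A)$, so that the relevant bounded subalgebra is $B=\beta_G(A)=\beta(A)$, which equals $A$ by hypothesis. The lemma then asserts that $\log(g)$ is a well-defined locally nilpotent derivation on all of $A$. But the hypothesis ${\rm ML}(A)=A$ means that the kernel of every element of $\LND(A)$ is all of $A$, i.e.\ $\LND(A)=\{0\}$. Hence $\log(g)=0$, and the final clause of Lemma \ref{xxlem7.12} forces $g=1$. Thus $\Aut_{uni}(A)=\{1\}$.

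For part (2), assume ${\mathbb D}(A)=A$ and that $A$ is finitely generated and free over its center $Z$. By Proposition \ref{xxpro7.8} (taking $d=0$), ${\mathbb D}(A)\subseteq {\rm ML}(A)\subseteq A$, so ${\rm ML}(A)=A$. By Proposition \ref{xxpro7.11}(3), ${\mathbb D}(A)\subseteq \beta(A)\subseteq A$, so $\beta(A)=A$. The hypotheses of part (1) are therefore satisfied, and we conclude $\Aut_{uni}(A)=\{1\}$.

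There is no real obstacle; all the analytic/formal content (convergence of $\log(g)$, the fact that it is a locally nilpotent derivation on the bounded subalgebra, and the inversion of $\exp$ and $\log$) has been packaged into Lemma \ref{xxlem7.12}, and the two inclusions needed for part (2) are already proven. The only point worth double-checking is that when ${\rm ML}(A)=A$, we indeed get $\LND(A)=\{0\}$ rather than just $\ker\partial=A$ for each $\partial$; but by the definition of a derivation, $\ker\partial=A$ is precisely the statement $\partial=0$. Hence the argument goes through with no additional work, and part (2) follows from part (1) by pure diagram chasing among the three subalgebras ${\rm ML}(A)$, ${\mathbb D}(A)$, and $\beta(A)$.
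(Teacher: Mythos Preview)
Your proof is correct and follows essentially the same approach as the paper: for part (1) you invoke Lemma \ref{xxlem7.12} with $G=\Aut(A)$ so that $B=\beta(A)=A$, conclude $\log(g)\in\LND(A)=\{0\}$, and use the final clause of the lemma; for part (2) you reduce to part (1) via the inclusions from Propositions \ref{xxpro7.8} and \ref{xxpro7.11}(3), exactly as the paper does. The only implicit point is that Proposition \ref{xxpro7.8} requires $A^\times=k^\times$, but this is automatic for a connected graded domain.
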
 

\begin{proof} (1) By hypothesis, $B:=\beta(A)$ equals $A$. 
Let $g\in \Aut_{uni}(A)$. Then $\log(g)\mid_B$ is a locally 
nilpotent derivation of $B$ by Lemma \ref{xxlem7.12}. Hence
$\log(g)\in \LND(A)$. Since ${\rm ML}(A)=A$, $\LND(A)=\{0\}$.
So $\log(g)=0$. By Lemma \ref{xxlem7.12}, $g$ is the identity.

(2) Combining the hypothesis ${\mathbb D}(A)=A$ with 
Propositions \ref{xxpro7.8} and \ref{xxpro7.11}, we have
${\rm ML}(A)=\beta(A)=A$. The assertion follows from
part (1).
\end{proof}

\section{Applications}
\label{xxsec8}

In this section we assume that $k$ is a field of characteristic zero.

\subsection{Zariski cancellation problem}
\label{xxsec8.1}
The Zariski cancellation problem for noncommutative algebras was studied 
in \cite{BZ}. We recall some definitions and results.

\begin{definition}\cite[Definition 1.1]{BZ}
\label{xxdef8.1}
Let $A$ be an algebra.
\begin{enumerate}
\item
We call $A$ {\it cancellative} if $A[t]\cong B[t]$ for
some algebra $B$ implies that $A\cong B$.
\item
We call $A$ {\it strongly cancellative} if, for any $d\geq 1$, 
$A[t_1,\ldots,t_d]\cong B[t_1,\ldots,t_d]$ for some algebra 
$B$ implies that $A\cong B$.
\end{enumerate}
\end{definition}

The original Zariski cancellation problem, denoted by {\bf ZCP}, asks 
if the polynomial ring $k[t_1,\cdots, t_n]$, where $k$ is a field, is 
cancellative. A recent result of Gupta \cite{Gu1,Gu2} settled the 
question {\bf ZCP} negatively in positive characteristic for $n\geq 3$. 
The {\bf ZCP} in characteristic zero remains open for $n\geq 3$. 
Some history and partial results about the {\bf ZCP} can be found 
in \cite{BZ}. In \cite{BZ}, the authors used discriminants and locally 
nilpotent derivations to study the Zariski cancellation problem for 
noncommutative rings.

One of the main results in \cite{BZ} is the following.

\begin{theorem}\cite[Theorems 3.3 and 0.4]{BZ}
\label{xxthm8.2}
Let $A$ be a finitely generated domain of finite Gelfand-Kirillov 
dimension. If $A$ is strongly $\LND$-rigid {\rm{(}}respectively, 
$\LND$-rigid{\rm{)}}, then $A$ is strongly cancellative 
{\rm{(}}respectively, cancellative{\rm{)}}. 
\end{theorem}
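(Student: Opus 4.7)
The plan is to use the Makar-Limanov invariant as an isomorphism invariant and reduce the cancellation statement to tracking a single subalgebra through the isomorphism. I describe the strong version first and then indicate what changes for the unparenthesized statement.

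Suppose $\phi\colon A[t_1,\ldots,t_d]\xrightarrow{\sim} B[s_1,\ldots,s_d]$ is a $k$-algebra isomorphism. Since the formation of $\mathrm{ML}$ is intrinsic to the algebra structure, $\phi$ restricts to an isomorphism $\mathrm{ML}(A[t_1,\ldots,t_d])\xrightarrow{\sim}\mathrm{ML}(B[s_1,\ldots,s_d])$. By the strong $\LND$-rigidity hypothesis the left-hand side equals $A$. On the right-hand side, each $\partial/\partial s_i$ is a locally nilpotent derivation of $B[s_1,\ldots,s_d]$, and their common kernel is $B$, giving $\mathrm{ML}(B[s_1,\ldots,s_d])\subseteq B$. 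Combining, $\phi(A)\subseteq B$, so $\phi|_A\colon A\hookrightarrow B$ is a $k$-algebra embedding.

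The main obstacle is the reverse containment $B\subseteq\phi(A)$. The isomorphism $\phi$ exhibits $B[s_1,\ldots,s_d]=\phi(A)[\phi(t_1),\ldots,\phi(t_d)]$ as a polynomial extension of $\phi(A)$ with central indeterminates $\phi(t_j)$, so $B[s_1,\ldots,s_d]$ carries two such polynomial-extension structures. To close the argument I would compare Gelfand-Kirillov dimensions: additivity over polynomial extensions gives $\GKdim(A)=\GKdim(B)$, while the joint kernel of the derivations $\partial/\partial\phi(t_j)$ on $B[s_1,\ldots,s_d]$ equals $\phi(A)$. Using the finite-generation hypothesis on $B$ to express each of its generators as a polynomial in the $\phi(t_j)$ over $\phi(A)$, and exploiting that these generators are simultaneously annihilated by all $\partial/\partial s_i$, one concludes that every element of $B$ must lie in the joint kernel of the $\partial/\partial\phi(t_j)$, and hence in $\phi(A)$. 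This bookkeeping, which uses finite generation together with finite GK-dimension in an essential way, is the technical heart of \cite[Theorem 3.3]{BZ}.

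For the non-strong version ($\mathrm{ML}(A)=A$ implies $A$ cancellative), the bootstrap step $\mathrm{ML}(A)=A \Rightarrow \mathrm{ML}(A[t])=A$ is not automatic and is the harder ingredient. It is established by analyzing how an arbitrary $\LND$ of $A[t]$ can restrict to $A$; in the presence of finite generation and finite GK-dimension this restriction is controlled enough to force every $\LND$ of $A[t]$ to vanish on $A$. Once this is in place, the argument of the previous two paragraphs specializes to $d=1$ and yields $A\cong B$.
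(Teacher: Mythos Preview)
The paper does not prove this statement; Theorem~\ref{xxthm8.2} is quoted from \cite[Theorems~3.3 and~0.4]{BZ} and invoked as a black box in the proof of Theorem~\ref{xxthm8.3}. There is therefore no in-paper argument to compare your sketch against.

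On the merits of your sketch: the opening step is correct and standard. Transporting $\mathrm{ML}$ across $\phi$ and using the partial derivatives $\partial/\partial s_i\in\LND(B[s_1,\ldots,s_d])$ gives $\phi(A)=\mathrm{ML}(B[s_1,\ldots,s_d])\subseteq B$. The substance lies in the reverse inclusion $B\subseteq\phi(A)$, and here your account becomes circular. You assert that elements of $B$ land in the joint kernel of the derivations $\partial/\partial\phi(t_j)$ ``using the finite-generation hypothesis on $B$ \ldots\ together with finite GK-dimension,'' but you supply no mechanism connecting annihilation by $\partial/\partial s_i$ to annihilation by $\partial/\partial\phi(t_j)$, and then you defer to \cite[Theorem~3.3]{BZ}---which is precisely the result under discussion. (Note also that the hypotheses are on $A$, not on $B$; one must first transfer them across $\phi$.) A genuine argument has to supply this step rather than cite it.

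For the non-strong version, your proposed bootstrap $\mathrm{ML}(A)=A\Rightarrow\mathrm{ML}(A[t])=A$ is exactly strong $\LND$-rigidity for $d=1$; if that implication held in general, the two hypotheses in the theorem would collapse into one and there would be no need to state them separately. The $\LND$-rigid case in \cite{BZ} is handled by a distinct argument, not by reducing to the strong case.
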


Now we have an immediate consequence, which is the first
part of Theorem \ref{xxthm0.5}. Combining with Theorem 
\ref{xxthm7.13}, we finished the proof of Theorem \ref{xxthm0.5}.

\begin{theorem}
\label{xxthm8.3}
Let $A$ be a finitely generated domain of finite GK-dimension. 
Let $Z$ be the center of $A$ and suppose $A^\times= k^{\times}$. 
Assume that $A$ is finitely generated and free over $Z$. If $A=
{\mathbb D}(A)$, then $A$ is strongly cancellative.
\end{theorem}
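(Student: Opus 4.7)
The plan is to combine Theorem \ref{xxthm8.2} with Proposition \ref{xxpro7.8}; the result will be essentially immediate once we line up the hypotheses. So my approach is first to verify that the hypotheses of Proposition \ref{xxpro7.8} (equivalently, of Lemma \ref{xxlem7.7}) are met: the theorem assumes $A^\times = k^\times$ and that $A$ is finitely generated and free over its center $Z$, which is exactly what Lemma \ref{xxlem7.7} needs.

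Next, for each $d \geq 0$, Proposition \ref{xxpro7.8} yields the inclusion chain
\[
{\mathbb D}(A) \;\subseteq\; \mathrm{ML}(A[t_1,\dots,t_d]) \;\subseteq\; A.
\]
The hypothesis ${\mathbb D}(A) = A$ therefore forces $\mathrm{ML}(A[t_1,\dots,t_d]) = A$ for every $d \geq 0$. By Definition \ref{xxdef7.1}(3), this is exactly the statement that $A$ is strongly $\mathrm{LND}$-rigid.

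Finally, since $A$ is assumed to be a finitely generated domain of finite Gelfand--Kirillov dimension, Theorem \ref{xxthm8.2} applies and gives that $A$ is strongly cancellative. There is no real obstacle here: the proof is a one-line synthesis of the earlier Proposition \ref{xxpro7.8} (which is the content-heavy step, packaging the fact that the discriminant is annihilated by every locally nilpotent derivation even after polynomial extension) and Theorem \ref{xxthm8.2} (the main theorem of \cite{BZ}). The only thing to double-check while writing is that Lemma \ref{xxlem7.7} was stated with the correct hypotheses to allow the polynomial-ring extension $A[t_1,\dots,t_d]$ in Proposition \ref{xxpro7.8}, which it was.
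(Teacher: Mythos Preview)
Your proof is correct and follows essentially the same approach as the paper: apply Proposition \ref{xxpro7.8} to the hypothesis ${\mathbb D}(A)=A$ to obtain ${\rm ML}(A[t_1,\dots,t_d])=A$ for all $d\geq 0$, i.e.\ strong $\LND$-rigidity, and then invoke Theorem \ref{xxthm8.2}.
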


\begin{proof} Combining the hypothesis $A={\mathbb D}(A)$
with Proposition \ref{xxpro7.8}, we have
$$A={\mathbb D}(A)\subseteq {\rm ML}(A[t_1,\cdots,t_d])
\subseteq A.$$
So ${\rm ML}(A[t_1,\cdots,t_d]) = A$, or $A$ is 
strongly $\LND$-rigid. The assertion follows from 
Theorem \ref{xxthm8.2}. 
\end{proof}

Next we give two examples. 

\begin{example}
\label{xxex8.4} Let $A$ be generated by $x_1,\cdots,x_4$ subject to the
relations
\begin{align}
\notag
x_1x_2+x_2x_1=0,\;\; \quad & x_2x_3+x_3x_2=0,\\
\label{E8.4.1}\tag{E8.4.1}
x_1x_3+x_3x_1=0,\;\; \quad & x_3x_4+x_4x_3=0,\\
\notag
x_1x_4+x_4x_1=x_3^2, \quad & x_2x_4+x_4x_2=0.
\end{align}
This is an iterated Ore extension, so it is Artin-Schelter regular of global
dimension 4. 
This is a special case of the algebra in \cite[Definition 3.1]{VVRW}.
Set $x_i^2=y_i$ for $i=1,\cdots,4$. Then $Z(A)=k[y_1,y_2,y_3,y_4]$.
The $M_1$-matrix of \eqref{E3.0.1} is
$$(a_{ij})_{4\times 4}=\begin{pmatrix}
2y_1 & 0 & 0 & y_3\\
0 & 2y_2 & 0 & 0 \\
0 & 0 & 2y_3 & 0\\
y_3 & 0 & 0 & 2 y_4
\end{pmatrix}.
$$
The determinant $\det (a_{ij})$ is $f_0:=4y_2y_3(4y_1y_4-y_3^2)$ by 
linear algebra. By Theorem \ref{xxthm3.7}, the discriminant 
$f:=d(A/Z)$ is $f_0^{2^3}$. It is clear that $y_2, y_3\in Sw(f)$
and $y_1,y_4\in Sw(D_1(f))$. Thus $x_i\in Sw(D_2(f))$ for all $i$.
Consequently, $A={\mathbb D}(A)$. By Theorem \ref{xxthm8.3}, $A$ is 
strongly cancellative.
\end{example}

The next example is somewhat generic.

\begin{example}
\label{xxex8.5}
Let $T$ be a commutative domain, and $A=C(V,q)$ be the Clifford 
algebra associated to a quadratic form $q:V \rightarrow T$ 
where $V$ is a free $T$-module of rank $n$. Suppose that $n$ is
even. Then the center of $A$ is $T$ 
\cite[Chapter 5, Theorem 2.5(a)]{La}. We assume that 
$A$ is a domain with $A^{\times} =k^{\times}$. Let $t_{1},\ldots ,t_{w}$ 
be a set of generators of $T$, and suppose that $q(V)\subseteq 
( t_{1} \cdots t_{w} ) T$ or $\det(q) \in ( t_{1} \cdots t_{w} ) T$. 
Then by Theorem \ref{xxthm3.7} we have 
$f \assign d (A/T) \in ( t_{1} \cdots t_{w})^{2^{n-1}}$.
So $t_s\in Sw(f)$ for all $s$. This shows that $T\subseteq {\mathbb D}(A)$ 
and then $A={\mathbb D}(A)$ (as $x_i^2\in T$). By Theorem \ref{xxthm8.3},
$A$ is strongly cancellative.
\end{example}

\begin{remark}
\label{xxrem8.6} Let $A$ be the algebra in Example \ref{xxex6.3}.
Using the formula for $d(A/Z)$ given in Lemma \ref{xxlem6.4}, 
it is easy to see that $A={\mathbb D}(A)$. 
So $A$ is cancellative by Theorem \ref{xxthm8.3}.
\end{remark}

\subsection{Automorphism problem}
\label{xxsec8.2}
By \cite{CPWZ1, CPWZ2}, the discriminant controls the automorphism group 
of some noncommutative algebras. In this section we compute some
automorphism groups by using the discriminants computed in previous sections.
We first recall some definitions and results.

We modify the definitions in \cite{CPWZ1, CPWZ2} slightly.
Let $A$ be an ${\mathbb N}$-filtered algebra such that 
$\gr A$ is a connected graded domain. Let $X:=\{x_1,\cdots, x_n\}$
be a set of elements in $A$ such that it generates $A$ and $\gr X$ 
generates $\gr A$. We do not require $\deg x_i=1$ for all $i$.

\begin{definition}
\label{xxdef8.7}
Let $f$ be an element in $A$ and $X'=\{x_1,\cdots,x_m\}$ be a 
subset of $X$. We say $f$ is \emph{dominating} over $X'$ if for any 
subset $\{y_1,\cdots, y_n\}\subseteq A$ that is linearly
independent in the quotient $k$-space $A/k$, there is a lift
of $f$, say $F(X_1,\cdots,X_n)$, in the free algebra $k\langle
X_1,\cdots,X_n\rangle$, such that $\deg F(y_1,\cdots, y_n)>\deg f$
whenever $\deg y_i> \deg x_i$ for some $x_i\in X'$.
\end{definition}

The following lemma is easy.

\begin{lemma}
\label{xxlem8.8}
Retain the above notation.
Suppose $f: =d(A/Z)$ is dominating over $X'$. Then for every
automorphism $g\in \Aut(A)$, $\deg g(x_i)\leq \deg x_i$ for all
$x_i\in X'$.
\end{lemma}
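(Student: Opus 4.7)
The plan is to argue by contradiction. Suppose there exists $g \in \Aut(A)$ and some $x_j \in X'$ with $\deg g(x_j) > \deg x_j$. Set $y_i := g(x_i)$ for $i=1,\ldots,n$. Because $g$ is an automorphism and the generating set $X$ is linearly independent in $A/k$ (which is implicit in the filtered setup: if some $x_i$ were linearly dependent mod $k$ on the others, then $\gr x_i$ would be linearly dependent on the other $\gr x_j$ in $\gr A$, contradicting minimality of $\gr X$ as generators), the tuple $\{y_1,\ldots,y_n\}$ is also linearly independent in $A/k$. So the hypothesis that $f = d(A/Z)$ is dominating over $X'$ applies: there is a lift $F(X_1,\ldots,X_n) \in k\langle X_1,\ldots,X_n\rangle$ of $f$ such that $\deg F(y_1,\ldots,y_n) > \deg f$.

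Since $g$ is an algebra homomorphism, one has
\[
F(y_1,\ldots,y_n) \;=\; F(g(x_1),\ldots,g(x_n)) \;=\; g(F(x_1,\ldots,x_n)) \;=\; g(f),
\]
so $\deg g(f) > \deg f$. On the other hand, the discriminant is invariant under automorphisms up to a unit of $Z$, i.e.\ $g(f) =_{Z^\times} f$ (this is the same invariance already used in the proof of Lemma \ref{xxlem7.6}, coming from \cite[Lemma 1.8(6)]{CPWZ1}). Write $g(f) = u f$ with $u \in Z^\times$. Since $\gr A$ is a connected graded domain, any unit $u \in A$ must have filtration degree zero: the identity $u \cdot u^{-1} = 1$ passes to the associated graded where $\gr u \cdot \gr(u^{-1}) = 1$ in the domain, forcing $\deg u = 0$. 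Therefore $\deg g(f) = \deg f$, contradicting the strict inequality above.

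The argument is essentially a two-line unpacking of the definition, and I do not expect a genuine technical obstacle. The one point that merits care is the linear independence of $\{y_i\}$ modulo $k$ needed to apply the dominating hypothesis, which reduces to the analogous property of $\{x_i\}$; this is implicit in the setup where $X$ plays the role of a minimal generating set whose image in $\gr A$ generates the connected graded domain $\gr A$. The other ingredient worth double-checking is that filtered units live in degree zero, which is why the connectedness of $\gr A$ and its being a domain are invoked up front.
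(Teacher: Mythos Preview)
Your proof is correct and follows essentially the same approach as the paper: set $y_i = g(x_i)$, use linear independence in $A/k$ to invoke the dominating hypothesis, obtain $\deg g(f) > \deg f$, and derive a contradiction from the discriminant's invariance $g(f) =_{Z^\times} f$. You are slightly more explicit than the paper in justifying why units of $Z$ have degree zero (via $\gr A$ being a connected graded domain), but otherwise the arguments are identical.
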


\begin{proof}
Let $y_i=g(x_i)$. Then $\{y_1,\cdots, y_n\}$ is linearly independent in 
$A/k$ (as $\{x_1,\cdots,x_n\}$ is linearly independent on $A/k$).
If $\deg y_i>\deg x_i$ for some $i$, by the dominating property,
there is a lift of $f$ in the free algebra, say $F(X_1,\cdots,X_n)$,
such that $\deg F(y_1,\cdots,y_n)>\deg f$. Since $g$ is an algebra
automorphism, 
$$F(y_1,\cdots, y_n)=F(g(x_1),\cdots,g(x_n))=g(F(x_1,\cdots,x_n))
=g(f).$$
By \cite[Lemma 1.8(6)]{CPWZ1}, $g(f)=f$ (up to a unit in $Z$). Hence
$$\deg F(y_1,\cdots,y_n)=\deg g(f)=\deg f,$$ 
yielding a contradiction. Therefore
$\deg g(x_i)=\deg y_i\leq \deg x_i$ for all $i$.
\end{proof}

We will study the automorphism group of a class of Clifford algebras,
see Example \ref{xxex8.5}. 

\begin{example}
\label{xxex8.9}
Let $A$ be the Clifford algebra over a commutative $k$-domain
$T$ as in Example \ref{xxex8.5} and assume that $n$ is even. 
Let $\{z_1,\cdots,z_n\}$ denote a set of generators for $A$.
We will use $\{x_1,\cdots,x_n\}$ for the generators of the generic 
Clifford algebra $A_g$ defined in Section \ref{xxsec3}. Then
there is an algebra homomorphism from $A_g\to A$ sending $x_i$
to $z_i$ for all $i$. Since $n$ is even, $T$ is the center
of $A$. Assume that $A$ is a filtered algebra 
such that $\gr A$ is a connected graded domain, so we can define 
the degree of any non-zero element in $A$. Further assume that 
$\deg t_i=2$ (not $1$) for all $i=1,\cdots,w$ and $\deg z_i>2$ 
for all $i=1,2,\cdots,n$. In particular, there is no
element of degree 1. Some explicit examples are given later
in this example.

Recall that we assumed $q(V)\subseteq(t_1\cdots t_w)T$. Let 
$2b_{ij} = z_jz_i+z_iz_j$, then we can write $b_{ij} = 
(t_1\cdots t_w)^N b'_{ij}$ for some $N>0$. By Theorem \ref{xxthm3.7}, 
the discriminant is $f:=d(A/T)=[(\prod_{s=1}^w t_s)^N d']^{2^{n-1}}$ 
where $d'=\det(2b'_{ij})_{n\times n}$. We need another
extra hypothesis, which is that \begin{equation}
\label{E8.9.1}
\tag{E8.9.1}
\deg d'<N.
\end{equation} 
Let $X'=\{t_i\}_{i=1}^w$ and $X=\{z_i\}_{i=1}^n \bigcup X'$. Then $f$ is 
a noncommutative polynomial over $X'$. We first claim that $f$ is 
dominating over $X'$. Let $\{y_i\}_{i=1}^w$ be a set of elements in 
$A\setminus k$. If $\deg y_i>2$ for some $i$, then 
$\deg [(\prod_{s=1}^w y_s)^N d'(y_1,\cdots,y_w)]^{2^{n-1}}$ is
strictly larger than the degree of $f$, as we assume that 
$\deg d'<N.$ This shows the claim. 

Now let $g$ be any algebra automorphism of $A$ and let $y_i$ be
$g(t_i)$ for all $i$. Then, by Lemma \ref{xxlem8.8}, $\deg y_i=2$. 
It follows from the relations $z_i z_i=b_{ii}$ that $\deg z_i>3$. Hence 
$(\gr A)_2$ is generated by the $t_i$'s.
This implies that $y_i$ is in the span of $X'$ and $k$. In some 
sense, every automorphism of $A$ is affine (with respect to $X'$). 
It is a big step in understanding the automorphism group of $A$.

Below we study the automorphism group of a family of subalgebras 
of the generic Clifford algebra $A_g$ of rank $n$ that is defined 
in Section \ref{xxsec3}. As before we assume $n$ is even. 
We have two different sets of variables $t$, one for $A_g$ and the 
other for general $A$. It would be convenient to unify these 
in the following discussion. So we identify $\{t_{(i,j)}\mid 1\leq i\leq j\leq  n\}$
with $\{t_i\}_{i=1}^w$ via a bijection $\phi$. Here $w=\frac{1}{2}
n(n+1)$ as in the definition of $A_g$ [Section \ref{xxsec3}]. 

Let $r$ be any positive integer and let $B_{g,r}$ be the graded 
subalgebra of $A_g$ generated by $\{t_{(i,j)}\}$ for all 
$1\leq i\leq j\leq n$ (or $\{t_i\}_{i=1}^w$) and 
$z_i:=x_i (\prod_{k=1}^w t_k)^{r}$ for all $i=1,2,\cdots, n$. 
Since $B_{g,r}$ is a graded subalgebra of $A_g$, it is a connected 
graded domain. This is also a Clifford algebra over $T_g:=k[t_{(i,j)}]$ 
generated by $\{z_i\}_{i=1}^n$ subject to the relations
$$z_jz_i+z_iz_j= 2 (\prod_{k=1}^w t_k)^{2r} t_{(i,j)} =: 2 b_{ij}$$
from which the bilinear form $b$ and associated quadratic form $q$
can easily be recovered. In particular, $q(V)\subseteq 
(\prod_{k=1}^w t_k)^{2r} T_{g}$ where $V=\oplus_{i=1}^n T_{g} z_i$. 
By the definition of $A_g$, $\deg t_i=2$. 
Then $\deg z_i=1+4rw>3$. Now we assume that $N:=2r$ is bigger than 
$2r$ that is the degree of $d':=\det (t_{(i,j)})$. So we have 
$$n<r, \quad {\text{or equivalently}} \quad \deg d'<N$$ 
as required by \eqref{E8.9.1}. See also Remark \ref{xxrem8.10}.

Let $g$ be an algebra automorphism of $B_{g,d}$. 
By the above discussion, $g(t_i)$, for each $i$, is a linear 
combination of $\{t_j\}_{j=1}^w$ and $1$. Using the relations 
$z_i^2=b_{ii}$, we see that $\deg g(z_i)
=\deg (z_i)$ for all $i$. Thus $g$ must be a filtered automorphism
of $B_{g,d}$. 

Since $g$ preserves the discriminant $f$ and $f$ is homogeneous in 
$t_i$, $\deg g(t_i)=2$. Further, by using the expression of $f$ and 
the fact that $T_g$ is a UFD, 
$g(t_i)$ can not be a linear combination of $t_j$'s of more than
one term. Thus $g(t_i)=c_i t_j$ for some $j$ and some $c_i\in 
k^{\times}$. This implies that there is a permutation
$\sigma\in S_w$ and a collection of units $\{c_i\}_{i=1}^w$
such that $g(t_i)=c_i t_{\sigma(i)}$ for all $i$. Since $g$ is filtered
(by the last paragraph),
$g(z_i)=\sum_{k=1}^n d_{ik} z_k +e_i$ where $d_{ik}, e_i\in k$. 
Applying $g$ to the relation
$$z_i^2= b_{ii}=(\prod_{i=1}^w t_i)^N t_{\phi(i,i)}, \quad {\text{where}} \quad 
N:=2r,$$
we obtain that
$$(\sum_{k} d_{ik} z_k)^2+2 e_i (\sum_{k} d_{ik} z_k)+ e_i^2=
(\prod_{i=1}^w c_i t_i)^N g(t_{\phi(i,i)}).$$
Since $(\sum_{k} d_{ik} z_k)^2\in T$, we have 
$e_i (\sum_{k} d_{ik} z_k)=0$. Consequently, $e_i=0$ and 
$g(z_i)=\sum_{k=1}^n d_{ik} z_k$.
Applying $g$ to the relations
$$z_i z_j+z_jz_i=2b_{ij}=2(\prod_{i=1}^w t_i)^N t_{\phi(i,j)},$$
and expanding the left-hand side, we obtain that 
$$\sum_{k,l} d_{ik}d_{jl} (z_kz_l +z_lz_k)=
2(\prod_{i=1}^w c_i t_i)^N g(t_{\phi(i,j)}).$$
Hence $d_{ik}d_{jl}$ is nonzero for only one pair $(k,l)$.
Thus there is a set of units $\{d_i\}_{i=1}^n$ and a 
permutation $\psi\in S_{n}$ such that $g(z_i)=d_i z_{\psi(i)}$
for all $i=1,\cdots,n$. Then the above equation implies
that
$$
d_i d_j (\prod_{i=1}^w t_i)^N 
t_{\phi(\psi(i),\psi(j))}=(\prod_{i=1}^w c_i)^N (\prod_{i=1}^w t_i)^N
c_{\phi(i,j)} t_{\sigma (\phi(i,j))}$$
for all $i,j$. Therefore
\begin{equation}
\label{E8.9.2}\tag{E8.9.2}
\phi(\psi(i),\psi(j))=\sigma (\phi(i,j))
\end{equation}
and
\begin{equation}
\label{E8.9.3}\tag{E8.9.3}
d_i d_j =(\prod_{i=1}^w c_i)^N c_{\phi(i,j)}
\end{equation}
for all $i,j$.

By \eqref{E8.9.2}, $\sigma$ is completely determined by 
$\psi\in S_n$. Let $\bar{d}_i=d_i (\prod_{i=1}^w c_i)^{-r}$.
Then \eqref{E8.9.3} says that $\bar{d}_i \bar{d}_j=c_{\phi(i,j)}$.
So $\prod_{i=1}^w c_i=\prod_{1\leq i\leq j\leq n} \bar{d}_i \bar{d}_j$.
This means that $c_{\phi(i,j)}$s and $d_i$s are completely 
determined by $\bar{d}_i$s. In conclusion,
$$\Aut(B_{g,r})\cong\{\psi\in S_n\}\ltimes \{\bar{d}_i\in k^\times\mid i=1,\cdots,n\}
\cong S_n\ltimes (k^\times)^{n}.$$
In particular,  every algebra automorphism of $B_{g,r}$ is a graded algebra
automorphism.
\end{example}

\begin{remark}
\label{xxrem8.10}
As a consequence of the computation in Example \ref{xxex8.9},
$\Aut(B_{g,r})$ is independent of the parameter $r$ when
$r>n$. In fact, this assertion holds for all $r>0$, but its proof 
requires a different and longer analysis, so it is omitted. 
On the other hand, $\Aut(B_{g,0})=\Aut(A_g)$ is very different, 
see Remark \ref{xxrem3.9}(3). 
\end{remark}

We will work out one more automorphism group below.

\begin{example}
\label{xxex8.11} We continue to study Example \ref{xxex8.4} and
prove that every algebra automorphism of $A$ in Example \ref{xxex8.4} 
is graded. Some of unimportant details are omitted due to the 
length.

Claim 1: ${\mathfrak m}:=A_{\geq 1}$ is the only ideal of 
codimension 1 satisfying $\dim {\mathfrak m}/{\mathfrak m}^2=4$.
Suppose $I=(x_1-a_1,x_2-a_2,x_3-a_3,x_4-a_4)$ is an ideal of $A$ of
codimension 1 such that $\dim_k I/I^2=4$. 
Then the map $\pi: x_i\to a_i$ for all $i$
extends to an algebra homomorphism $A\to k$. Applying $\pi$ to the 
relations of $A$ in \eqref{E8.4.1}, we obtain that 
$$a_1 a_2=0, a_1 a_3=0, 2a_1a_4=a_3^2, a_2a_3=0, a_3a_4=0, a_2a_4=0.$$ 
Therefore $(a_i)$ is either $(a_1,0,0,0)$, or $(0,a_2,0,0)$, or
$(0,0,0,a_4)$. By symmetry, we consider the first case and the details
of the other cases are omitted. Let $z_i=x_i-a_i$ for all $i$. Then 
the first relation of \eqref{E8.4.1} becomes
$$z_1z_2+z_2z_1=(x_1-a_1)x_2+x_2(x_1-a_1)=-2a_1 x_2=-2a_1 z_2.$$
So $2a_1 z_2\in I^2$. Since $\dim I/I^2=4$, $a_1=0$. Thus we proved 
claim 1.

One of the consequences of claim 1 is that any algebra automorphism of 
$A$ preserves ${\mathfrak m}$. So we have a short exact sequence 
$$1\to \Aut_{uni}(A)\to \Aut(A)\to \Aut_{gr}(A)\to 1$$
where $\Aut_{gr}(A)$ is the group of graded algebra automorphisms 
of $A$ and $\Aut_{uni}(A)$ is the group of unipotent algebra 
automorphisms of $A$.

Claim 2: If $f$ is a nonzero normal element in degree 1, then 
$B:=A/(f)$ is Artin-Schelter regular domain of global dimension three.
By \cite[Lemma 1.1]{RZ}, $B$ has global dimension 3. Since $A$ satisfies
the $\chi$-condition \cite{AZ}, so is $B$. As 
a consequence, $B$ is AS regular of global dimension 3
\cite{AS}. It is well-known
that every Artin-Schelter regular algebra of global dimension three is
a domain (following by the Artin-Schelter-Tate-Van den Bergh's
classification \cite{AS,ATV1,ATV2}). 

Claim 3: If $f\in A_1$ is a normal element, then $f\in kx_2$ or 
$f\in kx_3$. First of all, both $x_2$ and $x_3$ are normal elements
by the relations \eqref{E8.4.1}. Note that $x_i g=\eta_{-1}(g) x_i$
for $i=2,3$, where $\eta_{-1}$ is the algebra automorphism of $A$ 
sending $x_i$ to $-x_i$ for all $i$. 

Suppose that $f$ is nonzero normal and $f\notin kx_3 \cup kx_4$. 
Then the image $\bar{f}$ of $f$ is normal in $A/(x_3)$. Since 
$A/(x_3)$ is a skew polynomial ring, by \cite[Lemma 3.5(d)]{KKZ}, 
$\bar{f}$ is a a scalar multiple of $x_i$ for some $i=1,2$, or $4$. 
This implies that $f$ is either $a x_1+b x_3$, or $ax_2+bx_3$ or 
$ax_4+bx_3$ for some $a,b\in k$. If $b=0$, then $f=x_1$ or $x_4$.
The relation $x_1x_4+x_4x_1=x_3^2$ implies that $A/(f)$ is not a
domain (as $x_3^2=0$ in $A/(f)$). This contradicts claim 2.
So the only possible case is $f=x_2$ (again yielding a contradiction). 
Now assume that $b\neq 0$ (and $a\neq 0$ because $f\notin kx_3 \cup 
kx_4$). We consider the first case and the details of the other
cases are similar and omitted. Since $f=ax_1+bx_3$, then 
relation $x_1x_3+x_3x_1=0$ implies that $x_1^2=0$ in $A/(f)$,
which contradicts with Claim 2. In all these cases, we obtain 
a contradiction, and therefore $f\in kx_2$ or $f\in kx_3$.

Since $A/(x_2)$ is not isomorphic to $A/(x_3)$, there is no
algebra automorphism sending $x_2$ to $x_3$. As a consequence, 
any graded automorphism $\psi$ of $A$ maps $x_2\to c_2 x_2$ and 
$x_3\to c_3 x_3$. Let $g$ be any graded algebra automorphism 
of $A$. Let $\bar{g}$ be the induced algebra automorphism 
of $A/(x_3)$. By \cite[Lemma 3.5(e)]{KKZ}, $\bar{g}$
sends $x_1\to c_1 x_1$ and $x_4\to c_4x_4$ or 
$x_1\to c_1 x_4$ and $x_4\to c_4x_1$. Then, by using the original
relations in \eqref{E8.4.1}, one can check that $g$ is of the form
$$x_1\to c_1 x_1, x_2\to c_2 x_2, x_3\to c_3 x_3, x_4\to c_4 x_4$$
where $c_1c_2=c_3^2=c_4^2$ or 
$$x_1\to c_1 x_4, x_2\to c_2 x_2, x_3\to c_3 x_3, x_4\to c_4 x_1$$
where $c_1c_2=c_3^2=c_4^2$. So 
$$\Aut_{gr}(A)\cong \{(c_1,c_2,c_3,c_4)\in (k^\times)^{4}\mid 
c_1c_2=c_3^2=c_4^2\}$$ 
which is completely determined.

Claim 4: $\Aut_{uni}(A)$ is trivial. 
Recall that the discriminant of $A$ over its center is 
$$d:=(x_2^2 x_3^2 (4x_1^2 x_4^2 -x_3^4))^8.$$
By Example \ref{xxex8.4}, the DDS subalgebra ${\mathbb D}(A)$ is the whole
algebra $A$.  The assertion follows 
from Theorem \ref{xxthm0.5}.

Combining all these claims, one sees that $\Aut(A)=\Aut_{gr}(A)$
which is described in Claim 3. 
\end{example}

\begin{remark}
\label{xxrem8.12}
Ideas as in Remark \ref{xxrem8.10} also apply to Example \ref{xxex6.3} 
and a similar conclusion holds. The interested reader can fill out the 
details.
\end{remark}

\subsection*{Acknowledgments} 
The authors would like to thank the referees for their careful 
reading and valuable comments.
A.A. Young was supported by the US
National Science Foundation (NSF Postdoctoral Research Fellowship,
No. DMS-1203744) and J.J. Zhang was supported by the US
National Science Foundation (Nos. DMS-0855743 and DMS-1402863).

\providecommand{\bysame}{\leavevmode\hbox to3em{\hrulefill}\thinspace}
\providecommand{\MR}{\relax\ifhmode\unskip\space\fi MR }
\providecommand{\MRhref}[2]{%

\href{http://www.ams.org/mathscinet-getitem?mr=#1}{#2} }
\providecommand{\href}[2]{#2}

\end{document}